\theoremstyle{plain}
\newtheorem{theorem}{Theorem}[section]
\newtheorem{lemma}[theorem]{Lemma}
\newtheorem{corollary}[theorem]{Corollary}
\newtheorem{proposition}[theorem]{Proposition}
\theoremstyle{definition}
\newtheorem{assumption}[theorem]{Assumption}
\theoremstyle{remark}
\newtheorem{remark}[theorem]{Remark}
\newcommand\sfu{{\sf u}}
\newcommand\sfv{{\sf v}}
\newcommand{\Div}{\operatorname{div}}
\numberwithin{equation}{section}
\newcommand{\bC}{\mathbb{C}}
\newcommand{\bR}{\mathbb{R}}
\newcommand{\bH}{\mathbb{H}}
\newcommand{\bZ}{\mathbb{Z}}
\newcommand{\bS}{\mathbb{S}}
\newcommand\cD{\mathcal{D}}
\newcommand\cH{\mathcal{H}}
\newcommand\cL{\mathcal{L}}
\newcommand\cM{\mathcal{M}}
\newcommand\cP{\mathcal{P}}
\newcommand\cQ{\mathcal{Q}}
\newcommand\cS{\mathcal{S}}
\newcommand\cX{\mathcal{X}}
\providecommand{\set}[1]{\{#1\}}
\def\dashint{\operatorname%
{\,\,\text{\bf-}\kern-.98em\DOTSI\intop\ilimits@\!\!}}
\begin{document}

\title[$L_p$ estimates for elliptic and parabolic equations]{Recent progress in the $L_p$ theory for elliptic and parabolic equations with discontinuous coefficients}

\author[H. Dong]{Hongjie Dong}
\address[H. Dong]{Division of Applied Mathematics, Brown University, 182 George Street, Providence, RI 02912, USA}
\email{Hongjie\_Dong@brown.edu }

\begin{abstract}
In this paper, we review some results over the last 10-15 years on elliptic and parabolic equations with discontinuous coefficients. We begin with an approach given by N. V. Krylov to parabolic equations in the whole space with VMO$_x$ coefficients. We then discuss some subsequent development including elliptic and parabolic equations with coefficients which are allowed to be merely measurable in one or two space directions, weighted $L_p$ estimates with Muckenhoupt ($A_p$) weights, non-local elliptic and parabolic equations, as well as fully nonlinear elliptic and parabolic equations.
\end{abstract}

\maketitle

\tableofcontents

\section{Introduction}

The $L_p$-theory of elliptic and parabolic equations with discontinuous coefficients has been studied extensively for more than fifty years. On one hand, when the dimension $d=2$, it is well known that the $W^2_2$ estimate holds for second-order uniformly elliptic operators with general bounded and measurable coefficients; see, for instance, Bernstein \cite{MR1511579} and Talenti \cite{MR0204845}.
On the other hand, a celebrated counterexample in Talenti \cite{MR0201816} and Maugeri et al. \cite{MPS00} indicates that when $d \ge 3$ in general there is no $W^2_2$ estimate for elliptic operators with bounded measurable coefficients even if they are discontinuous only at a single point. Another example due to Ural'tseva \cite{MR0226179} shows the impossibility of the $W^2_p$ estimate when $d\ge 2$ and $p\neq 2$. See also Meyers \cite{Me} and Krylov \cite{MR2667637}. Note that in Ural'tseva's example, the coefficients are continuous except at a single point when $d=2$ or a line when $d=3$. In \cite{MR1612401}, Nadirashvili showed that the weak uniqueness for martingale problems may fail if coefficients are merely measurable and $d \ge 3$. Recently, in \cite{DK14b} it was proved that when $p\neq 2$, there is no $W^2_p$ estimate for elliptic operators with coefficients piecewise constant on each quadrant in $\bR^2$. For divergence form equations, a similar estimate cannot be expected either due to an example by Piccinini and Spagnolo \cite{MR0361422}.
These examples imply that in general there does not exist a solvability theory for uniformly elliptic operators with bounded and measurable coefficients. Thus in the past many efforts have been made to treat particular types of discontinuous coefficients.

Among others, an important class of discontinuous coefficients is the class
of vanishing mean oscillations (VMO). With VMO leading coefficients,
the $L_p$-solvability theorems of second-order linear equations were established in early 1990s \cite{CFL91,CFL93,BC93,DF96} for both divergence and nondivergence form equations. The main technical tool in these papers is the theory of singular integrals, in particular, the Calder\'on--Zygmund theorem and the Coifman--Rochberg--Weiss commutator theorem. However, this approach usually does not allow measurable coefficients because one needs smoothness of the corresponding fundamental solutions.

In 2005, Krylov  \cite{MR2304157} gave a unified kernel-free approach for both divergence and non-divergence linear elliptic and parabolic equations in the whole space, with coefficients which are in the class of VMO with respect to the space variables and are allowed to be merely measurable in the time variable. His proof relies on mean oscillation estimates, and uses the Hardy--Littlewood maximal function theorem and the Fefferman--Stein sharp function theorem. The results in \cite{MR2304157} were shortly extended in \cite{MR2352490} to the case of mixed-norm $L_p-L_q$ spaces, where the mixed norm is defined as $\|f\|_{q,p}=\|f\|_{L_q^t(L_p^x)}$. See also \cite{Kr02, HP97} for earlier results about the mixed-norm estimates for parabolic equations with coefficients independent of $x$, and the monograph \cite{MR2435520}.
Another approach was given earlier by Caffarelli and Peral \cite{MR1486629}, which is based on a level set argument together with a ``crawling of ink spots'' lemma originally due to Krylov and Safonov \cite{KS80,Sa80} in the proof of the celebrated Krylov--Safonov $C^{\alpha}$ estimate for nondivergence form equations with measurable coefficients. See also  Acerbi and Mingione \cite{AM07} for a maximal function free argument applied to the parabolic $p$-Laplace equation, as well as \cite{Iw83,DM93} for earlier work on the elliptic $p$-Laplace equation by using the sharp and maximal functions.
With these approaches, VMO coefficients are treated in a straightforward manner by perturbation argument. Another
advantage is that these approaches are rather flexible: they can be applied to both divergence and non-divergence or even non-local equations with coefficients which are very
irregular in some of the independent variables. Rather than looking for as weak regularity assumptions as possible on coefficients, we emphasize that one of the main points of these work is to put forth new techniques which turn out to be more powerful than singular integrals in several occasions.

In this paper, we will first give an overview of Krylov's approach to parabolic equations in the whole space with VMO$_x$ coefficients mentioned above in Section \ref{sec2}. Then we will review some subsequent important progress in this direction. In Section \ref{sec3}, we discuss divergence and nondivergence form elliptic and parabolic equations with partially and variably partially VMO coefficients. Section \ref{sec4} is about a generalization of the Fefferman--Stein theorem on sharp functions in spaces of homogeneous type and its applications to weighted $L_p$ estimates with Muckenhoupt ($A_p$) weights. In Section \ref{sec5}, we consider two types of non-local elliptic and parabolic equations: equations with non-local derivatives in the space variables and equations with non-local derivatives in the time variable (the Caputo fractional time derivatives). In the last section, we give some examples of fully nonlinear elliptic and parabolic equations.

To keep this review paper within a reasonable length, we do not make any attempt to cover many other important and relevant theories, for instance, the semigroup approach of the maximal regularity, which is from a functional analytic point of view (see \cite{DHP,Weis01, GV17a, GV17b} and the references therein), and recent results about quasilinear equations.

Throughout the paper, we assume that the coefficients are bounded and measurable. In most cases, the lower-order coefficients can be assumed to be unbounded and in certain Lebesgue spaces. See, for example, \cite{LSU68,LU73} and the recent interesting work \cite{KK19,KT20, Kr20b,KRW20}, and the references therein.

\medskip

\noindent {\bf Basic notation.}
Throughout the paper, we always assume that $1 < p, q < \infty$
unless explicitly specified otherwise.
By $N(d,p,\ldots)$ we mean that $N$ is a constant depending only
on the prescribed quantities $d, p,\ldots$.
For a (scalar, vector-valued, or matrix-valued) function $u(t,x)$
in $\bR^{d+1}$, we set
\begin{equation*}
(u)_{\cD} = \frac{1}{|\cD|} \int_{\cD} u(t,x) \, dx \, dt
= \fint_{\cD} u(t,x) \, dx \, dt,
\end{equation*}
where $\cD$ is an open subset in
$$
\bR^{d+1}=\set{(t,x):t\in \bR, x=(x_1,\ldots,x_d)\in \bR^d}
$$
and $|\cD|$ is the
$d+1$-dimensional Lebesgue measure of $\cD$.
For $-\infty\leq S<T\leq \infty$, we 
write $u(t,x)\in L_{q,p}((S,T)\times \bR^d)$ if
\begin{equation*}
\|u\|_{L_{q,p}((S,T)\times \bR^d)}=\left(\int_S^T\left(\int_{\bR^d}
|u(t,x)|^p \,dx\right)^{q/p}\,dt\right)^{1/q}<\infty.
\end{equation*}
Define
\begin{align*}
W_{q,p}^{1,2}((S,T)\times \bR^d)&=
\set{u:\,u,u_t,Du,D^2u\in L_{q,p}((S,T)\times \bR^d)},\\
\cH^{1}_{q,p}((S,T)\times \bR^d)&=(1-\Delta)^{1/2}W_{q,p}^{1,2}((S,T)\times \bR^d),\\
\bH^{-1}_{q,p}((S,T)\times \bR^d)&=(1-\Delta)^{1/2}L_{q,p}((S,T)\times \bR^d),
\end{align*}
and $W_p^{1,2}=W_{p,p}^{1,2}$, $\cH^{1}_{p}=\cH^{1}_{p,p}$, and $\bH^{-1}_{p}=\bH^{-1}_{p,p}$.
For any function $u$ defined in $\Omega\subset \bR^{d+1}$, we denote its H\"{o}lder semi-norm by
$$
[u]_{C^{\alpha/2,\alpha}(\Omega)}=
\sup\Big\{\frac{|u(t,x)-u(s,y)|}{|t-s|^{\alpha/2}+|x-y|^\alpha}\,:\,
\forall\,(t,x)\neq(s,y)\in\Omega\Big\}.
$$
For any $T\in (-\infty,\infty]$, we denote
\begin{equation*}
\bR_T=(-\infty,T), \quad \bR_T^{d+1}=\bR_T\times \bR^d.
\end{equation*}
For $(t,x)=(t,x',x_d)\in \bR\times \bR^d$, let
$$
B_r(x) = \{ y \in \bR^d: |x-y| < r\}, \quad
Q_r(t,x) = (t-r^2,t) \times B_r(x),
$$
$$
B'_r(x') = \{ y' \in \bR^{d-1}: |x'-y'| < r\}, \quad
Q'_r(t,x') = (t-r^2,t) \times B'_r(x').
$$
We set $B_r = B_r(0)$ and $|B_r|$ to be the $d$-dimensional volume of $B_r$. Similarly, we define $B_r'$, $Q_r$, $|B_r'|$, and $|Q_r|$, etc. We use the notation
$$
D_i u = u_{x_i},\quad
D_{ij}u = u_{x_i x_j},
\quad i,j=1,\ldots,d
$$
and the usual summation convention over repeated indices is assumed throughout the paper.

Additional notation will be introduced in later sections when needed.

\section{Krylov's approach to equations with VMO$_x$ coefficients}\label{sec2}

In \cite{MR2304157}, Krylov considered linear parabolic equations in the whole space with VMO$_x$ coefficients which are in the class of VMO with respect to the space variables and are allowed to be merely measurable in the time variable.
Two types of parabolic operators are considered:
\begin{equation}                            \label{eq5.18}
\begin{aligned}
P u &= -u_t + a^{i j}
D_{ij}u + b^{i}
D_i u + c u,\\
\cP u&=-u_t+D_i(a^{ij}D_j u+\hat b^i
u)+b^{i} D_i u+cu
\end{aligned}
\end{equation}
in $\bR^{d+1}$, where all the coefficients are bounded
$$
|a^{ij}(t,x)| \le K,
\quad
|b^{i}(t,x)| \le K,
\quad
|\hat b^{i}(t,x)| \le K,
\quad
|c(t,x)| \le K,
$$
and the $a^{ij}$ satisfy the ellipticity condition
$$
a^{ij}\xi_{i} \xi_{j}\ge \delta |\xi|^2
$$
for all $(t,x) \in \bR^{d+1}$, $\xi\in \bR^d$, and for some constants $0 < \delta < 1$ and $K>0$. When the coefficients and solution are time-independent, $P$ and $\cP$ are reduced to the elliptic operators
\begin{equation}                            \label{eq5.19}
\begin{aligned}
L u &=a^{i j} D_{ij}u + b^{i} D_i u + c u,\\
\cL u&=D_i(a^{ij}D_j u+\hat b^i u)+b^{i} D_i u+cu.
\end{aligned}
\end{equation}

The following VMO$_x$ condition was introduced in \cite{MR2304157}. Set
\begin{multline*}
\text{osc}_{\texttt{x}}\left(a^{ij},Q_r(t,x)\right)= r^{-2}|B_r|^{-2}\int_{t-r^2}^{t}\int_{y,z \in B_r(x)}
\left| a^{ij}(s,y) -  a^{ij}(s,z)
\right| \, dy \, dz \, ds
\end{multline*}
and
$$
a_{R}^{\#} = \sup_{(t,x) \in \bR^{d+1}} \sup_{r \le R} \sum_{i,j =1}^{d} \text{osc}_{\texttt{x}}\left(a^{ij},Q_r(t,x)\right).
$$

\begin{assumption}                          \label{assumption20071120}
There is an increasing continuous function $\omega(r)$ defined on $[0,\infty)$
such that $\omega(0) = 0$ and $a_R^{\#} \le \omega(R)$.
\end{assumption}
We note that from the proof, it is easily seen that the assumption above can be relaxed: it is sufficient to assume that $a_R^{\#}$ is sufficiently small for all sufficiently small $R$. Here are the main results of \cite{MR2304157}.

\begin{theorem}[Theorem 2.1 of \cite{MR2304157}]                         \label{thm2.1}
Let $p\in (1, \infty)$, $0 < T < \infty$, and the coefficient matrices of
$P$ satisfy Assumption \ref{assumption20071120}. Then for any $f \in L_{p}((0,T) \times
\bR^d)$, there exists a unique $u \in W_{p}^{1,2}((0,T) \times
\bR^d)$ such that
$$
Pu = f \quad \text{in}\  (0,T) \times \bR^d
$$
and
$u(0,\cdot) = 0$. Furthermore, there is a constant $N$,
depending only on $d$, $p$, $\delta$, $K$, $T$, and the
function $\omega$, such that
$$
\|u \|_{W_{p}^{1,2}((0,T) \times \bR^d)}
\le N \| P u \|_{L_{p}((0,T) \times \bR^d)}
$$
for any $u \in W_{p}^{1,2}((0,T) \times \bR^d)$ satisfying
$u(0,\cdot) = 0$.
\end{theorem}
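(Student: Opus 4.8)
The plan is the standard route for such $L_p$ theory: first establish an a priori $W^{1,2}_p$ estimate, then deduce existence by the method of continuity, with uniqueness falling out of the estimate. The engine is a mean oscillation estimate combined with the Fefferman--Stein sharp function theorem and the Hardy--Littlewood maximal function theorem. I would begin with the model operator $P_0 u = -u_t + a^{ij}(t) D_{ij} u$, whose coefficients are merely measurable (and uniformly elliptic) in $t$. Taking the Fourier transform in $x$ turns $P_0 u = f$ into a scalar linear ODE in $t$ for each frequency $\xi$; solving it and using $a^{ij}(t)\xi_i\xi_j \ge \delta|\xi|^2$ gives, by Parseval, the $L_2$ estimate $\|u_t\|_{L_2} + \|D^2 u\|_{L_2} \le N\|f\|_{L_2}$ for $u$ vanishing as $t\to-\infty$.

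Next, since $a^{ij}$ does not depend on $x$, any spatial derivative $D^\beta u$ of a solution of $P_0 u = 0$ again solves the same equation; iterating the $L_2$ estimate against cut-off functions yields local bounds $\|D^\beta(u_t,D^2u)\|_{L_2(Q_{R/2})}\le N R^{-|\beta|}\|(u_t,D^2u)\|_{L_2(Q_R)}$ for all $\beta$, and Sobolev embedding in $x$ upgrades $U:=(u_t,D^2u)$ to a function that is, for a.e.\ $t$, smooth and hence Lipschitz in $x$, while the equation $u_t=a^{ij}(t)D_{ij}u$ carries this over to control of the parabolic oscillation of $U$. Splitting the right-hand side as $g=g\mathbf{1}_{Q_R}$, solving the inhomogeneous piece on all of $\bR^{d+1}$ via the first step, and applying this interior estimate to the homogeneous remainder, I obtain the key decay estimate: for $P_0 u = g$ on $Q_R$ and $r\le R$,
$$
\fint_{Q_r}\bigl|U-(U)_{Q_r}\bigr|\,dz \;\le\; N\Bigl(\tfrac rR\Bigr)^{\gamma}\Bigl(\fint_{Q_R}|U|^2\Bigr)^{1/2} + N\Bigl(\fint_{Q_R}|g|^2\Bigr)^{1/2}
$$
for some $\gamma>0$. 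Feeding the left-hand side into the Fefferman--Stein theorem, handling the $g$-term by Hardy--Littlewood (with a power $s\in(1,p)$ so that $\mathcal{M}(|g|^s)^{1/s}$ is bounded on $L_p$), and absorbing the $(r/R)^\gamma$ term for small $r$, gives $\|u_t\|_{L_p}+\|D^2u\|_{L_p}\le N\|P_0u\|_{L_p}$ on $\bR^{d+1}$; lower-order terms and the passage to a half-space in time are folded in by interpolation and a large-$\lambda$ argument.

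To pass to the VMO$_x$ operator $P$, I would freeze the $x$-dependence at a small scale $r$, replacing $a^{ij}(t,x)$ by its spatial average $\bar a^{ij}(t)=\fint_{B_r}a^{ij}(t,y)\,dy$ and treating $(a^{ij}-\bar a^{ij})D_{ij}u$ as an error term. Assumption~\ref{assumption20071120} makes $\mathrm{osc}_{\texttt{x}}(a^{ij},\cdot)$ small on small cylinders, so via a partition of unity in $x$ and the a priori estimate for $P_0$ applied at scale $r$ (with a little slack from interpolation), the error is absorbed, yielding $\|u\|_{W^{1,2}_p((0,T)\times\bR^d)}\le N(\|Pu\|_{L_p}+\|u\|_{L_p})$ for $u$ with $u(0,\cdot)=0$. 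Because $T<\infty$, the substitution $u=e^{\lambda t}v$ trades $P$ for $P-\lambda$, whose negative zeroth-order coefficient absorbs the lower-order norm once $\lambda$ is large while $e^{\pm\lambda t}$ stays bounded on $(0,T)$, so the $\|u\|_{L_p}$ term is removed. Existence then follows from the method of continuity along $P_\tau=(1-\tau)(-\partial_t+\Delta)+\tau P$ (whose coefficients satisfy the ellipticity and VMO$_x$ hypotheses uniformly in $\tau$), using classical solvability for the heat operator and the uniformity of the a priori bound, and uniqueness is immediate.

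I expect the main obstacle to be the oscillation estimate for the model operator: with $a^{ij}$ only measurable in $t$ there is no time regularity of $u$ to exploit, so the interior estimate must be assembled purely from spatial smoothing plus the equation, and one must be careful that the quantity whose oscillation is estimated is the full vector $(u_t,D^2u)$ rather than $D^2u$ alone. Producing the clean decay with a genuine power gain $(r/R)^\gamma$ under this minimal regularity is the technical core of the argument.
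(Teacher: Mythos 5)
Your overall route matches the paper (Krylov's original proof as sketched in Section~\ref{sec2}): the $L_2$ theory for $a^{ij}=a^{ij}(t)$ via Fourier in $x$, interior Caccioppoli-type estimates using that spatial derivatives of solutions again solve the equation, a mean oscillation/decay estimate for the Hessian, Fefferman--Stein and Hardy--Littlewood, freezing the $x$-coefficients with VMO absorbing the error, method of continuity, and $u=e^{\lambda t}v$ to dispose of the lower-order terms on a bounded time interval. That skeleton is correct.

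There are, however, two places where you are skipping over the genuine technical content.

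First, your derivation only gives the a priori estimate for $p>2$. The decay estimate you write has $L_2$ averages on the right because the model-operator theory is $L_2$-based, so the resulting pointwise sharp-function bound is of the form
$U^{\#}\lesssim \mu^\gamma\big(\cM(|U|^{2})\big)^{1/2}+\mu^{-1-d/2}\big(\cM(|g|^{2})\big)^{1/2}$,
and the Hardy--Littlewood step then requires $p/2>1$, i.e.\ $p>2$. Your parenthetical ``with a power $s\in(1,p)$ so that $\cM(|g|^s)^{1/s}$ is bounded on $L_p$'' does not fix this: H\"older gives $(\fint|g|^s)^{1/s}\le(\fint|g|^2)^{1/2}$ in the wrong direction, so you cannot silently lower the exponent in the decay estimate from $2$ to $s<2$. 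The paper closes this gap by a duality argument for $p\in(1,2)$ in the case $a^{ij}=a^{ij}(t)$ (available precisely because the coefficients do not depend on $x$, so the adjoint has the same form), and only then runs the perturbation. You need this step.

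Second, and related, once the $L_p$ solvability for all $p$ is in hand for $a^{ij}=a^{ij}(t)$, the paper bootstraps the mean oscillation estimate to one with $L_{p_0}$-averages for any $p_0\in(1,\infty)$ (the analogue of \eqref{eq4.41b}). This is not optional in the freezing step: to absorb the error $(a^{ij}-\bar a^{ij}(t))D_{ij}u$ via H\"older one needs $p_0\nu_1<p$ for some $\nu_1>1$, so that the VMO$_x$ smallness can be peeled off as a $(\omega(R))^{1/(p_0\nu_2)}$ factor; your phrase ``a little slack from interpolation'' gestures at this but does not supply the $L_{p_0}$-version of the oscillation estimate that actually makes the H\"older split work. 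Finally, a minor point: the quantity whose mean oscillation is estimated in the paper is $U=|D^2u|+\lambda^{1/2}|Du|+\lambda|u|$, not the pair $(u_t,D^2u)$; the bound on $u_t$ is recovered afterwards from the equation.
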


\begin{theorem}[Theorem 2.4 of \cite{MR2304157}]
                                                \label{thm2.2}
Let $p\in (1,\infty)$, $T\in (0,\infty)$, and the coefficient
matrices of $\cP$ satisfy Assumption \ref{assumption20071120}.
Then for any $f, g_i \in L_{p}((0,T)\times \bR^d)$,
there exists a unique $u \in \cH_{p}^{1}((0,T) \times \bR^d)$
such that
$$
\cP u =  f+ D_i g_i\quad \text{in}\  (0,T) \times
\bR^d
$$
and $u(0,\cdot) = 0$. Furthermore, there is a constant $N$,
depending only on $d$, $p$, $\delta$, $K$, $T$, and the
function $\omega$,  such that
$$
\| u \|_{\cH_{p}^{1}((0,T) \times \bR^d)}
\le N (\| f \|_{L_{p}((0,T) \times \bR^d)}
+\| g \|_{L_{p}((0,T) \times
\bR^d)}).
$$
\end{theorem}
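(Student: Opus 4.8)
The plan is to reproduce, in the divergence setting, the mean oscillation argument behind Theorem~\ref{thm2.1}, now estimating $Du$ in $L_p$ in place of $D^2u$ and reading off $u_t\in\bH^{-1}_p$ from the equation. A naive duality from Theorem~\ref{thm2.1} does not work here, since the formal adjoint of $\cP$ is again a divergence-form operator (with $a^{ij}$ transposed), not a nondivergence one. By the method of continuity it is enough to prove the a priori bound
\[
\|u\|_{\cH^1_p((0,T)\times\bR^d)}\le N\bigl(\|f\|_{L_p}+\|g\|_{L_p}\bigr)
\]
for $u\in\cH^1_p$ with $u(0,\cdot)=0$ solving $\cP u=f+D_ig_i$: existence then follows by joining $\cP$ to the heat operator $-\partial_t+\Delta$, whose $\cH^1_p$-solvability is classical (Fourier multipliers), since every operator in the connecting family satisfies the same structural constants. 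Uniqueness is this estimate applied to the difference of two solutions.

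The first reduction removes the lower-order terms. Suppose the estimate is known for $\cP_0u:=-u_t+D_i(a^{ij}D_ju)$. Then $D_i(\hat b^iu)$, $b^iD_iu$, $cu$ are moved to the right-hand side, $\hat b^iu$ joining the $g_i$ and $b^iD_iu+cu$ joining $f$, with $\|\hat b^iu\|_{L_p}+\|b^iD_iu+cu\|_{L_p}\le N(\|Du\|_{L_p}+\|u\|_{L_p})$. Working first on a short interval $(0,\tau)$, using $u(0,\cdot)=0$ and an interpolation inequality on the slab to absorb a small multiple of $\|Du\|_{L_p}$ together with $\|u\|_{L_p}$, and then iterating over a partition of $(0,T)$, one reduces everything to the estimate for $\cP_0u=f+D_ig_i$.

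The core is the sharp-function estimate for $Du$. Fix $Q_r=Q_r(t_0,x_0)$, put $\bar a^{ij}(t)=\fint_{B_r(x_0)}a^{ij}(t,y)\,dy$, and on $Q_{r/2}$ write $u=w+v$, where $w$ solves the model equation $-w_t+D_i(\bar a^{ij}(t)D_jw)=0$ in $Q_{r/2}$ with $w=u$ on the parabolic boundary. Since $\bar a^{ij}$ is $x$-independent, each $D_kw$ solves the same uniformly parabolic divergence equation, so $w$ is smooth in $x$ and the De Giorgi--Nash--Moser and energy estimates, applied to $Dw$, yield the interior oscillation decay
\[
\fint_{Q_\rho}\bigl|Dw-(Dw)_{Q_\rho}\bigr|\le N(\rho/r)^{\alpha}\fint_{Q_{r/2}}|Dw|\qquad(\rho\le r/4).
\]
For $v=u-w$, which vanishes on the parabolic boundary of $Q_{r/2}$, the energy inequality for the divergence equation together with the Poincar\'e inequality gives
\[
\Bigl(\fint_{Q_{r/2}}|Dv|^2\Bigr)^{1/2}\le N\Bigl(\fint_{Q_r}\bigl(|g|^2+r^2|f|^2\bigr)\Bigr)^{1/2}+N\Bigl(\fint_{Q_r}\bigl|(a^{ij}-\bar a^{ij})D_ju\bigr|^2\Bigr)^{1/2},
\]
and the last term is bounded, by H\"older's inequality with a slightly larger exponent and the definition of $a_R^{\#}$, by $N\,\omega(r)^{\theta}\bigl(\fint_{Q_r}|Du|^{s}\bigr)^{1/s}$ for some $s\in(1,p)$ and $\theta\in(0,1)$. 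Combining these, choosing $\rho=\kappa r$ and passing to the parabolic maximal function $\cM$, one obtains a pointwise bound for $(Du)^{\#}$ by a sum of $N\kappa^{\alpha}(\cM(|Du|^{s}))^{1/s}$, $N\kappa^{-(d+2)/s}(\cM(|g|^2+|f|^2))^{1/2}$, and $N\kappa^{-(d+2)/s}\omega(R)^{\theta}(\cM(|Du|^{s}))^{1/s}$, where $R$ bounds the radii used. Applying the Fefferman--Stein sharp function theorem and then the Hardy--Littlewood theorem in $L_{p/s}$ (valid since $p/s>1$), choosing $\kappa$ small, rescaling so that only small $R$ (hence small $\omega(R)$) occur, and absorbing, we get $\|Du\|_{L_p}\le N(\|f\|_{L_p}+\|g\|_{L_p})$; then $u_t=D_i(a^{ij}D_ju+\hat b^iu)+b^iD_iu+cu-f-D_ig_i$ yields the $\bH^{-1}_p$ bound for $u_t$ and hence the $\cH^1_p$ estimate.

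I expect the main obstacle to be precisely this core step: establishing the interior oscillation decay for $Dw$ with a constant independent of the merely-measurable-in-$t$ coefficients $\bar a^{ij}(t)$, and, in tandem, arranging the bookkeeping so that the VMO$_x$ error enters as $\omega(R)^{\theta}$ times a maximal function of $|Du|^{s}$ with $s$ \emph{strictly} below $p$, so that after Fefferman--Stein and Hardy--Littlewood it is a genuinely small multiple of $\|Du\|_{L_p}$ and can be absorbed. The rest --- the lower-order reduction, the method of continuity, uniqueness, and recovering $u_t$ --- is routine.
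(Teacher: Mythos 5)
Your core mechanism --- the split $u=w+v$ on $Q_{r/2}$ with $w$ solving the frozen model problem $-w_t+D_i(\bar a^{ij}(t)D_jw)=0$, oscillation decay for $Dw$, energy bound for $Dv$, then Fefferman--Stein and absorption --- is essentially the paper's route. Your use of De Giorgi--Nash--Moser for the oscillation decay of $Dw$ is a legitimate alternative to the paper's more self-contained argument (iterated Caccioppoli plus Sobolev embedding as in Lemma~\ref{lem2.1}, which applies verbatim to the divergence model problem since $D_i(\bar a^{ij}(t)D_jw)=\bar a^{ij}(t)D_{ij}w$ when the coefficients are $x$-independent).

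There is, however, a genuine gap: as written, your argument only covers $p>2$. The energy inequality for $v$ is intrinsically an $L_2$ estimate, so the resulting sharp-function bound for $Du$ has on its right-hand side $(\cM(|Du|^{s}))^{1/s}$ with $s\ge 2$ and $(\cM(|g|^2+|f|^2))^{1/2}$. To close via Hardy--Littlewood and Fefferman--Stein one needs $p/s>1$ and $p/2>1$, forcing $p>2$. The same obstruction infects the freezing step for $p$ near $1$: to absorb $\omega(R)^{\theta}(\cM(|Du|^{p_0\nu_1}))^{1/(p_0\nu_1)}$ with $p_0\nu_1<p$, one first needs the mean-oscillation estimate at integrability level $p_0<p$, as in the paper's display~\eqref{eq4.41b}, and that bootstrap rests on having the $W^1_p$ theory for the $x$-independent model problem for \emph{all} $p\in(1,\infty)$, not just $p>2$.

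The remedy is precisely the duality you dismiss in your opening paragraph. You are right that one cannot pass from Theorem~\ref{thm2.1} (nondivergence) to Theorem~\ref{thm2.2} by duality, since the adjoint of a divergence operator is again in divergence form. But that observation is exactly why duality \emph{does} work internally here: the formal adjoint (after time reversal) of $-u_t+D_i(\bar a^{ij}(t)D_ju)$ is a uniformly parabolic divergence-form operator with measurable, $x$-independent coefficients $\bar a^{ji}$, to which your $p>2$ estimate applies. Testing against solutions of the adjoint problem then yields the $\cH^1_p$ estimate for $p\in(1,2)$ when $a^{ij}=a^{ij}(t)$. With the model problem settled for all $p\in(1,\infty)$, you can run the bootstrap to obtain the mean-oscillation estimate at any level $p_0\in(1,\infty)$ and then carry out the freezing argument for all $p$. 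Without this step your proof does not cover $p\le 2$, and the paper's own proof of the analogous nondivergence result spends a dedicated step (Step~2) on exactly this duality.
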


Theorems \ref{thm2.1} and \ref{thm2.2} are derived from the following propositions by using the standard method of continuity and considering $v:=e^{-\lambda u}$.

\begin{proposition}
                            \label{prop2.4}
Let $p\in (1,\infty)$, $T\in (-\infty,\infty]$, and the coefficient
matrices of $P$ satisfy Assumption \ref{assumption20071120}.
Then there exists $\lambda_0\ge 0$
depending only on $d$, $p$, $\delta$, $K$, and the
function $\omega$, and $N>0$ depending only on $d$, $p$, $\delta$, and $K$, such that
for any $\lambda\ge \lambda_0$ and $u\in W^{1,2}_p(\bR^{d+1}_T)$,
\begin{align}
                    \label{eq4.38}
&\|u_t \|_{L_{p}(\bR^{d+1}_T)}
+\|D^2 u \|_{L_{p}(\bR^{d+1}_T)}
+\lambda^{1/2} \|D u \|_{L_{p}(\bR^{d+1}_T)}
+\lambda \|u \|_{L_{p}(\bR^{d+1}_T)}\notag\\
&\le N \| P u-\lambda u \|_{L_{p}(\bR^{d+1}_T)}.
\end{align}
\end{proposition}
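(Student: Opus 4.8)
\emph{Overview and reductions.} The plan is to reduce the a priori estimate to a mean‑oscillation bound for solutions and then turn that bound into the $L_p$ estimate by the Fefferman--Stein sharp function theorem combined with the Hardy--Littlewood maximal function theorem. By density I may assume $u\in C_0^\infty$. The lower‑order terms are disposed of first: since $\|-u_t+a^{ij}D_{ij}u-\lambda u\|_{L_p}\le\|Pu-\lambda u\|_{L_p}+K\|Du\|_{L_p}+K\|u\|_{L_p}$ and $K\|Du\|_{L_p}+K\|u\|_{L_p}\le\frac12(\lambda^{1/2}\|Du\|_{L_p}+\lambda\|u\|_{L_p})$ once $\lambda\ge N(K)$, it suffices to treat $P_0u:=-u_t+a^{ij}D_{ij}u$. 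The parameter $\lambda$ I would then remove by the standard device of passing to $v(t,x,x_{d+1})=u(t,x)\cos(\sqrt\lambda\,x_{d+1})$, which solves a zeroth‑order‑free parabolic equation in one extra space variable with coefficient matrix $\operatorname{diag}(a^{ij},1)$ still satisfying Assumption \ref{assumption20071120} with the same $\omega$; on $\bR^{d+1}\times\mathbb T$ with $\mathbb T$ a circle of length $\sim\lambda^{-1/2}$ the four left‑hand quantities become, up to fixed constants, the single quantity $\|v_t\|_{L_p}+\|D^2v\|_{L_p}$, while only cylinders of radius $\lesssim\lambda^{-1/2}$ enter the analysis on such a thin torus. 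So it is enough to prove $\|v_t\|_{L_p}+\|D^2v\|_{L_p}\le N\|\tilde P_0v\|_{L_p}$ for the lifted operator using only cylinders of radius $\le R_0$, where $N=N(d,p,\delta,K)$ and $R_0$ is chosen small depending on $\omega$ --- this is both the source of $\lambda_0$ and the reason $N$ may be taken independent of $\omega$.

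\emph{The frozen operator and the mean‑oscillation estimate.} Fixing a cylinder $Q_R$ with $R\le R_0$, I would freeze the spatial dependence by replacing the coefficients with their spatial average $\bar a^{ij}(t)$ over the ball of $Q_R$, and study $w$ solving $-w_t+\bar a^{ij}(t)D_{ij}w=0$ with $w=u$ on $\partial_pQ_R$. Because $\bar a$ depends only on $t$, every spatial derivative of $w$ solves the same equation, so by the $L_2$ energy estimate and local boundedness $D^2w$ is smooth in $x$ and --- rewriting $\partial_t(D^2w)=\bar a^{ij}D_{ij}(D^2w)$ to control its time derivative by $D^4w$ --- Lipschitz in $t$ on $Q_{R/2}$; this gives $\fint_{Q_r}|D^2w-(D^2w)_{Q_r}|\le N(r/R)\fint_{Q_R}|D^2w|$ for $r\le R/2$, and for $w_t=\bar a^{ij}D_{ij}w$ the analogous bound in which only the \emph{spatial} oscillation appears (the subtracted term being a spatial average, not a space--time average), since $\bar a$ is merely measurable in $t$. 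Setting $h=u-w$, I note $h$ solves the frozen equation with right‑hand side $f+(\bar a^{ij}-a^{ij})D_{ij}u$ and zero parabolic data, so the $L_2$ energy estimate yields $\|D^2h\|_{L_2(Q_R)}\le N\|f\|_{L_2(Q_R)}+N\|(\bar a-a)D^2u\|_{L_2(Q_R)}$, and since $|\bar a-a|\le2K$ with spatial average $\le\omega(R)$, H\"older's inequality bounds the last term by $N\omega(R)^{\theta}(\fint_{Q_R}|D^2u|^2)^{1/2}|Q_R|^{1/2}$ for some $\theta>0$. Combining the decay estimate for $w$ with the splitting $u=w+h$, I obtain for $r=\kappa R$
$$
\fint_{Q_{\kappa R}}\bigl|D^2u-(D^2u)_{Q_{\kappa R}}\bigr|\ \le\ N\bigl(\kappa+\kappa^{-(d+2)}\omega(R_0)^{\theta}\bigr)\Bigl(\fint_{Q_R}|D^2u|^2\Bigr)^{1/2}+N\kappa^{-(d+2)}\Bigl(\fint_{Q_R}|f|^2\Bigr)^{1/2},
$$
and, writing $u_t=a^{ij}D_{ij}u-f$ and using the spatial‑oscillation version, the same bound for the mean oscillation of $u_t$.

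\emph{Passage to $L_p$.} Taking the supremum over cylinders of radius $\le R_0$ containing a point $X$ converts the last display into pointwise estimates for the truncated sharp functions of $D^2u$ and $u_t$ by $N(\kappa+\kappa^{-(d+2)}\omega(R_0)^{\theta})[\mathcal{M}(|D^2u|^2)(X)]^{1/2}+N\kappa^{-(d+2)}[\mathcal{M}(|f|^2)(X)]^{1/2}$. I then apply the Fefferman--Stein theorem (legitimate since $D^2u,u_t\in L_p$ by hypothesis) followed by the Hardy--Littlewood maximal theorem in $L_{p/2}$; choosing first $\kappa$ small and then $R_0$ small --- that is, $\lambda$ large --- makes the coefficient of $\|D^2u\|_{L_p}+\|u_t\|_{L_p}$ on the right at most $1/2$, which I absorb to get $\|v_t\|_{L_p}+\|D^2v\|_{L_p}\le N\|f\|_{L_p}$ with $N=N(d,p,\delta,K)$. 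Undoing the reduction of the first step restores the full left‑hand side with its $\lambda^{1/2}$ and $\lambda$ weights.

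\emph{Where the difficulty lies.} The main obstacle is the interior estimate for the frozen operator under coefficients merely measurable in $t$: one cannot hope for any oscillation decay of $u_t$ in the time direction, so the scheme must be arranged so that only the \emph{spatial} oscillation of $u_t$ (together with the full space--time mean oscillation of $D^2u$) is ever invoked --- this is exactly where the maximal/sharp‑function method prevails over a singular‑integral approach, which would need smoothness of the fundamental solution. A secondary point is that the comparison step naturally produces the $L_2$‑average of $|D^2u|$, so the argument as sketched gives the estimate for $p\ge2$; the range $p\in(1,2)$ would be recovered by duality (the formal adjoint being a divergence‑form operator of the same type) or by first establishing the frozen‑coefficient estimate in $L_\mu$ for all $\mu\in(1,\infty)$ and running the comparison at an exponent below $p$.
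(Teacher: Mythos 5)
Your proposal follows the same core route as the paper's: freeze $a^{ij}$ in $x$ over each cylinder, establish interior regularity for the frozen equation with $a^{ij}(t)$ (your ``Lipschitz in $x$, $C^{1/2}$ in $t$'' estimate is the paper's Lemma \ref{lem2.1}), derive a mean-oscillation estimate for $D^2u$, and pass to $L_p$ via Fefferman--Stein and Hardy--Littlewood. The Agmon-type lift to $\bR^{d+1}\times\mathbb T$ with a thin torus to absorb the $\lambda$-terms is a genuine variation: the paper instead carries $\lambda$ inside the quantity $U=|D^2u|+\lambda^{1/2}|Du|+\lambda|u|$ of Lemma \ref{lem2.3}. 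Your device is legitimate, but it calls for a sharp-function theorem on $\bR^{d+1}\times\mathbb T$ and a separate treatment of parabolic radii larger than the torus period (the paper handles large radii by assuming small support in $t$ and a partition of unity), so it is not quite the free simplification it may appear. Also, the sharp-function estimate for $u_t$ is unnecessary and subtle (since $\bar a(t)$ is rough, $u_t$ has no decay in $t$ and one would need a spatial-only sharp-function variant); the paper just recovers $u_t$ from the equation once $D^2u$, $Du$, $u$ are controlled.

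The concrete error is the comparison step. With $|\bar a - a|\le 2K$ pointwise and $\fint_{Q_R}|\bar a-a|\le\omega(R)$, H\"older with conjugate exponents $\nu_1,\nu_2>1$ gives
\begin{equation*}
\Big(\fint_{Q_R}|(\bar a-a)D^2u|^2\Big)^{1/2}\le N\,\omega(R)^{1/(2\nu_2)}\Big(\fint_{Q_R}|D^2u|^{2\nu_1}\Big)^{1/(2\nu_1)},
\end{equation*}
and there is no way to keep the $L_2$ average on the right while extracting a power of $\omega(R)$, as your display does. Consequently the pointwise sharp-function estimate carries $\cM^{1/(2\nu_1)}(|D^2u|^{2\nu_1})$, and Hardy--Littlewood then requires $p>2\nu_1>2$; the direct argument does not reach $p\in(1,2]$. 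You acknowledge this at the end, but your first suggested fix via duality is misleading for general $a^{ij}(t,x)$: the formal adjoint of $P$ is a doubly-divergence operator whose $L_{p'}$ theory is a separate result, not ``the same type''. The paper invokes duality only for the frozen operator $a^{ij}(t)$, where the adjoint is again a nondivergence operator with $x$-independent coefficients. Your second suggested fix --- first establishing the frozen-coefficient $L_\mu$ estimate and solvability for all $\mu\in(1,\infty)$ (Steps 1--2 of the paper) and then bootstrapping the mean-oscillation estimate \eqref{eq4.41b} to an arbitrary small exponent $p_0$ (Step 3) before running the comparison (Step 4) with $p_0\nu_1<p$ --- is exactly the paper's route, and it is the nontrivial work you have compressed into a single clause.
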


\begin{proposition}
Let $p\in (1,\infty)$, $T\in (-\infty,\infty]$, and the coefficient
matrices of $\cP$ satisfy Assumption \ref{assumption20071120}.
Then there exists $\lambda_0\ge 0$
depending only on $d$, $p$, $\delta$, $K$, and the
function $\omega$, and $N>0$ depending only on $d$, $p$, $\delta$, and $K$, such that
for any $\lambda\ge \lambda_0$ and $u\in \cH^{1}_p(\bR^{d+1}_T)$ satisfying
$$
\cP u-\lambda u=f+ D_i g_i\quad \text{in}\  \bR^{d+1}_T,
$$
we have
$$
\| Du \|_{L_{p}(\bR^{d+1}_T)}+\lambda^{1/2} \| u \|_{L_{p}(\bR^{d+1}_T)}
\le N (\lambda^{-1/2}\| f \|_{L_{p}(\bR^{d+1}_T)}
+\| g \|_{L_{p}(\bR^{d+1}_T)}).
$$
\end{proposition}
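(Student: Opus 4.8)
The plan is to mirror the proof of Proposition~\ref{prop2.4}. The first step is to pass to the principal operator $\cP_0u=-u_t+D_i(a^{ij}D_ju)$ by moving the lower-order terms into the data: the equation becomes $\cP_0u-\lambda u=\tilde f+D_i\tilde g_i$ with $\tilde f=f-b^iD_iu-cu$ and $\tilde g_i=g_i-\hat b^iu$, and $\lambda^{-1/2}\|\tilde f\|_{L_p}+\|\tilde g\|_{L_p}\le\lambda^{-1/2}\|f\|_{L_p}+\|g\|_{L_p}+N\lambda^{-1/2}\|Du\|_{L_p}+N(1+\lambda^{-1/2})\|u\|_{L_p}$, so the extra terms are absorbed once the estimate is proved with the full quantity $\|Du\|_{L_p}+\lambda^{1/2}\|u\|_{L_p}$ on the left and $\lambda\ge\lambda_0$ is large. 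It therefore suffices to show, for $\cP_0u-\lambda u=f+D_ig_i$ in $\bR^{d+1}_T$, that $\|Du\|_{L_p}+\lambda^{1/2}\|u\|_{L_p}\le N(\lambda^{-1/2}\|f\|_{L_p}+\|g\|_{L_p})$ for $\lambda\ge\lambda_0$. I would prove this first for $p\in[2,\infty)$ by the mean-oscillation/sharp-function method, exactly as in the non-divergence case, and then pass to $p\in(1,2)$ by duality.

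For $p\ge2$ one starts with the model case $a^{ij}=\bar a^{ij}(t)$: the $L_2$ bound is the usual energy estimate (test with $u$; use ellipticity and $\lambda\ge0$), and for $p>2$ one exploits that the coefficients are independent of $x$, so each spatial derivative $D_kw$ of a solution of $-w_t+D_i(\bar a^{ij}(t)D_jw)-\lambda w=0$ again solves an equation of the same form; the De Giorgi--Nash--Moser theory then gives local boundedness and H\"older continuity of $D_kw$, hence the decay estimate $\fint_{Q_\rho}|Dw-(Dw)_{Q_\rho}|\le N(\rho/r)^{\alpha}\bigl(\fint_{Q_r}|Dw|^{p_0}\bigr)^{1/p_0}$ for $\rho\le r/2$ and a fixed $p_0\in(1,p)$, with constants independent of $\lambda$ after the parabolic rescaling $(t,x)\mapsto(\lambda^{-1}t,\lambda^{-1/2}x)$. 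For general coefficients satisfying Assumption~\ref{assumption20071120}, on a cylinder $Q_{\kappa r}(t_0,x_0)$ one replaces $a^{ij}$ by $\bar a^{ij}(s)=\fint_{B_{\kappa r}(x_0)}a^{ij}(s,y)\,dy$ and writes $u=w+v$, where $w$ solves the model equation with right-hand side equal to $f+D_i\bigl(g_i+(\bar a^{ij}-a^{ij})D_ju\bigr)$ restricted to $Q_{\kappa r}$ and $v$ has right-hand side supported in $Q_{\kappa r}^c$; applying the decay estimate to $Dv$, the model $L_{p_0}$ estimate to $w$, and the bound $0\le|\bar a^{ij}-a^{ij}|\le 2K$ together with its small cylindrical average (controlled by $a_R^{\#}$), H\"older's inequality gives $\bigl(\fint_{Q_{\kappa r}}|(\bar a^{ij}-a^{ij})D_ju|^{p_0}\bigr)^{1/p_0}\le N\,\omega(R)^{\gamma}\bigl(\fint_{Q_{\kappa r}}|Du|^{\ell}\bigr)^{1/\ell}$ for some $\ell\in(p_0,p)$ and $\gamma>0$, and hence a pointwise bound for the sharp functions of $Du$ and of $\lambda^{1/2}u$ on small cylinders; on larger cylinders $Q_\rho$ one uses instead a Caccioppoli estimate, whose gain of a factor $(\rho\sqrt\lambda)^{-1}$ makes its contribution a small multiple of the left-hand side when $\lambda\ge\lambda_0$. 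Taking $L_p$ norms (here one uses the boundedness of the Hardy--Littlewood maximal operator on $L_{p/2}$, $p\ge2$) and applying the Fefferman--Stein theorem yields $\|Du\|_{L_p}+\lambda^{1/2}\|u\|_{L_p}\le N\bigl(\kappa^{-\alpha}+\omega(R)^{\gamma}+(\lambda_0^{1/2}R)^{-1}\bigr)\bigl(\|Du\|_{L_p}+\lambda^{1/2}\|u\|_{L_p}\bigr)+N\bigl(\lambda^{-1/2}\|f\|_{L_p}+\|g\|_{L_p}\bigr)$; choosing $\kappa$ large, then $R$ small, then $\lambda_0$ large absorbs the first term, which is legitimate since $u\in\cH^1_p$ makes the left-hand side finite.

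For $p\in(1,2)$ one argues by duality: the formal ($L_2$, time-reversed) adjoint $\cP_0^{*}$ of $\cP_0-\lambda$ is a divergence operator of the same type, so the estimate just proved applies to $\cP_0^{*}$ with exponent $p'>2$; pairing $u$ against test functions $\psi\in L_{p'}$ (or $D_i\phi_i$ with $\phi\in L_{p'}$, for the gradient part), solving the adjoint equation, and using the already-established bounds for $h$ and $Dh$ together with the pairing identity $\int u\psi=\int fh-\int g_iD_ih$, one obtains the full estimate for $p<2$. The original estimate for $\cP$ then follows from the reduction in the first paragraph. I expect the main difficulty to lie in the $p\ge2$ step: unlike the non-divergence case there is no pointwise control of second derivatives, so the oscillation decay of $Du$ for the model equation must be extracted from the $L_2$ theory via the De Giorgi--Nash--Moser regularity of $Dw$, and the $\lambda$-bookkeeping---the parabolic scaling, the matching of the small-scale freezing estimate with the large-scale Caccioppoli estimate, and the localization cutoffs---has to be carried out carefully so that no uncontrolled term survives the absorption.
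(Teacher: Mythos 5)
Your overall architecture is right and matches Krylov's method, which the paper sketches only in the nondivergence case and leaves the divergence case to the cited source: absorb lower-order terms for $\lambda\ge\lambda_0$, prove a mean-oscillation estimate for $Du$ and $\lambda^{1/2}u$ in the model case $a^{ij}=a^{ij}(t)$, freeze coefficients for general $a^{ij}$, close with Fefferman--Stein and Hardy--Littlewood for $p>2$, and use duality for $p<2$ (with $p=2$ coming directly from the energy estimate). Two points deserve comment. First, invoking De~Giorgi--Nash--Moser to get the oscillation decay of $Dw$ in the model case is correct but unnecessarily heavy: when $a^{ij}=a^{ij}(t)$ the divergence and nondivergence operators coincide, so the argument of Lemma~\ref{lem2.1} applies verbatim and yields $C^\infty_x$ interior regularity of $w$ (hence Lipschitz-in-$x$ decay of $Dw$ with exponent $\alpha=1$) by iterated Caccioppoli plus Sobolev embedding; the ``main difficulty'' you flag at the end is therefore not there. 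Second, your statement about boundedness of the Hardy--Littlewood maximal function on $L_{p/2}$ for $p\ge2$ is imprecise: at $p=2$ this fails, and more generally the mean-oscillation argument needs $p_0,\nu_1>1$ with $p_0\nu_1<p$ (as in Step~4 of the paper's sketch for the nondivergence case), which is available for every $p>1$ if $p_0$ and $\nu_1$ are taken close to $1$; $p=2$ should then be covered by the direct energy estimate rather than by the maximal-function route. With those two adjustments the proposal carries through and agrees in substance with the approach the paper attributes to \cite{MR2304157}.
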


Here we only sketch the proof of Proposition \ref{prop2.4} following the idea in \cite{MR2304157} but with a slightly simplified argument used in \cite{MR2771670}. The starting point is the following $L_2$ estimate.

\begin{lemma}
                                \label{lem1.1}
Let $T\in (-\infty,+\infty]$, $a^{ij}=a^{ij}(t)$, and $\lambda\ge 0$. Suppose that $u\in W^{1,2}_2(\bR^{d+1}_T)$.
Then we have
\begin{align*}
&\| u_t\|_{L_2(\mathbb{R}^{d+1}_T)}+
\|D^2 u\|_{L_2(\mathbb{R}^{d+1}_T)}
+\sqrt{\lambda}\|Du\|_{L_2(\mathbb{R}^{d+1}_T)}
+\lambda\|u\|_{L_2(\mathbb{R}^{d+1}_T)}\\
&\leq N(d,\delta)\|Pu-\lambda u\|_{L_2(R^{d+1}_T)}.
\end{align*}
\end{lemma}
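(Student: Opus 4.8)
The plan is to take the Fourier transform in the space variable $x$, which turns the equation into a one–parameter family of linear first–order ODEs in $t$ and reduces the whole estimate to Plancherel's theorem combined with Young's convolution inequality. It suffices to treat the principal part $Pu=-u_t+a^{ij}(t)D_{ij}u$: once this case is settled, the terms $b^iD_iu$ and $cu$ are moved to the right–hand side and, being bounded, absorbed into $\lambda^{1/2}\|Du\|_{L_2}$ and $\lambda\|u\|_{L_2}$ in the range of $\lambda$ relevant to Proposition \ref{prop2.4}. So let $f=Pu-\lambda u$ and write $\hat u(t,\xi)$ for the $x$–Fourier transform of $u(t,\cdot)$. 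Then for a.e.\ $\xi\in\bR^d$,
\[
\hat u_t(t,\xi)+\mu(t,\xi)\hat u(t,\xi)=-\hat f(t,\xi),\qquad t<T,
\]
where $\mu(t,\xi)=a^{ij}(t)\xi_i\xi_j+\lambda$ satisfies, by ellipticity and boundedness of $a^{ij}$, the two–sided bound $\delta|\xi|^2+\lambda\le\mu(t,\xi)\le N(d,\delta)(|\xi|^2+\lambda)$.

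Since $u\in W^{1,2}_2(\bR^{d+1}_T)$, for a.e.\ $\xi$ both $\hat u(\cdot,\xi)$ and $\hat u_t(\cdot,\xi)$ lie in $L_2((-\infty,T))$, so $\hat u(\cdot,\xi)$ is absolutely continuous and there is a sequence $t_n\to-\infty$ along which $\hat u(t_n,\xi)\to0$. Integrating the ODE against the standard integrating factor over $(t_n,t)$ and letting $n\to\infty$ — the resulting boundary term at $t_n$ is in absolute value at most $|\hat u(t_n,\xi)|\to0$ — one obtains the Duhamel representation
\[
\hat u(t,\xi)=-\int_{-\infty}^{t}\exp\!\Big(-\!\int_s^t\mu(\tau,\xi)\,d\tau\Big)\hat f(s,\xi)\,ds ,
\]
valid for every $\xi$ with $\delta|\xi|^2+\lambda>0$. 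The complementary set (only $\xi=0$, and only when $\lambda=0$) has measure zero, hence is irrelevant to $\|D^2u\|_{L_2}$, $\lambda^{1/2}\|Du\|_{L_2}$, $\lambda\|u\|_{L_2}$, while there $\hat u_t=-\hat f$ identically.

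To estimate the three second– and zeroth–order terms, bound $\mu(\tau,\xi)\ge\delta|\xi|^2+\lambda=:m(\xi)$ in the exponent of the representation, so that, with $c(\xi)$ denoting $|\xi|^2$, $\lambda^{1/2}|\xi|$, or $\lambda$,
\[
c(\xi)\,|\hat u(t,\xi)|\le c(\xi)\int_{-\infty}^{t}e^{-m(\xi)(t-s)}|\hat f(s,\xi)|\,ds .
\]
The right–hand side is the convolution of $|\hat f(\cdot,\xi)|$ with the kernel $c(\xi)e^{-m(\xi)\,\cdot}\mathbf 1_{(0,\infty)}$, whose $L_1$ norm equals $c(\xi)/m(\xi)\le N(\delta)$ by the arithmetic–geometric mean inequality. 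Hence Young's inequality gives $\|c(\xi)\hat u(\cdot,\xi)\|_{L_2}\le N(\delta)\|\hat f(\cdot,\xi)\|_{L_2}$ uniformly in $\xi$; squaring, integrating in $\xi$, and applying Plancherel yields the asserted bound on $\|D^2u\|_{L_2}+\lambda^{1/2}\|Du\|_{L_2}+\lambda\|u\|_{L_2}$. The time derivative is then controlled directly from $u_t=a^{ij}(t)D_{ij}u-\lambda u-f$, since $\|a^{ij}D_{ij}u\|_{L_2}\le N(d,\delta)\|D^2u\|_{L_2}$, which completes the proof.

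The only genuinely delicate step is the rigorous passage to the Duhamel formula: justifying that the homogeneous solution drops out as $t\to-\infty$ for a.e.\ $\xi$, and separately accounting for the exceptional frequency $\xi=0$ in the case $\lambda=0$. Everything downstream is a routine combination of Plancherel's theorem and Young's inequality, and the fact that the time interval is the half–line $(-\infty,T)$ rather than all of $\bR$ causes no difficulty, since the value of $\hat u$ at a time $t<T$ depends only on $\hat f(s,\xi)$ with $s<t$.
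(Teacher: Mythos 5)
Your proof is correct and follows essentially the same route the paper indicates for this lemma: Fourier transform in $x$, solve the resulting one-parameter family of ODEs in $t$ via Duhamel, and pass from the convolution bound to the $L_2$ estimate via Plancherel (your use of Young's inequality with the $L_1$ kernel is the same mechanism the paper attributes to the Minkowski inequality). The only cosmetic point is that bounding $\|a^{ij}D_{ij}u\|_{L_2}$ and the upper bound on $\mu$ really involve the $L_\infty$ bound $K$ on $a^{ij}$ rather than $\delta$ alone, but this imprecision is already present in the paper's own statement of the lemma.
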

\begin{proof}
The lemma is proved by taking the Fourier transform with respect to $x$, solving the resulting ODE with respect to $t$, and then applying the Minkowski inequality and Parseval's identity. Alternatively, the lemma can be proved by using testing the equation with $\lambda u$ and $\Delta u$ and integrating by part, noting that $C_0^{\infty}((-\infty,T]\times \bR^{d})$ is dense in
$ W_2^{1,2}(\mathbb{R}_T^{d+1})$. We omit the details.
\end{proof}

The above lemma also yields the corresponding $L_2$-solvability. We will also use the following interior estimate.

\begin{lemma}
                                    \label{lem2.1}
Suppose that $a^{ij}=a^{ij}(t)$ and $u\in W_2^{1,2}(Q_1)$
satisfies
$$
Pu-\lambda u=0\quad \text{in}\quad Q_1.
$$
Then we have
\begin{equation}
                    \label{eq12.43}
[u]_{C^{1/2,1}(Q_{1/2})}\leq N(d,\delta)\|u\|_{L_2(Q_1)}.
\end{equation}
\end{lemma}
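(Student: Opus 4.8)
The plan is to establish the interior Hölder estimate \eqref{eq12.43} for solutions of the constant-in-$x$ (but measurable-in-$t$) parabolic equation $Pu - \lambda u = 0$ in $Q_1$ via an internal iteration combined with the $L_2$ a priori bound of Lemma \ref{lem1.1}. The key structural fact is that since $a^{ij} = a^{ij}(t)$, any $x$-derivative $D_k u$ of a solution is again a solution of the same homogeneous equation (the coefficients do not vary in $x$, so differentiating commutes with $P-\lambda$), and likewise the time-difference quotients, after subtracting an affine-in-$x$ function if needed, solve the equation. This bootstrapping is the heart of the argument.

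First I would reduce to $\lambda = 0$: replacing $u$ by $e^{-\lambda t}u$ (or simply absorbing $\lambda u$ as a zeroth-order term and noting $c$ does not appear here) one may assume without loss that $P u = 0$ in $Q_1$, since the lower-order and $\lambda$ terms only help and are handled by the same scheme with harmless constants; alternatively one keeps $\lambda\ge 0$ throughout and observes all estimates below are uniform in $\lambda\ge 0$. Next, I would prove an $L_2$ Caccioppoli-type interior estimate: for $0<r<\rho\le 1$,
\begin{equation*}
\|u_t\|_{L_2(Q_r)} + \|D^2 u\|_{L_2(Q_r)} + \|Du\|_{L_2(Q_r)} \le N(d,\delta,\rho-r)\,\|u\|_{L_2(Q_\rho)},
\end{equation*}
obtained by multiplying the equation by $\zeta^2 u$, $\zeta^2 \Delta u$ (or $\zeta^2 D_{ij}u$) for a suitable cutoff $\zeta$ supported in $Q_\rho$ and equal to $1$ on $Q_r$, integrating by parts in $x$ and $t$, and using ellipticity exactly as in the second proof of Lemma \ref{lem1.1}; the cross terms coming from derivatives of $\zeta$ are absorbed using Young's inequality and the uniform bound $|a^{ij}|\le K$.

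Then comes the bootstrap. Since $v := D_k u$ solves $Pv = 0$ in $Q_{\rho}$ for $\rho<1$, applying the Caccioppoli estimate to $v$ on shrinking cylinders controls $\|D^2 u\|$, $\|D^3 u\|$, and $\|D u_t\| = \|(Du)_t\|$ in $L_2$ on a smaller cylinder by $\|Du\|_{L_2(Q_1)}$, hence by $\|u\|_{L_2(Q_1)}$. Iterating, one bounds $\|D^\alpha u\|_{L_2(Q_{1/2})}$ and $\|D^\alpha u_t\|_{L_2(Q_{1/2})}$ for all multi-indices $\alpha$ up to any fixed order by $N(d,\delta)\|u\|_{L_2(Q_1)}$ — note the equation itself also converts $u_t$ into $a^{ij}D_{ij}u$, so second time derivatives are not needed. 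With enough spatial derivatives and one time derivative controlled in $L_2$, a parabolic Sobolev embedding (or Morrey-type inequality adapted to the parabolic scaling, treating $t$ as two space variables) yields $[u]_{C^{1/2,1}(Q_{1/2})}\le N(d,\delta)\|u\|_{L_2(Q_1)}$, which is \eqref{eq12.43}.

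The main obstacle is the borderline measurability of $a^{ij}$ in $t$: one cannot differentiate the equation in $t$, so spatial regularity must be pushed far enough that the parabolic embedding closes using only \emph{one} time derivative (supplied by the equation $u_t = a^{ij}D_{ij}u$, which is merely in $L_2$ in $t$ with no further $t$-regularity). Concretely, one must verify that the finite-order jet $(u, Du, \ldots, D^m u, u_t)$ in $L_2(Q_{1/2})$ with $m$ large enough (depending on $d$) embeds into $C^{1/2,1}$; this is standard but requires care with the anisotropic scaling and with the fact that $u_t$ inherits spatial smoothness from $D^{m}u$ but no temporal smoothness. A secondary technical point is the covering/iteration bookkeeping to get from $Q_1$ down to $Q_{1/2}$ through finitely many shrinkings with a final constant depending only on $d$ and $\delta$; this is routine and I would not belabor it.
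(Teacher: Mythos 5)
Your scheme (interior Caccioppoli, iterate using that each $D_k u$ solves the same equation, absorb $u_t$ via the PDE, finish with Sobolev embedding) is exactly the one the paper uses, so the overall approach matches. Two technical points deserve attention, however, one of which is a genuine flaw in a branch of your argument.

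First, the proposed reduction to $\lambda=0$ by exponential scaling does not work as stated. The correct substitution is $w=e^{\lambda t}u$, which indeed gives $-w_t+a^{ij}(t)D_{ij}w=0$; but to recover the estimate for $u=e^{-\lambda t}w$ on $Q_{1/2}=(-1/4,0)\times B_{1/2}$ one multiplies by factors of size up to $e^{\lambda/4}$, so the constant becomes $\lambda$-dependent, whereas the lemma asserts $N=N(d,\delta)$ uniformly in $\lambda\ge0$. Your fallback route — keeping $\lambda\ge 0$ throughout — is the one the paper follows, and it is the correct one.

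Second, the $\lambda$-uniformity is not merely ``observed''; it requires retaining the term $\lambda u^2$ in the Caccioppoli inequality rather than dropping it as nonnegative. This is the mechanism that gives, after iteration, a bound of the form $(1+\lambda^2)\sup_s\int_{B_r}|D^k u|^2\,dx\le N\int_{Q_R}u^2$, hence $(1+\lambda)\sup_{Q_{1/2}}|D^k u|\le N\|u\|_{L_2(Q_1)}$ for every $k$. This is precisely what is needed to control $u_t=a^{ij}(t)D_{ij}u-\lambda u$ in $L_\infty(Q_{1/2})$ by $N(d,\delta)\|u\|_{L_2(Q_1)}$; if one had only dropped the $\lambda$-term in the energy estimate, the contribution $\lambda\sup|u|$ in $u_t$ would not be controllable uniformly. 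The paper also inserts a mollification in $x$ at the outset so the repeated differentiations and integrations by parts are rigorous; and it uses the plain (anisotropic, space-only) Sobolev embedding at each fixed time, which together with $\sup_s\int_{B_r}|D^ku|^2\,dx$ bounds avoids the need for a parabolic Morrey-type embedding involving $u_t$. Otherwise your argument is the paper's.
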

\begin{proof}
We will prove an estimate which is actually stronger than \eqref{eq12.43}.
By mollification in $x$ and then using the Arzel\`a--Ascoli lemma, without loss of generality, we may assume that $u$ is infinitely differentiable in $x$.
For any $1/2\le r<R\le 1$, choose a smooth cutoff function $\eta$ satisfying
$0\leq\eta\leq1$, $\eta=1$ in $Q_{r}$, and $\eta=0$ outside $(-R^2,R^2)\times B_{R}$.
Testing the equation with $u\eta^2$, integrating by
parts, and using  Young's inequality, we get for any $s\in (-R^2,0)$,
\begin{align*}
&\frac 1 2 \int_{B_R}u^2(s,x) \eta^2\,dx+\int_{(-R^2,s)\times B_R}\big(\delta |Du|^2\eta^2+\lambda u^2\eta^2\big)\,dx\,dt\\
&\leq\int_{(-R^2,s)\times B_R}u^2\eta\eta_t-2a^{ij}D_iu\eta \cdot D_j\eta u\,dx\,dt\\
&\leq\int_{(-R^2,s)\times B_R}Nu^2+\frac \delta 2 |Du|^2\eta^2+N|D\eta|^2 u^2\,dx\,dt.
\end{align*}
Taking the supremum with respect to $s\in (-r^2,0)$, we obtain the  Caccioppoli's inequality
$$
\sup_{s\in (-r^2,0)}\int_{B_{r}} u^2\,dx+
\int_{Q_{r}} (|Du|^2+\lambda u^2) \,dx\,dt
\leq N(d,\delta,r,R) \int_{Q_R}u^2\,dx\,dt.
$$
Because $Du$ satisfies the same equation, we have
\begin{align*}
&\sup_{s\in (-r^2,0)}\int_{B_{r}} |Du|^2\,dx+
\int_{Q_{r}} (|D^2u|^2+\lambda |Du|^2) \,dx\,dt\\
&\leq N(d,\delta,r,R) \int_{Q_R}|Du|^2\,dx\,dt.
\end{align*}
By iteration,
\begin{align*}
&\sup_{s\in (-r^2,0)}\int_{B_{r}} |D^ku|^2\,dx+
\int_{Q_{r}} (|D^{k+1}u|^2+\lambda |D^k u|^2) \,dx\,dt\\
&\leq N(d,\delta,r,R,k) \int_{Q_R}|u|^2\,dx\,dt,
\end{align*}
which further implies that
\begin{equation*}
(1+\lambda^2)\sup_{s\in (-r^2,0)}\int_{B_{r}} |D^ku|^2\,dx\leq N(d,\delta,r,R,k) \int_{Q_R}|u|^2\,dx\,dt.
\end{equation*}
Taking $k>d/2$, by the Sobolev embedding theorem,
$$
(1+\lambda)\sup_{Q_{1/2}}|u|\le N\left(\int_{Q_R}|u|^2\,dx\,dt\right)^{1/2}.
$$
Again, because $D^ku$ satisfies the same equation, it follows from the two inequalities above that, for any nonnegative integer $k$,
$$
(1+\lambda)\sup_{Q_{1/2}}|D^k u|\le N\left(\int_{Q_1}|u|^2\,dx\,dt\right)^{1/2}.
$$
Since $u_t=a^{ij}(t)D_{ij}u-\lambda u$, we also have
$$
\sup_{Q_{1/2}}|\partial_tD^ku| \leq N(d,\delta,k)\left(\int_{Q_1}|u|^2\,dx\,dt\right)^{1/2}.
$$
The lemma is proved.
\end{proof}
Observe that \eqref{eq12.43} still holds with $Du$ or $D^2u$ in place of $u$ because the coefficients are time-independent. To proceed further, we define the (parabolic) maximal and sharp function of a function $f\in L_{1,\text{loc}}$ by
$$
\cM  f (t,x) = \sup_{Q \in \cQ, (t,x) \in Q} \fint_{Q} |f(s,y)| \, dy \,ds,
$$
$$
f^{\#}(t,x) = \sup_{Q \in \cQ, (t,x) \in Q} \fint_{Q} |f(s,y) -
(f)_{Q}| \, dy\,ds,
$$
where
$$
\cQ=\{Q_r(z): z=(t,x) \in \bR^{d+1}, r \in (0,\infty)\}.
$$
By the Fefferman--Stein theorem on sharp functions and the  Hardy--Littlewood maximal function theorem, for any $p\in (1,\infty)$ and $f \in L_{p}(\bR^{d+1})$, we have
\begin{equation}
                            \label{eq4.43}
\| f \|_{L_p(\bR^{d+1})} \le N \| f^{\#} \|_{L_{p}(\bR^{d+1})},
\quad
\| \cM  f \|_{L_{p}(\bR^{d+1})} \le N \| f\|_{L_{p}(\bR^{d+1})},
\end{equation}
where $N = N(d,p)$.
It follows from Lemmas \ref{lem1.1}, \ref{lem2.1}, and a decomposition procedure as in \cite{MR2304157}, we have the following mean oscillation estimate.

\begin{lemma}
                                \label{lem2.3}
Let $r\in (0,\infty)$, $\mu\in (0,1/4)$, $\lambda\ge 0$,
$X_0 = (t_0,x_0) \in \bR^{d+1}$, and
$f \in L_{2,\text{loc}}(\bR^{d+1})$.
Assume that $u\in W^{1,2}_{2,\text{loc}}$ satisfies
\begin{equation}
                                \label{eq4.33}
Pu-\lambda u=f
\end{equation}
 in $Q_r(X_0)$. Then we have
\begin{equation}
 			\label{eq5.11}
\left(|U-(U)_{Q_{\mu r}(X_0)}|\right)_{Q_{\mu r}(X_0)}
\le N\mu(|U|^2)_{Q_{r}(X_0)}^{\frac 12}+N\mu^{-1-\frac d 2}(|f|^2)_{Q_{r}(X_0)}^{\frac 12},	
\end{equation}
where $U=|D^2 u|+\lambda^{1/2}|Du|+\lambda |u|$ and $N=N(d,\delta)>0$.
\end{lemma}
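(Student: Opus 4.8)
The plan is to use the perturbative decomposition introduced in \cite{MR2304157}. The key structural fact is that the principal coefficients $a^{ij}=a^{ij}(t)$ are independent of $x$: this makes both Lemma \ref{lem1.1} and Lemma \ref{lem2.1} applicable, and, equally importantly, it lets the interior H\"older estimate of Lemma \ref{lem2.1} be applied to $Du$ and $D^2u$ as well. After a parabolic rescaling one splits $u$ into a part carrying the right-hand side $f$, which is handled by the $L_2$ theory alone, and a part solving the homogeneous equation, which is handled by Lemma \ref{lem2.1}; the first part is allowed to blow up like a (large) negative power of $\mu$, while the second must produce the gain factor $\mu$.

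\emph{Reduction and decomposition.} Translating $X_0$ to the origin and applying the dilation $u(t,x)\mapsto u(t_0+r^2t,\,x_0+rx)$ — which maps $Q_r(X_0)$ onto $Q_1$, replaces $\lambda$ by $r^2\lambda$, and multiplies $f$ and $U$ by the same factor $r^2$ — makes the two sides of \eqref{eq5.11} scale identically, so it suffices to take $r=1$ and $X_0=0$. By mollifying $u$ in $x$ (permissible since the coefficients are $x$-free, exactly as in the proof of Lemma \ref{lem2.1}) we may assume $u$ is smooth in $x$, so all quantities below are classical. Since $f\chi_{Q_1}\in L_2(\bR^{d+1})$, the $L_2$-solvability noted after Lemma \ref{lem1.1} produces $w\in W^{1,2}_2(\bR^{d+1})$ with $Pw-\lambda w=f\chi_{Q_1}$ and
\begin{equation*}
\|D^2w\|_{L_2(\bR^{d+1})}+\lambda^{1/2}\|Dw\|_{L_2(\bR^{d+1})}+\lambda\|w\|_{L_2(\bR^{d+1})}\le N(d,\delta)\,\|f\|_{L_2(Q_1)}.
\end{equation*}
Set $v=u-w$, so $Pv-\lambda v=0$ in $Q_1$, and write $V=|D^2v|+\lambda^{1/2}|Dv|+\lambda|v|$ and $W=|D^2w|+\lambda^{1/2}|Dw|+\lambda|w|$. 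From $u=v+w$ and the triangle inequality, $U\le V+W$, $V\le U+W$, and $|U-V|\le W$, all pointwise in $Q_1$.

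\emph{The two building-block estimates.} For $w$: since $Q_\mu\subset Q_1$ and $|Q_\mu|=\mu^{d+2}|Q_1|$, H\"older's inequality and the displayed bound give
\begin{equation*}
(W)_{Q_\mu}\le (|W|^2)_{Q_\mu}^{1/2}\le \mu^{-(d+2)/2}(|W|^2)_{Q_1}^{1/2}\le N(d,\delta)\,\mu^{-1-d/2}(|f|^2)_{Q_1}^{1/2},
\end{equation*}
using $\|W\|_{L_2(Q_1)}\le\|D^2w\|_{L_2}+\lambda^{1/2}\|Dw\|_{L_2}+\lambda\|w\|_{L_2}$. For $v$: as the coefficients do not depend on $x$, the functions $Dv$ and $D^2v$ also solve the homogeneous equation in $Q_1$, so Lemma \ref{lem2.1} together with the remark following it gives $[D^2v]_{C^{1/2,1}(Q_{1/2})}\le N\|D^2v\|_{L_2(Q_1)}$, $\lambda^{1/2}[Dv]_{C^{1/2,1}(Q_{1/2})}\le N\lambda^{1/2}\|Dv\|_{L_2(Q_1)}$, and $\lambda[v]_{C^{1/2,1}(Q_{1/2})}\le N\lambda\|v\|_{L_2(Q_1)}$, each of which is $\le N(d,\delta)\|V\|_{L_2(Q_1)}$ because $|D^2v|,\lambda^{1/2}|Dv|,\lambda|v|\le V$ pointwise. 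Since $\mu<1/4$, any two points of $Q_\mu$ lie within parabolic distance $3\mu$, so the oscillation over $Q_\mu$ of each of $|D^2v|$, $\lambda^{1/2}|Dv|$, $\lambda|v|$ is at most $3\mu$ times the corresponding $C^{1/2,1}$-seminorm over $Q_{1/2}$; adding these three contributions and invoking the previous sentence,
\begin{equation*}
(|V-(V)_{Q_\mu}|)_{Q_\mu}\le N(d,\delta)\,\mu\,(|V|^2)_{Q_1}^{1/2}.
\end{equation*}

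\emph{Conclusion.} Using $(|g-(g)_{Q_\mu}|)_{Q_\mu}\le 2(|g-c|)_{Q_\mu}$ with $c=(V)_{Q_\mu}$, then $|U-V|\le W$, then the two displays above together with $V\le U+W$ and $(|W|^2)_{Q_1}^{1/2}\le N(|f|^2)_{Q_1}^{1/2}$, and finally $\mu<1$, we obtain
\begin{align*}
(|U-(U)_{Q_\mu}|)_{Q_\mu}
&\le 2(W)_{Q_\mu}+2(|V-(V)_{Q_\mu}|)_{Q_\mu}\\
&\le N\mu^{-1-d/2}(|f|^2)_{Q_1}^{1/2}+N\mu\bigl((|U|^2)_{Q_1}^{1/2}+(|W|^2)_{Q_1}^{1/2}\bigr)\\
&\le N\mu\,(|U|^2)_{Q_1}^{1/2}+N\mu^{-1-d/2}(|f|^2)_{Q_1}^{1/2}.
\end{align*}
Undoing the scaling recovers \eqref{eq5.11}. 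I expect the only genuinely delicate point to be the bookkeeping that makes the two scalings fit together: the homogeneous piece $v$ must yield the true gain $\mu$ (which is exactly the interior $C^{1/2,1}$-estimate, and is where the $x$-independence of the coefficients is used), whereas the inhomogeneous piece $w$ is controlled only by the crude volume ratio $|Q_1|/|Q_\mu|=\mu^{-(d+2)}$, producing the harmless but large factor $\mu^{-1-d/2}$; the remaining manipulations are routine, using only the sub-additivity of $U=|D^2u|+\lambda^{1/2}|Du|+\lambda|u|$ and that $\mu<1$.
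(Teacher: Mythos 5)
Your proof is correct and follows essentially the route the paper indicates: the paper simply asserts that the lemma ``follows from Lemmas \ref{lem1.1}, \ref{lem2.1}, and a decomposition procedure as in \cite{MR2304157},'' and your argument fills in precisely that decomposition. After rescaling to $r=1$, you split $u=v+w$ with $w$ solving the $L_2$-problem with right-hand side $f\chi_{Q_1}$ (handled by Lemma \ref{lem1.1} and the crude volume ratio $\mu^{-(d+2)/2}$) and $v$ homogeneous on $Q_1$ (handled by the interior $C^{1/2,1}$ estimate of Lemma \ref{lem2.1}, applied also to $Dv$ and $D^2v$, which gives the gain factor $\mu$); the scaling of $\lambda$, $f$, and $U$ is tracked correctly, and the sub-additivity manipulations and absorption of $\mu(|f|^2)^{1/2}_{Q_1}$ into $\mu^{-1-d/2}(|f|^2)^{1/2}_{Q_1}$ are all sound. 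The only cosmetic points worth noting are (i) the hypothesis $a^{ij}=a^{ij}(t)$, implicit in the paper's statement of the lemma, should be stated explicitly since it is what makes both Lemma \ref{lem1.1} and the differentiated version of Lemma \ref{lem2.1} applicable, and (ii) when $\lambda=0$ the solvability for $w$ over all of $\bR^{d+1}$ should be replaced by solving on $\bR^{d+1}_0=(-\infty,0)\times\bR^d$ (or on $(-1,0)\times\bR^d$ with zero initial data), since the $L_2$ estimate then controls only $w_t$ and $D^2w$, not $w$ itself; this is a standard adjustment and does not affect the argument.
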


From Lemma \ref{lem2.3}, we see that if $u$ satisfies \eqref{eq4.33} in the whole space $\bR^{d+1}$, then we have the pointwise estimate
\begin{equation}
                                \label{eq4.41}
U^\#\le N\mu \cM^{1/2} (U^2)+N\mu^{-1-d/2}\cM^{1/2} (f^2).
\end{equation}
Now we are ready to give the proof of Proposition \ref{prop2.4}.
\begin{proof}[Proof of Proposition \ref{prop2.4}]
For simplicity, we assume $T=\infty$ and $b=c=0$. For the general case, it suffices to take the zero extension of $f$ beyond $T$, and set $\lambda_0$ sufficiently large to absorb lower-order terms.
Here we just give an outline of the proof.

{\bf Step 1:} $p>2$, $a^{ij}=a^{ij}(t)$. In this case, \eqref{eq4.38} is a consequence of \eqref{eq4.41} and \eqref{eq4.43}, by taking a sufficiently small $\mu$. The a priori estimate and the method of continuity then give the $L_p$ solvability in this case.

{\bf Step 2:} $p\in (1,2)$, $a^{ij}=a^{ij}(t)$. In this case, \eqref{eq4.38} is obtained from the previous step and a duality argument, which is possible because $a^{ij}$ are independent of $x$.

{\bf Step 3.} The a priori estimate and the method of continuity then give the $L_p$ solvability when $a^{ij}=a^{ij}(t)$. Now by this as well as a bootstrap/iteration argument, we obtain an estimate analogous to \eqref{eq5.11}:
\begin{equation}
                                \label{eq4.41b}
\left(|U-(U)_{Q_{\mu r}(X_0)}|\right)_{Q_{\mu r}(X_0)}
\le N\mu(U^{p_0})_{Q_{r}(X_0)}^{\frac 1 {p_0}}+N\mu^{-\frac {2+d} {p_0}}(|f|^{p_0})_{Q_{r}(X_0)}^{\frac 1 {p_0}}
\end{equation}
for any $p_0\in (1,\infty)$.

{\bf Step 4:} the general case. We use the argument of freezing the coefficients (with respect to $x$ only). Take $p_0,\nu_1>1$ such that $p_0\nu_1<p$ and let $\nu_2=\nu_1/(\nu_1-1)$.
Assume for the moment that $u$ vanishes whenever $t\notin (-\gamma^2 R^2,0)$ for small constants $\varepsilon,R>0$ to be specified later. For $r\in (0,R)$ and $X\in \bR^{d+1}$, we rewrite the equation into
$$
-u_t+\bar a^{ij}(t)D_{ij}u=\bar f:=(\bar a^{ij}(t)-a^{ij})D_{ij}u,
$$
where
$$
\bar a^{ij}(t)=\fint_{B_r(x)}a^{ij}(t,y)\,dy.
$$
It then follows from \eqref{eq4.41b} and H\"older's inequality that
\begin{align*}
&\left(|U-(U)_{Q_{\mu r}(X)}|\right)_{Q_{\mu r}(X)}\\
&\le N\mu(U^{p_0})_{Q_{r}(X}^{\frac 1 {p_0}}
+N\mu^{-\frac {2+d} {p_0}}(|f|^{p_0})_{Q_{r}(X)}^{\frac 1 {p_0}}
+N\mu^{-\frac {2+d} {p_0}}(\omega(R)+\gamma^2)^{\frac 1 {{p_0}\nu_2}}(U^{{p_0}\nu_1})_{Q_{r}(X)}^{\frac 1 {{p_0}\nu_1}},
\end{align*}
which implies the pointwise estimate
\begin{align}
                    \label{eq9.03}
U^\#&\le N\mu \cM^{\frac 1 {p_0}} (U^{p_0})+N\mu^{-\frac {2+d} {p_0}}\cM^{\frac 1 {p_0}} (|f|^{p_0})\notag\\
&\quad +N\mu^{-\frac {2+d} {p_0}}(\omega(R)+\gamma^2)^{\frac 1 {{p_0}\nu_2}}
\cM^{\frac 1 {{p_0}\nu_1}} (U^{{p_0}\nu_1}).
\end{align}
This together with \eqref{eq4.43} yields \eqref{eq4.38} by taking $\mu$ and then $R$ and $\gamma$ sufficiently small. Without the small support condition, we use a partition of unity argument.
\end{proof}

We conclude this section with a few remarks.

\begin{remark}
                    \label{rem2.9}
In \cite{MR2352490}, Theorems \ref{thm2.1} and \ref{thm2.2} were extended to the case of mixed-norm $L_p-L_q$ spaces (with the restriction $q\ge p$ in the nondivergence case). The main idea is to derive from \eqref{eq4.41} a mean oscillation estimate of the $L^x_p$ norms with respect with the time variable only.
\end{remark}

\begin{remark}
The results in this sections were also generalized to second-order parabolic systems in the whole space in \cite{DKim09}, and to higher-order elliptic and parabolic systems in the whole space, on the half space, and on domains in \cite{MR2771670}. In these work, the leading coefficients satisfy the Legendre--Hadamard condition, which is weaker than the strong ellipticity condition. The proof of the boundary estimates is highly nontrivial. We refer the reader to  \cite{MR2771670} for details.
\end{remark}

\section{Equations with partially VMO coefficients}     \label{sec3}

In this section, we discuss elliptic and parabolic equations with partially VMO leading coefficients, i.e., they are merely measurable with respect to an either fixed or varying space direction (and the time variable), and have small mean oscillations with respect to the orthogonal directions on all small scale.

In \cite{Kim07}, Kim obtained the $W^2_p$ estimate and solvability for nondivergence form elliptic equations with leading coefficients which can be discontinuous at finitely many parallel hyperplanes in $\bR^d$. This extends earlier results by Lorenzi \cite{L72a, L72b}, where the coefficients are assumed to be piecewise constant in each half space.

Elliptic equations in non-divergence form with partially VMO coefficients was first considered in \cite{MR2338417}. Let us denote $\Gamma_r(x)=B_r'(x')\times (x_d-r,x_d+r)$ and
$$
\text{osc}_{\texttt{x}'}\left(a^{ij},\Gamma_r(x)\right)
= \fint_{x_d-r}^{x_d+r}
\fint_{B'_r(x')} \big| a^{ij}(y',y_d) - \fint_{B'_r(x')} a^{ij}(z',y_d) \, dz' \big| \, dy' \, dy_d,
$$
and set
\begin{align*}
a^{\texttt{x}',\#}_R &= \sup_{x \in \bR^d} \sup_{r \le R}  \sum_{i,j =1}^{d} \text{osc} \left(a^{ij},\Gamma_r(x)\right).
\end{align*}

\begin{assumption}
There is an increasing continuous function $\omega(r)$ defined on $[0,\infty)$
such that $\omega(0) = 0$ and $a_{\texttt{x}',R}^{\#} \le \omega(R)$.
\end{assumption}

Recall the operators $P$, $\cP$, $L$, and $\cL$ introduced in \eqref{eq5.18} and \eqref{eq5.19}. The main result of \cite{MR2338417} is the following theorem.

\begin{theorem}[Theorem 2.4 of \cite{MR2338417}]
                            \label{thm3.1}
Let $p\in (2,\infty)$. Then there exists a constant $\lambda_0 \ge 0$, depending
only on $d$, $\delta$, $K$, $p$, and the function $\omega$, such that, for any $\lambda>\lambda_0$ and $f\in L_p(\bR^d)$, there
exists a unique solution $u\in W^2_p(\bR^d)$ satisfying $Lu-\lambda u = f$ in $\bR^d$. Furthermore, there is a constant $N$, depending only on $d$, $\delta$, $K$, and $p$, such that for any $\lambda\ge \lambda_0$ and $u\in W^2_p(\bR^d)$,
$$
\|D^2 u \|_{L_{p}(\bR^{d})}
+\lambda^{1/2} \|D u \|_{L_{p}(\bR^{d})}
+\lambda \|u \|_{L_{p}(\bR^{d})}\le N \| L u-\lambda u \|_{L_{p}(\bR^{d})}.
$$
\end{theorem}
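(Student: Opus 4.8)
The plan is to follow the scheme of Section~\ref{sec2}, modified to accommodate the single measurable direction, which we call $x_d$; write $x=(x',x_d)$. As in the proof of Proposition~\ref{prop2.4}, taking $\lambda_0$ large enough to absorb the lower-order terms (using $|b^iD_iu+cu|\le K\lambda^{-1/2}(\lambda^{1/2}|Du|)+K\lambda^{-1}(\lambda|u|)$) reduces matters to $b=c=0$. The quantity to estimate is now
\[
U:=|D_{x'}Du|+\lambda^{1/2}|Du|+\lambda|u|,
\]
where $D_{x'}Du$ denotes all second derivatives $D_{ij}u$ with $\min(i,j)\le d-1$ --- i.e.\ every second derivative \emph{except} $D_{dd}u$. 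The point of excluding $D_{dd}u$ is that it cannot be expected to be continuous when $a^{dd}$ is merely measurable in $x_d$; however, from $Lu-\lambda u=f$ with $a^{ij}=a^{ij}(x_d)$ one has $a^{dd}(x_d)D_{dd}u=f+\lambda u-\sum_{\min(i,j)\le d-1}a^{ij}D_{ij}u$, hence $|D^2u|\le|D_{x'}Du|+\delta^{-1}(|f|+U)\le N(d,\delta)(|f|+U)$ pointwise a.e. So it suffices to estimate $U$ in $L_p$ and to recover $D_{dd}u$ algebraically at the end.

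First I would record the $L_2$ a priori estimate and solvability for the model operator $a^{ij}=a^{ij}(x_d)$ --- the analog of Lemma~\ref{lem1.1}, classical and obtained by a partial Fourier transform in $x'$ which reduces the problem to a family of one-dimensional elliptic equations in $x_d$ with measurable coefficients. The crux is the interior estimate for the model homogeneous equation $a^{ij}(x_d)D_{ij}u-\lambda u=0$ in $B_1$, an analog of Lemma~\ref{lem2.1} but exploiting \emph{only} tangential differentiation: since $a$ is independent of $x'$, each $D_{x'}^ku$ solves the \emph{same} equation, so a Caccioppoli iteration together with Sobolev embedding in $x'$ controls $\|D_{x'}^kD_d^mu\|_{L_2(B_{3/4})}$ for all $k$ and $m\le1$, and, differentiating the equation in $x'$ only, also the combinations $D_{x'}^kD_{dd}u$. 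Feeding this back shows that on $B_{1/2}$ the genuinely good quantity $D_{x'}Du$ --- along with $Du$ and $u$ --- is smooth in $x'$ and Lipschitz in $x_d$, which yields $[U]_{C^{0,1}(B_{1/2})}\le N(d,\delta)\,(|U|^2)_{B_1}^{1/2}$. Combining this with the $L_2$ estimate and a decomposition $u=u_1+u_2$ exactly as in Lemma~\ref{lem2.3} gives, for the model operator,
\[
\bigl(|U-(U)_{B_{\mu r}(x_0)}|\bigr)_{B_{\mu r}(x_0)}\le N\mu\,(|U|^2)_{B_r(x_0)}^{1/2}+N\mu^{-1-d/2}(|f|^2)_{B_r(x_0)}^{1/2}.
\]

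For the general operator I would freeze the coefficients in $x'$ alone: put $\bar a^{ij}(x_d)=\fint_{B'_r(x_0')}a^{ij}(y',x_d)\,dy'$ and rewrite the equation as $\bar a^{ij}(x_d)D_{ij}u-\lambda u=\bar f:=f+(\bar a^{ij}-a^{ij})D_{ij}u$. Fix $\nu_1\in(1,p/2)$ --- possible precisely because $p>2$ --- and $\nu_2=\nu_1/(\nu_1-1)$. Applying the model mean oscillation estimate with $\bar a,\bar f$, then H\"older's inequality, the bound $\fint_{B_r(x_0)}|\bar a^{ij}-a^{ij}|\le N(d)\,\omega(r)$ (valid since $B_r(x_0)\subset\Gamma_r(x_0)$ and by the partially VMO assumption, as $a$ is bounded), and the pointwise bound $|D^2u|\le N(|f|+U)$ above, one obtains
\[
U^{\#}\le N\mu\,\cM^{1/2}(U^2)+N_\mu\bigl(\cM^{1/2}(|f|^2)+\cM^{1/(2\nu_1)}(|f|^{2\nu_1})\bigr)+N_\mu\,\omega(R)^{1/(2\nu_2)}\,\cM^{1/(2\nu_1)}(U^{2\nu_1}).
\]
Since $p/2>1$ and $p/(2\nu_1)>1$, the elliptic analogues of \eqref{eq4.43} convert this into $\|U\|_{L_p(\bR^d)}\le N\mu\|U\|_{L_p}+N_\mu\|f\|_{L_p}+N_\mu\,\omega(R)^{1/(2\nu_2)}\|U\|_{L_p}$; choosing $\mu$ small, then $R$ small, absorbs the first and third terms and gives $\|U\|_{L_p}\le N\|f\|_{L_p}$, whence $\|D^2u\|_{L_p}+\lambda^{1/2}\|Du\|_{L_p}+\lambda\|u\|_{L_p}\le N\|f\|_{L_p}$. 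As usual this is first established under the assumption that $u$ has small support in $x_d$, so that scales $r>R$ in $U^{\#}$ are harmless, and the estimate for general $u$ follows by a partition of unity in $x_d$ --- which makes $\lambda_0$, but not $N$, depend on $\omega$. Solvability (and uniqueness) then follow by the method of continuity along $L_\tau:=(1-\tau)\Delta+\tau L$, $\tau\in[0,1]$, each member of which satisfies the same a priori estimate with the same constants and the same $\omega$, starting from the classical solvability of $\Delta-\lambda$ on $\bR^d$.

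I expect the interior estimate for the model homogeneous equation to be the main obstacle: one must use that only tangential differentiation preserves the equation, extract from this the full Lipschitz regularity of the good second derivatives $D_{x'}Du$ on smaller balls, treat $D_{dd}u$ purely algebraically, and arrange the bookkeeping so that the resulting mean oscillation estimate has the same shape as \eqref{eq5.11}. A secondary but important point is that the hypothesis $p>2$ is genuinely used here --- it is exactly what allows the choice $\nu_1<p/2$ and keeps the Hardy--Littlewood maximal operator bounded on $L_{p/2}$ and $L_{p/(2\nu_1)}$; covering $1<p<2$ would instead require a duality argument against the divergence-form operator $D_{ij}(\bar a^{ij}(x_d)\,\cdot\,)$, hence the corresponding divergence-form theory.
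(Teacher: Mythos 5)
Your proposal is correct and follows essentially the same route as the paper's proof in \cite{MR2338417}: establish the $L_2$ theory for the model operator $a^{ij}(x_d)D_{ij}-\lambda$ by a partial Fourier method, derive a mean oscillation estimate for the ``good'' second derivatives $D_{x'}Du$ (not $D_{dd}u$) via tangential differentiation and Caccioppoli iteration, recover $D_{dd}u$ algebraically from the equation, then freeze the coefficients in $x'$ and run the Fefferman--Stein/Hardy--Littlewood machinery. You also correctly locate the source of the restriction $p>2$: the base oscillation estimate is $L_2$-based and cannot be pushed below $p_0=2$ because the duality argument used in Step~2 of Section~\ref{sec2} is unavailable when the coefficients are merely measurable in $x_d$.
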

As an application of the above theorem, they also obtained the corresponding result in the half space with either the Dirichlet, Neumann, or oblique
derivative boundary condition, by using an even/odd extension argument. See \cite{MR2300337} for results about nondivergence form parabolic equations, as well as \cite{MR2332574, MR2644213} for interesting extensions to more general coefficients.

The proof of Theorem \ref{thm3.1} is based on the method described in the previous section. The authors first proved the theorem when $p=2$ by using a Fourier method and the maximum principle, and then derived the mean oscillation estimate for $D_{xx'} u$. Finally, the estimate of $D_{dd} u$ can be obtained by using the equation. Note that here one cannot apply the duality argument to get the result for $p\in (1,2)$ because the coefficients are only measurable in $x_d$. This explains why the condition $p>2$ is imposed in the theorem.

In \cite{MR2540989}, Krylov generalized the result in Theorem \ref{thm3.1} to elliptic equations with ``variably'' partially VMO coefficients, i.e., the leading coefficients are assumed to be measurable in one spatial direction and have vanishing mean oscillation in the orthogonal directions in each small ball, with the direction allowed to depend on the ball.
The following generalized Fefferman--Stein sharp function obtained in \cite{MR2540989} plays an essential role in the proof. Let
$$
\bC_n=\{C_n(i_1,\ldots,i_d),i_1,\ldots, i_d\in \bZ\},\quad n\in \bZ
$$
be the filtration of partitions given by cubes, where
\begin{equation*}
C_n(i_1,\ldots, i_d)=[i_1 2^{-n}, (i_1 + 1)2^{-n})\times \cdots \times [i_d 2^{-n}, (i_d + 1)2^{-n}).
\end{equation*}
\begin{theorem}[Theorem 2.7 of \cite{MR2540989}]
                                    \label{generalStein}
Let $p\in (1,\infty)$, $u,v,g\in L_1$. Assume $v\ge |u|$, $g\ge 0$ and for any $n\in \bZ$ and $C\in \bC_n$ there exists a measurable function $u^C$ given on $C$ such that $|u|\le u^C\le v$ on $C$ and
$$
\int_C|u^C-(u^C)_C|\,dx \le \int_C g\,dx.
$$ Then we have
\begin{equation*}
\|u\|_{L_p}^p\leq N\|g\|_{L_p}\|v\|_{L_p}^{p-1},
\end{equation*}
where $N=N(d,p)>0$.
\end{theorem}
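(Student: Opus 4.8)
The plan is to reduce the statement to a single pointwise‑in‑$s$ bound on the distribution function of $u$ in which the dyadic maximal function of $v$ appears, and then to integrate that bound against $s^{p-1}\,ds$; the product $\|g\|_{L_p}\|v\|_{L_p}^{p-1}$ will drop out of the integration together with H\"older's inequality. Throughout I write $M_d h(x)=\sup_n\fint_{C_n(x)}|h|$ for the dyadic maximal function relative to the filtration $\{\bC_n\}$ (here $C_n(x)$ is the cube of $\bC_n$ containing $x$), and I use $|h|\le M_d h$ a.e.\ (dyadic Lebesgue differentiation) and $\|M_d h\|_{L_p}\le N(p)\|h\|_{L_p}$ (Doob's maximal inequality). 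When $v$ or $g$ fails to lie in $L_p$ there is nothing to prove.

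First I would prove the distributional estimate
\[
\bigl|\{|u|>s\}\bigr|\le\frac{2}{s}\int_{\{M_d v>2^{-d-1}s\}}g\,dx\qquad(s>0).
\]
The heart of it is a one‑cube observation using the hypothesis in an essential way: if $C$ is a dyadic cube with $\fint_C v\le s/2$ and $u^C$ is the function attached to $C$ by the assumption, then $(u^C)_C=\fint_C u^C\le\fint_C v\le s/2$, so on $C$ one has $\{|u|>s\}\subseteq\{u^C>s\}\subseteq\{|u^C-(u^C)_C|>s/2\}$, and Chebyshev's inequality gives $|\{|u|>s\}\cap C|\le\frac2s\int_C|u^C-(u^C)_C|\le\frac2s\int_C g$. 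To turn this into the displayed bound I need a pairwise disjoint family of such ``good'' cubes covering $\{|u|>s\}$ and staying inside a sublevel set of $M_d v$. Since $|u|\le v$, up to a null set $\{|u|>s\}\subseteq\{M_d v>s/2\}=:\Omega_s$, and $\Omega_s$ is a union of dyadic cubes of size at most $2\|v\|_{L_1}/s$; let $\{P_i\}$ be its maximal dyadic subcubes. Each parent $\hat P_i$ has $\fint_{\hat P_i}v\le s/2$ (as $P_i$ is maximal, $\hat P_i\not\subseteq\Omega_s$), so the $\hat P_i$ are good cubes; on the other hand, averaging $\fint_{\hat P_i}v=2^{-d}\sum_{\text{children}}\fint v$ against $\fint_{P_i}v>s/2$ gives $\fint_{\hat P_i}v>2^{-d-1}s$, so $\hat P_i\subseteq\{M_d v>2^{-d-1}s\}$. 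Taking $\{R_j\}$ to be the maximal elements of $\{\hat P_i\}$ makes the family disjoint while retaining both properties and still covering $\bigcup_i P_i\supseteq\{|u|>s\}$; summing the one‑cube estimate over the $R_j$ then yields the claim.

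After that the rest is routine. Layer‑cake and Fubini turn the displayed estimate into
\[
\|u\|_{L_p}^p=p\int_0^\infty s^{p-1}\bigl|\{|u|>s\}\bigr|\,ds\le N(d,p)\int g\,(M_d v)^{p-1}\,dx,
\]
and H\"older's inequality followed by Doob's maximal inequality bounds the last integral by $\|g\|_{L_p}\|M_d v\|_{L_p}^{p-1}\le N(p)\|g\|_{L_p}\|v\|_{L_p}^{p-1}$, which is the assertion with $N=N(d,p)$.

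The real obstacle — the only genuinely non‑obvious point — is deciding what to stop on. A direct Calder\'on--Zygmund/good‑$\lambda$ decomposition of the level sets of $M_d u$ (the reflex from the classical Fefferman--Stein argument) does not deliver the product $\|g\|_{L_p}\|v\|_{L_p}^{p-1}$; it produces only something like $\|g\|_{L_p}^p+\|v\|_{L_p}^p$, because on a cube where $\fint_C|u|$ is large one has no control on $(u^C)_C$ from above, so the truncation level of $v$ enters additively rather than multiplicatively. Stopping instead at the scale where the dyadic averages of $v$ fall below $s/2$ is exactly what forces $(u^C)_C\le s/2$, after which the oscillation bound built into the hypothesis does all the work. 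The minor technical wrinkle is the passage from the maximal cubes $P_i$ of $\{M_d v>s/2\}$ to the disjoint family $\{R_j\}$ of their parents, which is needed to reconcile disjointness, covering of $\{|u|>s\}$, and containment in a sublevel set of $M_d v$ all at once.
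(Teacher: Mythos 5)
Your proof is correct and takes essentially the same approach as the paper: the ``stopping time argument'' the text alludes to is a Calder\'on--Zygmund decomposition driven by the dyadic averages of $v$ (not $u$), which is exactly what you implement via the maximal cubes of $\{M_d v>s/2\}$ and their parents, followed by the layer-cake/Fubini/H\"older computation to turn the resulting distributional estimate into the product bound. Your remark that stopping on $v$ is precisely what forces $(u^C)_C\le s/2$ and hence produces the multiplicative form $\|g\|_{L_p}\|v\|_{L_p}^{p-1}$ is the key point of Krylov's argument as well.
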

The proof of this theorem is based on a stopping time argument. It can be readily extended to the parabolic setting and to more general underlying measure spaces with a metric (or quasi-metric). See Section \ref{sec4} for some examples.

For divergence form elliptic and parabolic equations, partially VMO coefficients were later studied in \cite{MR2601069, MR2584743}. In \cite{MR2601069}, elliptic equations were considered in the whole space, on a half space $\{x:x_d>0\}$, or on a bounded Lipschitz domain with a small Lipschitz constant with the Dirichlet and conormal boundary conditions.  The $W^1_p$ estimate and solvability were obtained. In the bounded domain case, the leading coefficients are assumed to be partially VMO in the interior of the domain and VMO near the boundary. Mixed norm estimates were also proved in the whole space and half space cases. In \cite{MR2584743}, the author studied parabolic equations with leading coefficients which are measurable in $(t,x_d)$ and have small mean oscillation in the other variables in small cylinders, except $a^{11}$ which is measurable in $x^d$ and have small mean oscillation in the remaining variables in small cylinders. Let us state one of the main results in \cite{MR2584743}.
\begin{theorem}[Corollary 5.5 of \cite{MR2584743}]
                                        \label{prop1}
Let $p\in (1,\infty)$, $a^{ij}=a^{ij}(x_d)$, $ u \in C_0^\infty$, and $f,g\in  L_p(\bR^{d+1})$. Then there exists a constant $N>0$, depending only
on $d$, $p$, and $\delta$, such that we have
\begin{equation*}
\lambda \|  u \|_{L_p(\bR^{d+1})}
+ \sqrt\lambda\|Du \|_{L_p(\bR^{d+1})}
\le N\sqrt\lambda\|g\|_{L_p(\bR^{d+1})}+N\|f\|_{L_p(\bR^{d+1})}
\end{equation*}
provided that $\lambda \ge 0$ and $\cP u-\lambda u=f+\Div g$ in $(\bR^{d+1})$. In particular, when $\lambda=0$ and $f\equiv 0$, we have
\begin{equation*}
\|Du \|_{L_p(\bR^{d+1})}
\le N\|g\|_{L_p(\bR^{d+1})}.
\end{equation*}
\end{theorem}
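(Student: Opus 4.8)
The plan is to run Krylov's mean-oscillation scheme from Section \ref{sec2}, adapted to the divergence structure and to coefficients depending only on $x_d$. Since the parabolic rescaling $u(t,x)\mapsto u(t/\lambda,x/\sqrt\lambda)$ preserves the class of $x_d$-dependent coefficients, the case $\lambda>0$ reduces to $\lambda=1$; it then suffices to prove $\|u\|_{L_p}+\|Du\|_{L_p}\le N(\|f\|_{L_p}+\|g\|_{L_p})$ for $\cP u-u=f+\Div g$, the stated weights $\lambda\|u\|_{L_p}+\sqrt\lambda\|Du\|_{L_p}$ and $\sqrt\lambda\|g\|_{L_p}$ reappearing after rescaling back. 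The case $\lambda=0$, $f\equiv0$ is obtained by running the same argument with the $\lambda$- and $f$-terms deleted, yielding $\|Du\|_{L_p}\le N\|g\|_{L_p}$. Throughout, $\lambda$ stands for $0$ or $1$.

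The starting point is the $L_2$ estimate: testing $D_i(a^{ij}D_ju)-\lambda u=f+D_ig_i$ with $u$ and using ellipticity and Young's inequality gives $\lambda\|u\|_{L_2}^2+\|Du\|_{L_2}^2\le N(\delta)(\|f\|_{L_2}^2+\|g\|_{L_2}^2)$ (for $\lambda=0$ one uses $f\equiv0$), hence the $p=2$ case and, by the same token, $L_2$-solvability in $\bR^{d+1}$ and in parabolic cylinders. Next I would establish an interior estimate for a solution $v\in W^{1,2}_{2,\loc}$ of $D_i(a^{ij}(x_d)D_jv)-\lambda v=0$ in $Q_1$. Since the coefficients depend on neither $t$ nor $x'=(x_1,\ldots,x_{d-1})$, the functions $v_t$ and $D_{x'}v$ solve the same equation, so Caccioppoli's inequality iterated as in the proof of Lemma \ref{lem2.1} bounds them and all their $(t,x')$-derivatives on $Q_{1/2}$ by $N(d,\delta)(\|Dv\|_{L_2(Q_1)}+\sqrt\lambda\|v\|_{L_2(Q_1)})$. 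The only genuinely non-smooth direction is $x_d$, where the right object is the conormal quantity $\mathcal U:=a^{dj}D_jv$: its $(t,x')$-derivatives $D_\alpha\mathcal U=a^{dj}D_j(D_\alpha v)$, $\partial_t\mathcal U=a^{dj}D_j(v_t)$, \ldots\ are again conormal quantities of solutions, hence bounded on $Q_{1/2}$, while the equation gives the weak $x_d$-derivative $D_d\mathcal U=\lambda v-\sum_{\alpha<d}a^{\alpha j}D_\alpha D_jv\in L_2$; thus $\mathcal U$ is $C^\infty$ in $(t,x')$ and $H^1$, hence $C^{1/2}$, in $x_d$, and so parabolically H\"older continuous on $Q_{1/2}$ with norm controlled by $N(d,\delta)(\|Dv\|_{L_2(Q_1)}+\sqrt\lambda\|v\|_{L_2(Q_1)})$. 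Using $a^{dd}\ge\delta$ one recovers $D_dv=(a^{dd})^{-1}\bigl(\mathcal U-\sum_{\alpha<d}a^{d\alpha}D_\alpha v\bigr)$, so that $U:=|D_{x'}v|+|a^{dj}D_jv|+\sqrt\lambda|v|$ satisfies $[U]_{C^{\alpha/2,\alpha}(Q_{1/2})}\le N(d,\delta)(\|Dv\|_{L_2(Q_1)}+\sqrt\lambda\|v\|_{L_2(Q_1)})$ for some $\alpha=\alpha(d,\delta)\in(0,1]$.

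With these in hand, for $X_0\in\bR^{d+1}$, $r>0$ and $\mu\in(0,1/4)$ I would split $u=v+w$ on $Q_r(X_0)$, with $w\in W^{1,2}_2(Q_r(X_0))$ solving the equation with right-hand side $f+D_ig_i$ and vanishing lateral and initial data and $v=u-w$ a homogeneous solution. Applying the interior estimate to $v$ and the energy bound to $w$, and then arguing exactly as in the derivation of \eqref{eq5.11}--\eqref{eq4.41}, one gets the pointwise sharp-function estimate $U^\#\le N\mu^{\alpha}\cM^{1/2}(U^2)+N\mu^{-c}\bigl(\cM^{1/2}(|f|^2)+\cM^{1/2}(|g|^2)\bigr)$, where $U=|D_{x'}u|+|a^{dj}D_ju|+\sqrt\lambda|u|$. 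Combined with \eqref{eq4.43} and absorption for small $\mu$, this yields $\|U\|_{L_p}\le N(\|f\|_{L_p}+\|g\|_{L_p})$ for $p\in[2,\infty)$; one then recovers $\|D_du\|_{L_p}$ from $\|a^{dj}D_ju\|_{L_p}$ and $\|D_{x'}u\|_{L_p}$ via the formula above. For $p\in(1,2)$ I would invoke duality: the adjoint operator has the same structure ($a^{ij}$ replaced by $a^{ji}$, still a function of $x_d$ alone), so the already-proved estimate at the conjugate exponent $p'>2$ for the adjoint equation yields, by the usual pairing, the estimate at $p$. Rescaling then gives the stated estimate for all $\lambda>0$.

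The hard part is the interior estimate: recognizing that in the bad direction $x_d$ the continuous/regular quantity is the conormal combination $a^{dj}D_jv$ rather than $D_dv$ itself, and extracting from the equation together with the $(t,x')$-smoothness of the coefficients an $L_2$-based $H^1$ --- hence H\"older --- control of $a^{dj}D_jv$ in $x_d$ (this uses, besides the iterated Caccioppoli inequality, the $t$-regularity of $v$, obtained via difference quotients in $t$ since the coefficients are $t$-independent). Everything else is the now-standard sharp-function machinery of Section \ref{sec2}, together with the elementary duality available here precisely because taking adjoints preserves the divergence structure --- in contrast to the non-divergence setting of Theorem \ref{thm3.1}, where the failure of this forces the restriction $p>2$.
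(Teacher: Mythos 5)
Your proposal is correct in substance, but it follows a different (and later) route than the proof of Corollary 5.5 in \cite{MR2584743} that the theorem is attributed to. The surrounding text of the survey is explicit on this point: the proof in \cite{MR2601069, MR2584743} estimates the mean oscillation only of $D_{x'}u$ and then recovers $D_d u$ indirectly, by ``breaking the symmetry'' between $t$, $x_d$, and $x'$ via a delicate re-scaling argument (alternatively, by estimating the sharp function of $a^{dd}D_du$ and invoking the generalized Fefferman--Stein inequality of Theorem~\ref{generalStein}). What you do instead is the device introduced in \cite{MR2800569}: estimate the mean oscillation of the flux $a^{dj}D_j u$ together with $D_{x'}u$, after observing that $a^{dj}D_jv$ --- unlike $D_dv$ --- is H\"older continuous across $x_d$-slices for homogeneous $x_d$-dependent equations, because its $D_d$-derivative is controlled by the equation while all other derivatives are controlled by iterated Caccioppoli estimates. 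The survey singles this out precisely as the ``new method'' that ``bypassed the scaling argument\ldots and greatly simplified the proof.'' Your observation that the ordinary Fefferman--Stein inequality \eqref{eq4.43} suffices here, because $a^{dj}D_j u$ is globally defined when the coefficients depend on $x_d$ alone, is also correct; the generalized Theorem~\ref{generalStein} only becomes indispensable for \emph{variably} partially VMO coefficients, where the reference quantity changes from cube to cube. So: same theorem, genuinely different (and cleaner) route than the cited proof --- though one that is also discussed in the paper. The trade-off is that the flux approach extends readily to systems, while the re-scaling approach of \cite{MR2584743} was designed to accommodate coefficients that are measurable in both $t$ and $x_d$ simultaneously (with a separate treatment of $a^{11}$), a generality your sketch does not address and the stated theorem does not require.

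Two small points on the sketch itself. First, the formula for the weak $x_d$-derivative of $\mathcal U = a^{dj}D_j v$ should read
\begin{equation*}
D_d\mathcal U \;=\; v_t + \lambda v - \sum_{\alpha<d} a^{\alpha j}D_{\alpha j}v,
\end{equation*}
i.e., there is a $v_t$ term that you dropped; it is harmless because $v_t$ and all its $(t,x')$-derivatives are also controlled by the Caccioppoli iteration, but it should be there. Second, the chain ``Caccioppoli in $(t,x')$ $\Rightarrow$ $L_\infty$ bounds on $D_{t,x'}^k v$ $\Rightarrow$ $H^1$, hence $C^{1/2}$, in $x_d$'' is the right idea but compresses a nontrivial step: to start the iteration on $v_t$ one needs to show $v_t\in L_{2,\loc}$ (the equation only gives $v_t\in H^{-1}$ a priori), which requires a Steklov averaging or difference-quotient argument in $t$ before the energy estimates can be applied to $v_t$ as a solution in its own right. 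This is standard and is carried out in \cite{MR2800569}, but at the level of a genuine proof it is not just ``iterate Lemma \ref{lem2.1}.'' The duality step for $p\in(1,2)$ is correctly identified as available here because the adjoint preserves the divergence structure and the $x_d$-dependence of the coefficients.
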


We point out that here the main difficulty is that, since the leading are merely measurable in $x_d$, it is only possible to estimate the sharp function of $D_{x'}u$, not the full gradient $Du$ as in \cite{MR2304157, MR2352490}. Therefore, one need to bound $D_d u$ by $D_{x'}u$. One idea in \cite{MR2601069, MR2584743} is to break the ``symmetry'' of the coordinates so that $t$ and $x_d$ are distinguished from $x'$ by using a delicate re-scaling argument. Another idea is to estimate the sharp function of $a^{dd}D_d u$ instead of $D_d u$, and apply the generalized Fefferman--Stein theorem stated in Theorem \ref{generalStein}.

In \cite{MR2800569}, a new method was developed, which allowed us to treat elliptic and parabolic {\em systems} with variably partially VMO coefficients. In the parabolic setting, the key step in \cite{MR2800569} is to estimate the mean oscillations of $U:=a^{dj} D_j
u$ and $D_iu$, $i = 1, \ldots,d-1,$ instead of the full gradient of
$u$, if the given equation is
$$
u_t-D_i(a^{ij}D_ju) = \Div g.
$$
By using this argument, the authors were able to bypass the scaling argument mentioned above and greatly simplified the proof. Systems with partially VMO coefficients arises from the problems of linearly elastic laminates and composite materials, e.g., in homogenization of layered materials. As an application, a result by Chipot, Kinderlehrer, and Vergara-Caffarelli \cite{CKC} on gradient estimates for elasticity system was improved in \cite{MR2800569}. We refer the reader to \cite{MR2764911} for results on divergence form parabolic systems with more general coefficients, and \cite{MR2835999} for higher-order divergence form elliptic and parabolic systems with variably partially {BMO} coefficients in regular and irregular domains, where the methods in \cite{MR2601069, MR2584743,MR2800569,DK12c} were further developed and a delicate cutoff/reflection argument was introduce (when the usual method of flattening the boundary is unavailable for irregular domains). See also \cite{BW10, BPW13} for similar results proved by using different methods. It is worth noting that the cutoff/reflection argument has been recently applied to study  mixed Dirichlet-conormal boundary conditions in nonsmooth domains.

Coming back to nondivergence form equations, in \cite{MR2896169,MR2833589} the condition $p>2$ in Theorem \ref{thm3.1} and in \cite{MR2540989} was removed. The main idea in these papers is to use the hidden divergence structure of the nondivergence form equations when the coefficients depending only on $x_d$, and then apply the results for divergence form equations in \cite{MR2601069, MR2584743}. To illustrate the idea, below  we give a key result in \cite{MR2896169}, which is stated in the parabolic setting.
\begin{theorem}
						\label{thm3.2}
Let $p\in (1,\infty)$, $T\in (-\infty,\infty]$, and $a^{ij}=a^{ij}(x_d)$. Then for any $u\in W^{1,2}_p(\bR^{d+1}_T)$ and $\lambda\ge 0$, we have
$$
\lambda\|u\|_{L_{p}(\bR^{d+1}_T)}+\sqrt{\lambda}
\|Du\|_{L_{p}(\bR^{d+1}_T)}+\|D^2u\|_{L_{p}(\bR^{d+1}_T)}
+\|u_{t}\|_{L_{p}(\bR^{d+1}_T)}
$$
\begin{equation}
                \label{eq10.30}
\le N\|Pu-\lambda u\|_{L_{p}(\bR^{d+1}_T)},
\end{equation}
where $N=N(d,p,\delta)>0$.
Moreover, for any $f\in L_p(\bR^{d+1}_T)$ and $\lambda>0$ there is a unique $u\in W^{1,2}_p(\bR^{d+1}_T)$ solving $P u-\lambda  u=f$ in $\bR^{d+1}_T$.
\end{theorem}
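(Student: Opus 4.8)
The plan is to establish the a priori estimate \eqref{eq10.30} first, and then to obtain the solvability for $\lambda>0$ from it by the method of continuity.

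\textbf{From the estimate to solvability.} Granting \eqref{eq10.30}, I would join $P$ to the heat operator through the family $P_\tau u:=-u_t+\bigl((1-\tau)\delta^{ij}+\tau a^{ij}(x_d)\bigr)D_{ij}u$, $\tau\in[0,1]$. Every $P_\tau$ has coefficients depending only on $x_d$ and is uniformly elliptic with the same $\delta$, so \eqref{eq10.30} holds for all $P_\tau$ with a common constant, while $P_0-\lambda=-\partial_t+\Delta-\lambda$ is classically invertible from $L_p(\bR^{d+1}_T)$ onto $W^{1,2}_p(\bR^{d+1}_T)$ when $\lambda>0$; the method of continuity then yields the solvability for $P-\lambda$. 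The reduction from finite $T$ to $T=\infty$ is by zero extension past $T$, and by density I may assume $u$ smooth and rapidly decaying.

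\textbf{Proof of the a priori estimate: the hidden divergence structure.} The key observation is that when $a^{ij}=a^{ij}(x_d)$ the operator $P$ is ``mostly'' in divergence form: using only that the coefficients do not depend on $x_1,\dots,x_{d-1}$, one has
\begin{equation*}
a^{ij}D_{ij}u=a^{dd}D_{dd}u+\sum_{m=1}^{d-1}D_m\Bigl(2a^{dm}D_du+\sum_{n=1}^{d-1}a^{mn}D_nu\Bigr),
\end{equation*}
so the only genuinely nondivergence term is $a^{dd}(x_d)D_{dd}u$. I would then differentiate $Pu-\lambda u=f$ in a tangential direction $x_m$, $m\le d-1$ (permissible since the coefficients are independent of $x_m$); using the identity above together with $D_m\bigl(a^{dd}D_{dd}u\bigr)=D_m\bigl(a^{dd}D_d(D_du)\bigr)$, the tangential derivatives $w_m:=D_mu$, together with $v:=D_du$, turn out to satisfy a coupled \emph{divergence-form} parabolic system, with leading coefficients depending only on $x_d$ and with right-hand sides that are divergences of $L_p$ functions depending only on $f$. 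Applying the divergence-form $L_p$ theory — Theorem \ref{prop1} and its extension to strongly elliptic systems, see \cite{MR2584743,MR2800569} — to this system gives, for every $p\in(1,\infty)$ and $\lambda\ge0$,
\begin{equation*}
\|D^2u\|_{L_p(\bR^{d+1}_T)}+\sqrt\lambda\,\|Du\|_{L_p(\bR^{d+1}_T)}\le N\|f\|_{L_p(\bR^{d+1}_T)}
\end{equation*}
(the full Hessian, because $Dv$ contains $D_{dd}u$). The remaining term $\lambda\|u\|_{L_p}$ I would then obtain by the standard device of adding one space variable to trade the zeroth-order term $-\lambda u$ for a second-order one and invoking the $\lambda=0$ version of the estimate in dimension $d+1$; finally $\|u_t\|_{L_p}$ follows from $u_t=a^{ij}D_{ij}u-\lambda u-f$. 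This establishes \eqref{eq10.30}.

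\textbf{The hard part.} The delicate point is the construction of the auxiliary system: one has to pick the unknowns ($w_m$ together with $D_du$, or with the conormal flux $a^{dj}D_ju$) so that (i) no third derivatives of $u$ occur — this is precisely where the cancellations forced by $a^{ij}=a^{ij}(x_d)$ enter — (ii) the resulting leading-coefficient tensor is strongly (Legendre--Hadamard) elliptic, so that the system versions of the divergence-form results are applicable, and (iii) the data of the system involve only $f$, so that the scheme does not turn circular. This bookkeeping is the technical core developed through \cite{MR2338417,MR2540989,MR2601069,MR2584743,MR2800569} and brought to its final form in \cite{MR2896169}.
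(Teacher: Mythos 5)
Your proposal takes a genuinely different route from the paper, and there is a gap in it. The paper's key device is the bi-Lipschitz change of the $x_d$ variable $y_d=\varphi(x_d)=\int_0^{x_d}(a^{dd}(s))^{-1}\,ds$, which converts $P$ into a uniformly nondegenerate \emph{scalar} divergence-form operator $\tilde\cP$ with coefficients depending only on $y_d$; Proposition~\ref{prop1} is then applied to the transformed $v$, to $D_kv$ for $k<d$, and finally (back in the original coordinates) to $w=D_du$. You do not use this change of variables; instead you propose to deduce the Hessian estimate by assembling a strongly elliptic divergence-form \emph{system} for $(w_1,\dots,w_{d-1},D_du)$ and invoking the $L_p$ theory for such systems. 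That step does not close.

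Here is the obstruction. Differentiating $Pu-\lambda u=f$ in $x_m$ ($m<d$) and using your identity, the equation for $w_m:=D_mu$ becomes, with $v:=D_du$,
\begin{equation*}
-w_{m,t}+D_m\bigl(a^{dd}D_dv\bigr)+\sum_{k<d}D_k\Bigl(2a^{dk}D_dw_m+\sum_{n<d}a^{kn}D_nw_m\Bigr)-\lambda w_m=D_mf,
\end{equation*}
where the first divergence term comes from $a^{dd}D_{dd}w_m=a^{dd}D_m(D_dv)=D_m\bigl(a^{dd}D_dv\bigr)$. Crucially, the $w_m$-equation contains no term of the form $D_d\bigl(\cdots D_dw_m\bigr)$: the only ``$D_d$-by-$D_d$'' contribution it produces couples $w_m$ with $v$ through a $D_m$-divergence (it gives $A^{md}_{m d}$), and the $v$-equation contributes $A^{dd}_{d m}$ but nothing to $A^{dd}_{m m}$. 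Hence the (symmetrized) leading-coefficient tensor of the resulting system has $A^{dd}_{mm}=0$ for every $m<d$, so the Legendre--Hadamard form $A^{ij}_{\alpha\beta}\xi_i\xi_j\eta_\alpha\eta_\beta$ vanishes at $\xi=e_d$, $\eta=e_m$. The system is therefore not LH elliptic (hence not strongly elliptic), and the divergence-form $L_p$ theory for systems in \cite{MR2584743,MR2800569,MR2771670} — which requires LH/strong ellipticity — is not applicable. The identity $D_dw_m=D_mv$ which would ``rescue'' the ellipticity is a constraint satisfied by gradients, not by arbitrary solutions of the system, so the system-level theory simply does not see it. The change of variables is precisely what removes this degeneracy: after passing to $y_d$, the bad term $a^{dd}D_{dd}u$ becomes $D_{y_d}\bigl((\tilde a^{dd})^{-1}D_{y_d}v\bigr)$, so each scalar equation encountered is uniformly nondegenerate. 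As a sanity check, the paper explicitly remarks at the end of Section~\ref{sec3} that this argument does \emph{not} extend to systems, precisely because the change of variable is a scalar device; a valid system-level argument avoiding it would contradict that remark. Your closing step (controlling $D_{dd}u$ from a scalar divergence-form equation for $D_du$ with the off-diagonal Hessian moved to the right-hand side), the use of the method of continuity, the reduction to $T=\infty$, and the device for $\lambda\|u\|_{L_p}$ are all fine, but they do not repair this core gap.
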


\begin{proof}
First we assume $T=\infty$. By the method of continuity, it suffices to prove the a priori estimate \eqref{eq10.30} for $u\in C_0^\infty$. Let
\begin{equation}
 				\label{eq2.32}
f=P u-\lambda u.
\end{equation}
We make a change of variables:
$$
y_d=\varphi(x_d):=\int_0^{x_d} \frac 1 {a^{dd}(s)}\,ds,\quad y^j=x^j,\,\,j\ge 2.
$$
It is easy to see that $\varphi$ is a bi-Lipschitz function and
\begin{equation*}
\delta \le y_d/x_d\le \delta^{-1},\quad D_{y_d}=a^{dd}(x_d)D_{x_d}.
\end{equation*}
Denote
$$
v(t,y',y_d)=u(t,y',\varphi^{-1}(y_d)),\quad
\tilde a^{ij}(y_d)=a^{ij}(\varphi^{-1}(y_d)),
$$
$$
\tilde f(t,y)=f(t,y',\varphi^{-1}(y_d)).
$$
Define a divergence form operator $\tilde \cP$ by
$$
\tilde \cP v=-v_t+D_{1}\left(\frac 1 {\tilde a^{dd}} D_1 v\right)+\sum_{j=1}^{d-1} D_{j}\left(\frac {\tilde a^{dj}+\tilde a^{jd}}
{\tilde a^{dd}} D_1v
\right)+\sum_{i,j=1}^{d-1} D_{j}(\tilde a^{ij}D_iv).
$$
Clearly, $v$ satisfies in $\bR^{d+1}$
$$
\tilde \cP v-\lambda v=\tilde f,
$$
and $\tilde \cP$ is uniformly nondegenerate.
By Proposition \ref{prop1}, we have
\begin{equation*}
\lambda \| v \|_{L_p}
+ \sqrt\lambda\|Dv\|_{L_p}
\le N\|\tilde f\|_{L_p}.
\end{equation*}
Therefore,
\begin{equation}
					\label{eq2.25}
\lambda \|u \|_{L_p}
+ \sqrt\lambda\|Du\|_{L_p}
\le N\|f\|_{L_p}.
\end{equation}
Next we estimate $D^2 u$. Notice that for each $k=1,\ldots,d-1$, $D_kv$ satisfies
$$
\tilde \cP (D_k v)-\lambda D_k v=D_k \tilde f.
$$
Again by using Proposition \ref{prop1}, we get
$$
\| D_{yy^k} v \|_{L_p}
\le N\|\tilde f\|_{L_p},
$$
which implies
\begin{equation}
				\label{eq2.57}
\| D_{xx'} u \|_{L_p}
\le N\|f\|_{L_p}.
\end{equation}
Finally, to estimate $D_{dd} u$, we return to the equation in the original coordinates. From \eqref{eq2.32}, we see that $w:=D_d u$ satisfies
\begin{equation*}
-w_t+\Delta_{d-1}w+D_d(a^{dd}D_dw)-\lambda w=D_d f+\sum_{(i,j)\neq (d,d)}D_d
\left((\delta_{ij}-a^{ij})D_{ij}u\right).
\end{equation*}
We use Proposition \ref{prop1} again to get
\begin{equation}
				\label{eq2.36}
\| D_{dd} u \|_{L_p}\le \| Dw \|_{L_p}\le N\| f \|_{L_p}
+N\sum_{(i,j)\neq (d,d)} \| D_{ij}u \|_{L_p}.
\end{equation}
Combining \eqref{eq2.25}, \eqref{eq2.57} and \eqref{eq2.36} yields \eqref{eq10.30} by using
\begin{equation*}
u_t=a^{ij}D_{ij}u-\lambda u-f.
 \end{equation*}

For general $T\in (-\infty,\infty]$, we use the fact that $u=w$ for $t<T$, where $w\in
W_p^{1,2}$ solves
$$
P w-\lambda w=\chi_{t<T}(P u-\lambda u).
$$ The theorem is proved.
\end{proof}
In \cite{MR2833589}, the author extended the results to equations with ``hierarchically'' partially  BMO coefficients by using a scaling argument as in \cite{MR2601069, MR2584743, MR2771670}, but applied to nondivergence form equations.

Given the results mentioned above, it is natural to ask whether one can dispose the regularity of the coefficients in two space dimensions. Unfortunately, because of the counterexamples mentioned at the beginning of the introduction, one cannot expect an $L_p$ theory in this case for either divergence form equations or nondivergence form equations. This means that one can only get the results for a restricted range of $p$. In the divergence case, when $p$ is close to 2, the $W^1_p$ (or $\cH^1_p$) estimate and solvability follows from the usual energy argument and the reverse H\"older's inequality (Gehring's lemma). In the nondivergence case, when $d=2$ and $p=2+\varepsilon$, the $W^2_p$ estimate for elliptic equations with measurable coefficients can be found in Campanato \cite{Ca}. See also \cite{Kr70} for a result about parabolic equations under the condition that $a^{11}+a^{22}$ is a constant or a function depending only on $t$. In the higher dimensional case, we refer the reader to \cite{DKr10}, where $p=2+\varepsilon$ and the leading coefficients are assumed to be measurable in the time variable and two coordinates of space variables, and ``almost'' VMO with respect to the other coordinates.

We finish this section by a remark that for general elliptic (or parabolic) systems in nondivergence form, the $W^2_p$ (or $W^{1,2}_p$) is still an open problem when the coefficients are partially VMO. The argument in \cite{MR2896169,MR2833589} does not work for systems due to the use of the change of variables as shown in the proof of Theorem \ref{thm3.2}. Another interesting problem is the $L_p$ estimate for 1D parabolic equations in non-divergence form with merely measurable coefficients
\begin{equation}
                    \label{1Dpara}
u_t-a(t,x)u_{xx}=f,
\end{equation}
where $a(t,x)\in (\nu,\nu^{-1})$ is a measurable function on
$\bR^2$. For $p=2$, the solvability can be obtained simply by using integration by parts and the method of continuity (see, for example, \cite{MR2300337}). For $p$ sufficiently close to $2$ depending on the ellipticity constant, the answer is also positive, which can be deduced from the main result of \cite{DKr10}. On the other hand, in \cite{Kr16} Krylov presented examples which indicate that when $p \in (1,3/2)\cup (3,\infty)$, in both non-divergence and divergence cases there are equations which are not solvable. The proofs in \cite{Kr16} are based on estimates from below and above for solutions of the Cauchy problem with initial data being an indicator function of a small interval, and then a delicate construction of explicit solutions to the Ornstein--Uhlenbeck equations. The question whether we have the solvability of \eqref{1Dpara} when $p\in[3/2,2)\cup (2,3]$ is still open.

\section{Weighted estimates with Muckenhoupt weights} \label{sec4}

In this section, we showcase some results about solvability and estimates of elliptic and parabolic equations in weighted Sobolev spaces with Muckenhoupt $A_p$ weights. In particular, we will describe how to use a remarkable extrapolation theorem of Rubio de Francia \cite{MR745140} to get rid of the restriction $q\ge p$ in the mixed-norm estimate established in \cite{MR2352490}.

In \cite{DK18}, the authors established a generalized version of the Fefferman--Stein theorem on sharp functions with $A_p$ weights in spaces of homogeneous type with either finite or infinite underlying measure. Let us first fix some notation. Recall that a space $\cX$ of homogeneous type is equipped with a quasi-distance $\rho$ satisfying
\begin{equation*}
\rho(x,  y)  \le  K_1(\rho(x,  z)  + \rho(z,  y))
\end{equation*}
for some $K_1\ge 1$ and any $x, y, z \in \cX$, and $\rho(x, y)=0$ if and only if $x = y$,
and a doubling measure $\mu$, i.e., there  exists  a constant $K_2\ge 1$ such that for any $x\in \cX$ and  $r>0$,
\begin{equation*}
0 < \mu(B_{2r}(x)) \le K_2\mu(B_r(x)) < \infty.
\end{equation*}
We assume that the Lebesgue differentiation theorem is satisfied in $\cX$.
Christ \cite{MR1096400} showed that any such space $\cX$ admits a dyadic decomposition. More precisely,
for each $n\in\bZ$, there is a collection of disjoint open subsets $\bC_n:=\{Q_\alpha^n\,:\,\alpha\in I_n\}$ for some index set $I_n$, which satisfy the following properties
\begin{enumerate}
\item For any $n\in \bZ$, $\mu(\cX\setminus \bigcup_{\alpha}Q_\alpha^n)=0$;
\item For each $n$ and $\alpha\in I_n$, there is a unique $\beta\in I_{n-1}$ such that $Q_\alpha^n\subset Q_\beta^{n-1}$;
\item For each $n$ and $\alpha\in I_n$, $\operatorname{diam}(Q_\alpha^n)\le N_0\delta^n$;
\item Each $Q_\alpha^n$ contains some ball $B_{\varepsilon_0\delta^n}(z_\alpha^n)$
\end{enumerate}
for some constants $\delta\in (0,1)$, $\varepsilon_0>0$, and $N_0$ depending only on $K_1$ and $K_2$.

Denote $\tilde \cX=\bigcap_{n\in \bZ}\bigcup_{\alpha}Q_\alpha^n$. By properties (1) and (2) above, we have
$$
\mu(\cX\setminus \tilde \cX)=0,\quad \tilde \cX=\lim_{n\nearrow \infty}\bigcup_{\alpha}Q_\alpha^n.
$$
By properties (2), (3), and (4), we have
\begin{equation}
                                    \label{eq4.57}
\mu(Q_\beta^{n-1})\le N_1\mu(Q_\alpha^{n})
\end{equation}
for any $n$, $\alpha\in I_n$, $\beta\in I_{n-1}$ such that $Q_\alpha^n\subset Q_\beta^{n-1}$.

For a function $f\in L_{1,\text{loc}}(\cX,\mu)$ and $n\in \bZ$, we set
$$
f_{|n}(x)=\fint_{Q_\alpha^n}f(y)\,\mu(dy),
$$
where $x\in Q_\alpha^n \in \bC_n$.
For $x\in \tilde \cX$, we define the (dyadic) maximal function and sharp function of $f$ by
\begin{align*}
\cM_{\text{dy}} f(x)&=\sup_{n<\infty}|f|_{|n}(x),\\
f_{\text{dy}}^{\#}(x)&=\sup_{n<\infty}\fint_{Q_\alpha^n\ni x}|f(y)-f_{|n}(x)|\,\mu(dy).
\end{align*}
It is easily seen that
\begin{equation*}
\cM_{\text{dy}} f(x)\le N \cM f(x)\quad \text{and}\quad
f_{\text{dy}}^{\#}(x)\le Nf^{\#}(x)
\quad \mu\text{-a.e.},
\end{equation*}

For any $p\in (1,\infty)$, let $A_p=A_p(\mu)= A_p(\cX, d \mu)$ be the set of all nonnegative functions $w$ on $(\cX,\rho,\mu)$ such that
$$
[w]_{A_p}:=\sup_{x_0\in \cX,r>0}\left(\fint_{B_r(x_0)}w(x)\,d\mu\right)
\left(\fint_{B_r(x_0)}\big(w(x)\big)^{-\frac 1 {p-1}}\,d\mu\right)^{p-1}<\infty.
$$
By H\"older's inequality, we have
\begin{equation*}
A_p\subset A_q,\quad
1 \le
[w]_{A_q}\le [w]_{A_p},
\quad 1<p<q<\infty.
\end{equation*}
Denote
$$
A_\infty=\bigcup_{p=2}^\infty A_p.
$$
We write $f \in L_p(\cX, w \, d\mu)$ or simply $f \in L_p(w \, d \mu)$ if
$$
\int_\cX |f|^p w \, \mu(dx) = \int_\cX |f|^p w \, d\mu < \infty.
$$
We use $w(\cdot)$ to denote the measure $w(dx)=w\mu(dx)$, i.e., for $A \subset \cX$,
\begin{equation*}
w(A) = \int_A w(x) \, \mu(dx).
\end{equation*}

The following Hardy--Littlewood maximal function theorem with $A_p$ weights was obtained in \cite{MR0740173} by Aimar and Mac{\'{\i}}as.
\begin{theorem}
                                    \label{thm4.1}
Let $p\in (1,\infty)$, $w\in A_p$. Then for any $f\in L_p(w\,d\mu)$, we have
$$
\|\cM f\|_{L_p(w\,d\mu)}\le N\|f\|_{L_p(w\,d\mu)},
$$
where $N=N(K_1,K_2,p,[w]_{A_p})>0$.
If $[w]_{A_p}\le K_0$ for some $K_0\ge 1$, then one can choose $N$ depending only on $K_1$, $K_2$, $p$, and $K_0$.
\end{theorem}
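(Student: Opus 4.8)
The plan is to reduce the weighted bound to three standard ingredients on a space of homogeneous type: a Vitali-type covering lemma for $\rho$-balls, the Marcinkiewicz interpolation theorem (which holds with respect to an arbitrary measure), and --- the only genuinely nontrivial input --- the self-improvement of the Muckenhoupt class, i.e.\ a reverse H\"older inequality for $A_p$ weights on $\cX$. Granting reverse H\"older one gets $w\in A_{p_0}$ for some $p_0\in(1,p)$, and it then suffices to prove the weak-type $(p_0,p_0)$ bound for $\cM$ with respect to $w\,d\mu$ and interpolate it against the trivial $L_\infty$ bound.

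First I would record the covering lemma: from any family of $\rho$-balls of uniformly bounded radii one can extract a pairwise disjoint subfamily $\{B_k\}$ whose fixed dilates $K_1(2K_1+1)B_k$ cover the union; together with the doubling property of $\mu$ this is the usual substitute for the Vitali $5r$-lemma, with constants depending only on $K_1$ and $K_2$. Using this with the improved exponent $p_0$, the weak-type $(p_0,p_0)$ bound is obtained as in the Euclidean case. (Since $w\in A_p$ forces $w^{-1/(p-1)}\in L_{1,\text{loc}}(\mu)$, H\"older's inequality gives $f\in L_{1,\text{loc}}(\mu)$ for every $f\in L_p(w\,d\mu)$, so $\cM f$ is well defined.) One covers $\{\cM f>\lambda\}$ by balls $B$ on which $\fint_B|f|\,d\mu>\lambda$ (after a routine truncation making the radii bounded); H\"older's inequality with exponents $p_0$ and $p_0'=p_0/(p_0-1)$ together with the $A_{p_0}$ condition give, for every such ball, $\lambda^{p_0}w(B)\le[w]_{A_{p_0}}\int_B|f|^{p_0}w\,d\mu$; passing to a disjoint subfamily $\{B_k\}$, using that $w\,d\mu$ is doubling (a consequence of $w\in A_p$, since the $A_p$ condition forces $w(K_1(2K_1+1)B_k)\le C\,w(B_k)$), and summing over $k$ yields
\[
w(\{\cM f>\lambda\})\le \frac{C[w]_{A_{p_0}}}{\lambda^{p_0}}\int_\cX|f|^{p_0}\,w\,d\mu .
\]

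The heart of the argument is the reverse H\"older inequality: for $w\in A_p$ with $[w]_{A_p}\le K_0$, there are $\varepsilon>0$ and $C$, depending only on $K_1,K_2,p,K_0$, with $\big(\fint_B w^{1+\varepsilon}\,d\mu\big)^{1/(1+\varepsilon)}\le C\fint_B w\,d\mu$ for every $\rho$-ball $B$. I would prove this by a Calder\'on--Zygmund stopping-time argument on Christ's dyadic cubes $\bC_n$: fix $B$, work inside a dyadic cube comparable to it, and at each height $t>(w)_B$ select the maximal dyadic subcubes on which the average of $w$ exceeds $t$; the chain condition \eqref{eq4.57} controls the jump of the average across one generation, the $A_p$ inequality bounds the $\mu$-measure of each superlevel set by a fixed fraction of the previous one, and summing the resulting geometric series gives the exponential decay equivalent to reverse H\"older, with the Lebesgue differentiation theorem used to pass from dyadic averages to pointwise values. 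Applying the same inequality to $w^{-1/(p-1)}$, which lies in $A_{p'}$ with $[w^{-1/(p-1)}]_{A_{p'}}=[w]_{A_p}^{1/(p-1)}$, and then rearranging exponents produces $w\in A_{p_0}$ for a suitable $p_0=p_0(K_1,K_2,p,K_0)\in(1,p)$ with $[w]_{A_{p_0}}$ controlled by the same quantities. I expect this step to be the main obstacle; in particular, keeping the geometric constants honest in the quasi-metric $\rho$ (where balls need not be open and the triangle inequality carries the factor $K_1$) requires more care than in $\bR^n$, though the logical structure is exactly the classical one.

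To finish: $\cM$ maps $L_\infty(w\,d\mu)$ into itself with norm $1$, and by the previous paragraph it is of weak type $(p_0,p_0)$ with respect to $w\,d\mu$; since $p_0<p<\infty$, the Marcinkiewicz interpolation theorem applied with the measure $w\,d\mu$ gives $\norm{\cM f}_{L_p(w\,d\mu)}\le N\norm{f}_{L_p(w\,d\mu)}$. Every constant above is a function of $K_1,K_2,p$ and $[w]_{A_p}$ that is nondecreasing in $[w]_{A_p}$ (as $[w]_{A_p}$ grows, $\varepsilon$ shrinks and $p_0$ increases toward $p$), so the stated dependence $N=N(K_1,K_2,p,[w]_{A_p})$ holds, and the uniform statement under an a priori bound $[w]_{A_p}\le K_0$ is immediate.
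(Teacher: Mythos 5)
The paper does not give a proof of Theorem \ref{thm4.1}; it attributes the result to Aimar and Mac\'{\i}as \cite{MR0740173} and, for the self-improving property of $A_p$ on spaces of homogeneous type, separately cites Mac\'{\i}as--Segovia \cite[Theorem 3.2]{MS1981}. Your proposal is therefore a proof of a cited result rather than an alternative to a proof in the paper, but it is worth reviewing on its own terms.

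Your three-step route --- (i) self-improvement $A_p\subset A_{p_0}$, $p_0<p$, via a reverse H\"older inequality; (ii) the weak-type $(p_0,p_0)$ bound from Vitali covering, H\"older with $A_{p_0}$, and the $A_p$-doubling of $w\,d\mu$; (iii) Marcinkiewicz interpolation against the trivial $L_\infty$ bound --- is exactly the classical Muckenhoupt argument, and it is also the route taken in the literature the paper cites. The pointwise computation $\lambda^{p_0}w(B)\le[w]_{A_{p_0}}\int_B|f|^{p_0}w\,d\mu$ is correct, as is the reduction of openness of $A_p$ to reverse H\"older for $w^{-1/(p-1)}\in A_{p'}$ with $[w^{-1/(p-1)}]_{A_{p'}}=[w]_{A_p}^{1/(p-1)}$, and the dependence $N=N(K_1,K_2,p,[w]_{A_p})$ tracked through $\varepsilon$, $p_0$, and the interpolation constant is right, including the uniform version under $[w]_{A_p}\le K_0$.

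The one place where your sketch leaves a real gap is the passage in the reverse H\"older step from Christ dyadic cubes to $\rho$-balls. On a space of homogeneous type a ball $B$ need not lie inside a single dyadic cube of comparable generation, so ``fix $B$, work inside a dyadic cube comparable to it'' does not quite parse. Two standard fixes are available: (a) prove reverse H\"older directly for cubes $Q\in\bC_n$, then recover it for $B$ by covering $B$ with a bounded number of cubes $Q_i$ of comparable generation contained in a fixed dilate $CB$, using that $w\,d\mu$ is doubling --- note this doubling is a direct consequence of the $A_p$ inequality (apply it with $E=B\subset 2B$) and is therefore available before $A_\infty$ or reverse H\"older is established; or (b) run the Calder\'on--Zygmund stopping-time argument directly with balls via the Vitali lemma, avoiding dyadic cubes altogether. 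Either fix is routine, but one of them must be said. A second, minor point worth a word: on a quasi-metric space $\rho$-balls need not be open, so measurability of $\{\cM f>\lambda\}$ is not automatic; one either restricts the supremum to a countable dense family or passes to the Mac\'{\i}as--Segovia equivalent quasi-metric for which balls are open --- the paper implicitly handles such issues by postulating the Lebesgue differentiation theorem and working with the dyadic maximal function $\cM_{\text{dy}}$ where possible.
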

Note that in Theorem \ref{thm4.1}, $\mu(\cX)$ can be either finite or infinite, and $\cX$ is allowed to be a bounded space with respect to $\rho$. However, for the Fefferman--Stein theorem, to bound the $L_p$ norm of $f$ by that of $f^\#$ it is crucial that the underlying measure is infinite because otherwise we have nonzero constant functions as a simple counterexample. One of the main results of \cite{DK18} is the following theorem
on sharp functions with $A_\infty$ weights.

\begin{theorem}[Theorem 2.3 of \cite{DK18}]
                                    \label{thm4.2}
Let $p,q \in (1, \infty)$, $\varepsilon>0$, $K_0 \ge 1$, $w\in A_q$, $[w]_{A_q}\le K_0$, and $f\in L_p(w\,d\mu)\cap L_{1,\text{loc}}(d\mu)$.

\begin{enumerate}
\item[(i)] When $\mu(\cX)<\infty$, we have
\begin{equation*}
\|f\|_{L_p(w\,d\mu)}\le N\|f_{\operatorname{dy}}^{\#}\|_{L_p(w\,d\mu)}+
N(\mu(\cX))^{-1}\big(w(\operatorname{supp} f)\big)^{\frac 1 p}\|f\|_{L_1(\mu)}.
\end{equation*}
If in addition we assume that $\operatorname{supp} f \subset B_{r_0}(x_0)$ and $\mu(B_{r_0}(x_0))\le \varepsilon\mu(\cX)$ for some $r_0\in (0,\infty)$ and $x_0\in \cX$, then
\begin{equation*}
\|f\|_{L_p(w\,d\mu)}\le N\|f_{\operatorname{dy}}^{\#}\|_{L_p(w\,d\mu)}+
N\varepsilon \|f\|_{L_p(w\,d\mu)}.
\end{equation*}
Here $N=N(K_1,K_2,p,q,K_0)>0$. In particular,  when $\varepsilon$ is sufficiently small depending on $K_1$, $K_2$, $p$, $q$, $K_0$, it holds that
\begin{equation}
                            \label{eq1.54}
\|f\|_{L_p(w\,d\mu)}\le N\|f_{\operatorname{dy}}^{\#}\|_{L_p(w\,d\mu)}.
\end{equation}

\item[(ii)] When $\mu(\cX)=\infty$, \eqref{eq1.54} always holds.
\end{enumerate}
\end{theorem}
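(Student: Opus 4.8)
The plan is to prove Theorem \ref{thm4.2} by reducing everything to the dyadic structure supplied by Christ's decomposition and then running a Calder\'on--Zygmund stopping-time argument adapted to the weight $w$. First I would fix $\lambda>0$ large (to be chosen after we know the ambient total measure) and, for each $n\in\bZ$, form the stopping-time set $E_n^\lambda=\bigl\{x\in\tilde\cX:\cM_{\operatorname{dy}}^{<n}|f|(x)\le\lambda<\cM_{\operatorname{dy}}|f|(x)\bigr\}$, where $\cM_{\operatorname{dy}}^{<n}$ is the truncated maximal function using only cubes of generation $<n$; equivalently, select the maximal dyadic cubes $Q\in\bigcup_m\bC_m$ on which $\fint_Q|f|\,d\mu>\lambda$ and let $\{Q_j\}$ be this (disjoint) family. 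The standard facts are that $|f|\le\lambda$ $\mu$-a.e.\ off $\bigcup_jQ_j$ (Lebesgue differentiation, which we have assumed), that $\lambda<\fint_{Q_j}|f|\,d\mu\le N_1\lambda$ by \eqref{eq4.57} applied to the dyadic parent, and that $\mu(\bigcup_jQ_j)\le\lambda^{-1}\|f\|_{L_1(\mu)}$. When $\mu(\cX)=\infty$ there are no issues with cubes being ``too big''; when $\mu(\cX)<\infty$ we only run the selection for $\lambda$ larger than $\mu(\cX)^{-1}\|f\|_{L_1(\mu)}$, which is exactly the source of the additive error term in part (i).

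Next I would prove the good-$\lambda$ inequality
\begin{equation*}
w\bigl(\{\cM_{\operatorname{dy}}|f|>N_1\beta\lambda,\ f_{\operatorname{dy}}^{\#}\le\gamma\lambda\}\bigr)\le N\gamma^{\kappa}\,w\bigl(\{\cM_{\operatorname{dy}}|f|>\lambda\}\bigr)
\end{equation*}
for a suitable $\beta>N_1$, all small $\gamma>0$, and some exponent $\kappa=\kappa(K_1,K_2,q,K_0)>0$. This is where the $A_q$ (hence $A_\infty$) property of $w$ enters: on each selected cube $Q_j$ one controls the $\mu$-measure of the subset where $\cM_{\operatorname{dy}}|f|$ is large while $f_{\operatorname{dy}}^{\#}$ is small by a power of $\gamma$ times $\mu(Q_j)$ (a purely dyadic Calder\'on--Zygmund argument inside $Q_j$, using that the average of $f$ over the parent is comparable to $\lambda$), and then the quantitative $A_\infty$ condition converts the $\mu$-smallness into $w$-smallness: if $S\subset Q_j$ with $\mu(S)\le\gamma^{\kappa_0}\mu(Q_j)$ then $w(S)\le N\gamma^{\kappa}w(Q_j)$, with constants depending only on $K_1,K_2,q,K_0$. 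Summing over $j$ and using $\bigcup_jQ_j\subset\{\cM_{\operatorname{dy}}|f|>\lambda\}$ gives the displayed inequality. Integrating it against $p\lambda^{p-1}\,d\lambda$ in the usual way — and invoking the weighted maximal function bound, Theorem \ref{thm4.1}, together with $\cM_{\operatorname{dy}}\le N\cM$ to pass from $\|\cM_{\operatorname{dy}}|f|\|_{L_p(w)}$ back to $\|f\|_{L_p(w)}$ — yields $\|f\|_{L_p(w\,d\mu)}\le N\|f_{\operatorname{dy}}^{\#}\|_{L_p(w\,d\mu)}$ in the infinite-measure case, which is part (ii) and \eqref{eq1.54}.

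For part (i), the only change is that the stopping time cannot be started below the threshold $\lambda_\ast:=\mu(\cX)^{-1}\|f\|_{L_1(\mu)}$, because any cube large enough has average at most $\lambda_\ast$. Splitting $\int_0^\infty=\int_0^{\lambda_\ast}+\int_{\lambda_\ast}^\infty$ in the layer-cake formula, the high range is handled exactly as above, while the low range contributes at most $N\int_0^{\lambda_\ast}\lambda^{p-1}w(\{\cM_{\operatorname{dy}}|f|>\lambda\})\,d\lambda\le N\lambda_\ast^p\,w(\operatorname{supp} f)$, since $\cM_{\operatorname{dy}}|f|$ vanishes off $\operatorname{supp} f$ up to a $\mu$-null set; this is the term $N(\mu(\cX))^{-1}\bigl(w(\operatorname{supp} f)\bigr)^{1/p}\|f\|_{L_1(\mu)}$ after taking $p$-th roots. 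Under the extra hypothesis $\operatorname{supp} f\subset B_{r_0}(x_0)$ with $\mu(B_{r_0}(x_0))\le\varepsilon\mu(\cX)$, one instead bounds the low-range error by $N\lambda_\ast^{p-1}\|f\|_{L_1(\mu)}\,w(B_{r_0}(x_0))/w(\text{something})$; more cleanly, one uses H\"older with exponent $p$ and the doubling/$A_q$ comparison to get $\lambda_\ast^p w(\operatorname{supp} f)\le N\varepsilon^{p}\|f\|_{L_p(w\,d\mu)}^p$ after absorbing, whence the stated $N\varepsilon\|f\|_{L_p(w\,d\mu)}$ bound, and taking $\varepsilon$ small absorbs it into the left side to give \eqref{eq1.54}.

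The main obstacle I expect is the quantitative conversion step, i.e.\ proving that the constant in the good-$\lambda$ inequality, and in particular the exponent $\kappa$, depends on $w$ only through the bound $K_0$ on $[w]_{A_q}$ and not on $w$ itself — this is precisely what makes the theorem usable for extrapolation. This requires a \emph{quantitative} reverse-H\"older / $A_\infty$ estimate in a general space of homogeneous type (the implication ``$\mu(S)/\mu(Q)$ small $\Rightarrow$ $w(S)/w(Q)$ small, with rate controlled by $[w]_{A_q}$''), which one gets from the self-improving property of $A_q$ weights on dyadic cubes; carrying this out with constants tracked, and making sure the dyadic Calder\'on--Zygmund step inside each $Q_j$ really only loses a fixed power of $\gamma$, is the technical heart of the argument. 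The bookkeeping separating the finite- and infinite-measure cases, and isolating exactly the error terms stated in (i), is the other place where care is needed but is otherwise routine.
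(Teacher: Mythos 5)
The overall skeleton of your proposal---a dyadic Calder\'on--Zygmund stopping-time argument with a threshold $\lambda_0 \sim \mu(\cX)^{-1}\|f\|_{L_1(\mu)}$, a layer-cake formula split at that threshold, and a quantitative $A_\infty$ property to convert $\mu$-smallness inside stopping cubes into $w$-smallness---is indeed the route the paper takes (this is exactly what the paper sketches immediately after Theorem \ref{thm4.3}). However, there is a genuine gap in how you propose to close the argument, and it traces back to the hypotheses: the theorem assumes only $w\in A_q$ with $p,q\in(1,\infty)$ \emph{independent}, not $w\in A_p$. You frame the key step as a good-$\lambda$ inequality for $\cM_{\operatorname{dy}}|f|$ and then propose to ``pass from $\|\cM_{\operatorname{dy}}|f|\|_{L_p(w)}$ back to $\|f\|_{L_p(w)}$'' via Theorem \ref{thm4.1}. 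But Theorem \ref{thm4.1} requires $w\in A_p$; when $q>p$ a weight in $A_q$ need not lie in $A_p$, the maximal operator need not be bounded on $L_p(w\,d\mu)$, and $\|\cM_{\operatorname{dy}}|f|\|_{L_p(w\,d\mu)}$ can be infinite even though $f\in L_p(w\,d\mu)$. This matters precisely for the absorption step after integrating your good-$\lambda$ inequality, which needs $\|\cM_{\operatorname{dy}}|f|\|_{L_p(w)}^p$ to be finite a priori. The paper's sketch instead estimates $w\{x:|f(x)|\ge\lambda\}$ directly, so the layer-cake produces $\|f\|_{L_p(w\,d\mu)}^p$ on the left---finite by assumption---and absorption is safe.

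The same $A_q$-versus-$A_p$ issue resurfaces in your treatment of the second estimate of part (i): ``H\"older with exponent $p$'' to bound $\|f\|_{L_1(\mu)}$ by $\|f\|_{L_p(w\,d\mu)}\bigl(\int_{\operatorname{supp}f}w^{-1/(p-1)}\,d\mu\bigr)^{(p-1)/p}$ requires $w^{-1/(p-1)}$ to be locally integrable, which is what $A_p$ provides but $A_q$ with $q>p$ does not. You seem aware the step is fragile (``$/w(\text{something})$'', ``more cleanly\ldots''), but as written it does not close, and the way the constant is allowed to depend on $q$ is not accounted for. To repair the proposal you should either run the good-$\lambda$ estimate on the level sets of $|f|$ itself rather than of $\cM_{\operatorname{dy}}|f|$ (as the paper does), or work with a truncated maximal function and pass to the limit; in either case the $A_\infty$/reverse-H\"older constants must be tracked as depending only on $K_1,K_2,q,K_0$, since $[w]_{A_p}$ is not available.
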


A similar result was obtained in \cite[Theorem 4.2]{MR2033231} in the sense that both theorems deal with spaces of homogeneous type with a finite or infinite underlying measure. In the case of a finite underlying measure, the above theorem is more quantitative than Theorem 4.2 in \cite{MR2033231} and is stated in such a way that one can control the weighted $L_p$ or $L_{p,q}$-norm of a function $f$ by that of $f^{\#}_{\text{dy}}$ if the support of $f$ is sufficiently small. This is particularly useful when equations in bounded domains are considered. In fact, a more general result was proved, which also improves Theorem \ref{generalStein} and is handy when one treats equations with variably partially VMO coefficients.

\begin{theorem}[Theorem 2.4 of \cite{DK18}]							\label{thm4.3}
Let $p,q \in (1, \infty)$, $\varepsilon>0$, $K_0 \ge 1$, $w\in A_q$, and $[w]_{A_q}\le K_0$.
Suppose that
$$
f, g, v\in L_p(w\,d\mu)\cap L_{1,\text{loc}}(d\mu),\quad |f| \le v,
$$
and for each $n \in \bZ$ and $Q \in \bC_n$,
there exists a measurable function $f^Q$ on $Q$
such that $|f| \le f^Q \le v$ on $Q$ and
\begin{equation*}
\fint_Q |f^Q(x) - \left(f^Q\right)_Q| \,\mu(dx)
\le g(y)\quad \forall \,y\in Q.
\end{equation*}

\begin{enumerate}
\item[(i)] When $\mu(\cX)<\infty$, we have
\begin{equation*}
\|f\|^p_{L_p(w\,d\mu)}\le N\|g\|^\beta _{L_p(w\,d\mu)}\|v \|_{L_p(w\,d\mu)}^{p-\beta}+
N(\mu(\cX))^{-p} \omega (\operatorname{supp} v)\|v\|^p_{L_1(\mu)}.
\end{equation*}
If in addition we assume that $\operatorname{supp} v \subset B_{r_0}(x_0)$ and $\mu(B_{r_0}(x_0))\le \varepsilon\mu(\cX)$ for some $r_0\in (0,\infty]$ and $x_0\in \cX$, then
\begin{equation}
                                \label{eq12.09}
\| f \|_{L_p(w\,d\mu)}^p
\le N\|g\|^\beta _{L_p(w\,d\mu)}\|v \|_{L_p(w\,d\mu)}^{p-\beta}
+N\varepsilon^p\|v \|_{L_p(w\,d\mu)}^{p}.
\end{equation}
Here $(\beta, N) =  (\beta,N)(K_1,K_2,p,q,K_0)>0$.

\item[(ii)] When $\mu(\cX)=\infty$, \eqref{eq12.09} holds with $\varepsilon=0$.
\end{enumerate}
\end{theorem}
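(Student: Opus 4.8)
The plan is to prove Theorem \ref{thm4.3} by reducing it to Theorem \ref{thm4.2}, i.e.\ to the weighted dyadic Fefferman--Stein inequality already in hand, via a level-set / stopping-time argument applied to a cleverly chosen auxiliary function. The essential point is that the hypothesis does \emph{not} give us a single function whose sharp function is controlled by $g$; instead we are handed, at every dyadic scale $n$ and every cube $Q\in\bC_n$, a separate ``good representative'' $f^Q$ squeezed between $|f|$ and $v$. So the first task is to manufacture, for each $n$, a single function out of the family $\{f^Q:Q\in\bC_n\}$ by setting $F_n(x)=f^{Q}(x)$ when $x\in Q\in\bC_n$, and then to understand the dyadic sharp function of $f$ in terms of these.

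The key steps, in order, are as follows. \textbf{Step 1.} Fix a scale $n$ and a cube $Q\in\bC_n$. For $x\in Q$ we estimate $|f(x)-f_{|n}(x)|$ where $f_{|n}$ is the average of $f$ over $Q$. Write $f=F_n$ wherever possible: since $|f|\le f^Q=F_n\le v$ on $Q$ and the averages $\left(f^Q\right)_Q$ satisfy $\fint_Q|f^Q-(f^Q)_Q|\,\mu(dx)\le g(y)$ for all $y\in Q$, a short computation using $\bigl|\fint_Q f\,\mu - \fint_Q F_n\,\mu\bigr|\le \fint_Q(v-|f|)\,\mu(dx)\le \fint_Q(v+|f|)\,\mu(dx)$ controls the discrepancy by a combination of $g$ and an ``$v$ versus $|f|$'' defect term. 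The upshot is a pointwise bound of the form
\begin{equation*}
f_{\operatorname{dy}}^{\#}(x)\le N\,g(x)+N\,\bigl(\cM_{\operatorname{dy}}(v-|f|)\bigr)(x)+\cdots
\end{equation*}
but this is too lossy because $\|v-|f|\|$ need not be small; so instead one argues directly on level sets. \textbf{Step 2 (stopping time).} Fix $\lambda>0$. Apply the Calder\'on--Zygmund / Christ-cube stopping-time decomposition of $\cX$ adapted to $|f|$ at height $\lambda$: select the maximal dyadic cubes $Q_j$ on which $\fint_{Q_j}|f|\,\mu>\lambda$. On each such $Q_j$ use the designated $f^{Q_j}$: because $|f|\le f^{Q_j}\le v$ and $\fint_{Q_j}|f^{Q_j}-(f^{Q_j})_{Q_j}|\le g$, one obtains, by the usual good-$\lambda$ bookkeeping, that the super-level set $\{|f|>C\lambda\}$ is captured up to a set on which either $\cM_{\operatorname{dy}}g$ or $v$ is comparably large, with the measure controlled using the doubling of $\mu$ and \eqref{eq4.57}. \textbf{Step 3.} Integrate the resulting good-$\lambda$ inequality against $w\,d\mu$ — legitimately, since $w\in A_q$ so the weighted Hardy--Littlewood maximal theorem, Theorem \ref{thm4.1}, applies and $\cM_{\operatorname{dy}}\le N\cM$ — and use H\"older with the split exponent to produce the geometric-mean right-hand side $\|g\|_{L_p(w\,d\mu)}^\beta\|v\|_{L_p(w\,d\mu)}^{p-\beta}$. \textbf{Step 4 (the finite-measure case).} When $\mu(\cX)<\infty$ the coarsest cubes are all of $\cX$, so the stopping-time selection can fail to start; this forces the extra term $N(\mu(\cX))^{-p}w(\operatorname{supp} v)\|v\|_{L_1(\mu)}^p$, exactly as the unweighted constant obstructs Fefferman--Stein on a finite space. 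The refinement — replacing that term by $N\varepsilon^p\|v\|_{L_p(w\,d\mu)}^p$ under $\mu(B_{r_0}(x_0))\le\varepsilon\mu(\cX)$ — comes from estimating $(\mu(\cX))^{-1}\|v\|_{L_1(\mu)}$ by H\"older on $\operatorname{supp} v\subset B_{r_0}(x_0)$ together with the $A_q$ comparison of $w\,d\mu$ and $d\mu$ on balls; when $\varepsilon$ is small the term is absorbed, giving \eqref{eq12.09} with a genuine constant.

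The main obstacle, I expect, is Step 2: making the stopping-time argument interact correctly with the \emph{family} $\{f^Q\}$ rather than a fixed function. One must be careful that when a stopping cube $Q_j$ is selected, the inequality $\fint_{Q_j}|f^{Q_j}-(f^{Q_j})_{Q_j}|\le g(y)$ is used at the right points $y$ (all of $Q_j$), and that on the complement of $\bigcup_j Q_j$ the Lebesgue differentiation theorem gives $|f|\le\lambda$ $\mu$-a.e., which requires the standing assumption that differentiation holds in $\cX$. A secondary technical point is that the constant $\beta\in(0,p)$ and $N$ must be kept independent of $w$ beyond its $A_q$-characteristic bound $K_0$; this is automatic once one invokes the quantitative form of Theorem \ref{thm4.1}. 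Everything else — the passage $\cM_{\operatorname{dy}}\le N\cM$, the doubling estimates, the final H\"older split — is routine, and the finite-versus-infinite dichotomy is handled exactly as in the proof of Theorem \ref{thm4.2}, of which this is the natural strengthening.
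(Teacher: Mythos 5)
Your overall framework --- stopping time on Christ cubes, good-$\lambda$ estimate, integration with the finite/infinite threshold $\lambda_0$, $A_q$ comparability to handle the weight --- is the right one and matches the paper's. But Step 2 contains a genuine gap: you run the stopping time on $|f|$, and that choice does not close. On a maximal cube $Q_j$ with $(\,|f|\,)_{Q_j} > \lambda$, the parent cube and \eqref{eq4.57} give only $(\,|f|\,)_{Q_j}\le N_1\lambda$, which is an upper bound on the \emph{wrong} quantity. The Chebyshev step requires an upper bound on $(f^{Q_j})_{Q_j}$, since
$\{|f|>\lambda\}\cap Q_j\subset\{|f^{Q_j}-(f^{Q_j})_{Q_j}|>\lambda-(f^{Q_j})_{Q_j}\}\cap Q_j$,
and the only thing bounding $(f^{Q_j})_{Q_j}$ from above is $(f^{Q_j})_{Q_j}\le (v)_{Q_j}$, which the stopping time on $|f|$ does not control at all (we only know $(f^{Q_j})_{Q_j}\ge (\,|f|\,)_{Q_j}>\lambda$, the useless direction). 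Your own phrasing --- that the bad set is captured where ``$\cM_{\operatorname{dy}}g$ \emph{or} $v$ is comparably large'' --- is the symptom: you are forced into an extra case $(v)_{Q_j}\gg\lambda$, and that case contributes a term of size $\|v\|_{L_p(w\,d\mu)}^p$ with a fixed constant, so the best you can extract is $\|f\|^p\lesssim \|g\|^p+\|v\|^p$, which is strictly weaker than the geometric-mean bound $\|g\|^\beta\|v\|^{p-\beta}$ and useless for absorption when $\varepsilon=0$.

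The fix, and what the paper does, is to run the stopping time on the nonnegative dominant $v$: take $\{Q_j\}$ maximal with $(v)_{Q_j}>\alpha\lambda$. Then maximality plus \eqref{eq4.57} give $(v)_{Q_j}\le N_1\alpha\lambda$, hence $(f^{Q_j})_{Q_j}\le (v)_{Q_j}\le N_1\alpha\lambda$; outside $\bigcup_j Q_j$, Lebesgue differentiation gives $v\le\alpha\lambda$ so $|f|\le\alpha\lambda<\lambda$. Taking $\alpha=(2N_1)^{-1}$ and Chebyshev with $\fint_{Q_j}|f^{Q_j}-(f^{Q_j})_{Q_j}|\le\inf_{Q_j} g$ gives $\mu(\{|f|>\lambda\}\cap Q_j)\le (2/\lambda)\,\mu(Q_j)\inf_{Q_j}g$. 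Now split on whether $\inf_{Q_j}g\ge c\lambda$ (giving $Q_j\subset\{g\ge c\lambda\}$) or not (giving small $\mu$-density, hence small $w$-density via the $A_\infty$ power-comparability for $w\in A_q$). Summing and integrating $p\lambda^{p-1}\,d\lambda$ over $(\lambda_0,\infty)$ yields $\|f\|^p\lesssim c^{\delta'}\|v\|^p+c^{-p}\|g\|^p + (\text{threshold term})$; the geometric mean $\|g\|^\beta\|v\|^{p-\beta}$ then comes from optimizing the free parameter $c$, not from a H\"older split per se. Your Steps 3--4 (Hardy--Littlewood with $A_q$, the $\lambda\in(0,\lambda_0]$ tail giving $(\mu(\cX))^{-p}w(\operatorname{supp} v)\|v\|_{L_1(\mu)}^p$, and its refinement to $\varepsilon^p\|v\|_{L_p(w\,d\mu)}^p$) are sound once Step 2 is repaired this way.
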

The proofs of Theorems \ref{thm4.2} and \ref{thm4.3} use a usual stopping time argument, but with a threshold
\begin{equation*}
\lambda_0 =
\left\{\begin{aligned}
2 N_1 \|f\|_{L_1(\mu)} \left(\mu(\cX)\right)^{-1}
\quad &\text{if} \quad \mu(\cX) < \infty,
\\
0 \quad &\text{if} \quad \mu(\cX) = \infty,
\end{aligned}
\right.
\end{equation*}
where $N_1$ is a constant in \eqref{eq4.57}. For a nonnegative $f\in L_{1,\text{loc}}$, we define the stopping time:
$$
\tau(x) = \inf_{n \in \bZ} \{ n : f_{|n}(x) > \alpha \lambda \},
$$
where $\lambda>\lambda_0$. This allows us to estimate the measure of the super level set $w\{x\,:\,|f(x)|\ge \lambda\}$ by using the Calder\'on--Zygmund decomposition when $\lambda>\lambda_0$. When $\lambda\in (0,\lambda_0]$, we estimate the above measure simply by $w(\text{supp}\ f)$. We refer the reader to \cite{DK18} for details.

From Theorems \ref{thm4.1}-\ref{thm4.3}, the following mixed-norm results were derived by using the extrapolation theorem of Rubio de Francia, which roughly speaking read that if a pair of function $(f,g)$ satisfies
$$
\|f\|_{L_{p}(w\ d\mu)}\le N\|f\|_{L_{p}(w\ d\mu)}
$$
for some $p\in (1,\infty)$ and any $w\in A_p$. Then it satisfies
$$
\|f\|_{L_{q}(w\ d\mu)}\le N\|f\|_{L_{q}(w\ d\mu)}
$$
for {\em any} $q\in (1,\infty)$ and any $w\in A_q$. See \cite{MR745140} as well as \cite[Theorem 2.5]{DK18} for the precise statement.

We write $x=(x',x'')$, and let $(\cX',\rho_1,\mu_1)$ and $(\cX'',\rho_2,\mu_2)$ be two spaces of homogeneous type. Define $\mu$ to be the completion of the product measure on $\cX'\times \cX''$ and
$$
\rho((x',x''), (y',y''))=\rho_1(x',y')+\rho_2(x'',y'')
$$
be a quasi-metric on $\cX'\times \cX''$. Let $\cX\subset \cX'\times \cX''$ such that $(\cX,\rho|_{\cX\times \cX},\mu|_\cX)$ is of homogeneous type. Assume that for any $p\in (1,\infty)$, $w_1=w_1(x')\in A_p(\mu_1)$, and $w_2=w_2(x'')\in A_p(\mu_2)$,
$w=w(x):=w_1(x')w_2(x'')$ is an $A_p$ weight on $(\cX,\rho|_{\cX\times \cX},\mu|_\cX)$ and
$$
[w]_{A_p}\le N([w_1]_{A_p},[w_2]_{A_p}).
$$
We remark that this condition is always satisfied, for example, if $\cX$ satisfies the interior measure condition in $\cX'\times \cX''$.

For any $p,q\in (1,\infty)$ and weights $w_1=w_1(x')$ and $w_2=w_2(x'')$, we define the weighted mixed norm on $\cX$ by
\begin{equation*}
\begin{aligned}
&\|f\|_{L_{p,q,w}(d\mu)}=\|f\|_{L_{p,q}(w\,d\mu)}\\
&:=\|f\|_{L_{p,q}(\cX,w\,d\mu)}=\left(\int_{\cX''}\big(\int_{\cX'} |f|^p I_{\cX} w_1(x')\,\mu_1(dx')\big)^{q/p}w_2(x'')\,\mu_2(dx'')\right)^{1/q}.
\end{aligned}
\end{equation*}

\begin{corollary}[Corollary 2.6 of \cite{DK18}]
                                    \label{cor1}
Let $p,q\in (1,\infty)$, $K_0 \ge 1$, $w_1=w_1(x')\in A_p(\mu_1)$, $w_2=w_2(x'')\in A_q(\mu_2)$,
$[w_1]_{A_p}\le K_0$, $[w_2]_{A_q}\le K_0$, and $w=w(x)=w_1(x')w_2(x'')$. Then for any $f\in L_{p,q}(w\,d\mu)$, we have
$$
\|\cM f\|_{L_{p,q}(w\,d\mu)}\le N\|f\|_{L_{p,q}(w\,d\mu)},
$$
where $N=N(K_1,K_2,p,q,K_0)>0$.
\end{corollary}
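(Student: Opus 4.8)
The plan is to deduce the mixed‑norm bound from the scalar weighted maximal inequality of Theorem~\ref{thm4.1} together with the Rubio de Francia extrapolation theorem, the point being that one only has to extrapolate in the second (``$q$'') variable, after first disposing of the whole operator at the diagonal exponent.

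First I would record the diagonal case $q=p$. Fix $p\in(1,\infty)$ and $w_1\in A_p(\mu_1)$, and let $v$ be an arbitrary weight in $A_p(\mu_2)$; by the standing hypothesis the function $(x',x'')\mapsto w_1(x')v(x'')$, which I abbreviate $w_1 v$, is an $A_p$ weight on $(\cX,\rho|_\cX,\mu|_\cX)$ with $[w_1 v]_{A_p}\le N([w_1]_{A_p},[v]_{A_p})$. Hence Theorem~\ref{thm4.1} applied on $\cX$ gives
\[
\|\cM f\|_{L_p((w_1 v)\,d\mu)}\le N\,\|f\|_{L_p((w_1 v)\,d\mu)},
\]
with $N$ controlled by $K_1$, $K_2$, $p$, and upper bounds for $[w_1]_{A_p}$ and $[v]_{A_p}$. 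Since $L_p((w_1 v)\,d\mu)=L_{p,p}(w_1 v)$, this is exactly the assertion of the corollary in the case $q=p$, with $v$ in the role of $w_2$.

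Next I would upgrade to general $q$ by extrapolating in the $x''$ variable only. Keeping $w_1\in A_p(\mu_1)$ fixed, set, for functions on $\cX''$,
\[
F(x'')=\|\cM f(\cdot,x'')\|_{L_p(\cX',\,w_1\,d\mu_1)},\qquad G(x'')=\|f(\cdot,x'')\|_{L_p(\cX',\,w_1\,d\mu_1)},
\]
so that $\|\cM f\|_{L_{p,q}(w\,d\mu)}=\|F\|_{L_q(\cX'',\,w_2\,d\mu_2)}$ and $\|f\|_{L_{p,q}(w\,d\mu)}=\|G\|_{L_q(\cX'',\,w_2\,d\mu_2)}$. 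By Fubini, the displayed diagonal estimate says precisely that
\[
\|F\|_{L_p(v\,d\mu_2)}\le N([v]_{A_p})\,\|G\|_{L_p(v\,d\mu_2)}\quad\text{for every }v\in A_p(\mu_2),
\]
with $N$ increasing in $[v]_{A_p}$. Applying the Rubio de Francia extrapolation theorem (cf. \cite{MR745140} and \cite[Theorem~2.5]{DK18}) on the space of homogeneous type $\cX''$ to the pair $(F,G)$, this upgrades to
\[
\|F\|_{L_q(v\,d\mu_2)}\le N([v]_{A_q})\,\|G\|_{L_q(v\,d\mu_2)}\quad\text{for all }q\in(1,\infty),\ v\in A_q(\mu_2).
\]
Taking the $q$ of the statement and $v=w_2$ yields $\|\cM f\|_{L_{p,q}(w\,d\mu)}\le N\|f\|_{L_{p,q}(w\,d\mu)}$, and tracking the constants through the quantitative form of Theorem~\ref{thm4.1} and through the extrapolation theorem produces $N=N(K_1,K_2,p,q,K_0)$.

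The one genuinely delicate point, besides the routine check that $F$ and $G$ may be assumed a priori to lie in $L_q(w_2\,d\mu_2)$ (arrange this by truncating $f$ to $f\mathbf{1}_{\{|f|\le M\}}\mathbf{1}_{B_R}$ and letting $M,R\to\infty$ by monotone convergence, using monotonicity of $\cM$), is the structural observation that the two‑variable operator $\cM$ should not be attacked slice by slice: had one tried to reduce to the one‑dimensional maximal functions $\cM_1$ and $\cM_2$ in the two groups of variables, the inner one is indeed immediate from Fubini and Theorem~\ref{thm4.1}, but the outer one forces a vector‑valued maximal inequality. The argument above sidesteps this by handling the full operator at the single exponent $p$—exploiting that products of $A_p$ weights are $A_p$ on $\cX$—and only afterwards extrapolating in one variable from $p$ to $q$.
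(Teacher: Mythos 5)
Your argument is correct and is essentially the same as the paper's: Theorem~\ref{thm4.1} gives the diagonal case $q=p$ because the product-weight hypothesis guarantees $w_1v\in A_p(\mu)$ with controlled constant, and then one passes to general $q$ by applying Rubio de Francia extrapolation to the sliced pair $(F,G)$ on $\cX''$, which is exactly how the paper states that Corollary~\ref{cor1} is derived from Theorem~\ref{thm4.1}. Your closing observation — that a naive iteration $\cM\lesssim\cM_1\cM_2$ would force a weighted vector-valued maximal inequality, so it is better to handle the full operator at one exponent and extrapolate only in the outer variable — correctly identifies the structural reason this route is taken.
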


\begin{corollary}[Corollary 2.8 of \cite{DK18}]
                                    \label{cor3}
Let $p,q,p',q'\in (1,\infty)$, $K_0 \ge 1$, $w_1=w_1(x')\in A_{p'}(\mu_1)$, $w_2=w_2(x'')\in A_{q'}(\mu_2)$,
$[w_1]_{A_{p'}}\le K_0$, $[w_2]_{A_{q'}}\le K_0$,
$w=w(x):=w_1(x')w_2(x'')$, and $f, g \in L_{p,q}(w\,d\mu)$.
Suppose that either $\mu(\cX)=\infty$ or $\operatorname{supp} f\subset B_{r_0}(x_0)$ and $\mu(B_{r_0}(x_0))\le \varepsilon\mu(\cX)$ for some $r_0\in (0,\infty]$ and $x_0\in \cX$, where $\varepsilon>0$ is a constant depending on $K_1$, $K_2$, $p$, $p'$, $q$, $q'$, and $K_0$.
Moreover, for each $n \in \bZ$ and $Q \in \bC_n$,
there exists a measurable function $f^Q$ on $Q$
such that $|f| \le f^Q \le K_3|f|$ on $Q$ for some constant $K_3>0$
and
\begin{equation*}	
\fint_Q |f^Q(x) - \left(f^Q\right)_Q| \,\mu(dx)
\le g(y)\quad \forall \,y\in Q.
\end{equation*}
Then we have
\begin{equation*}
\| f \|_{L_{p,q}(w\,d\mu)}
\le N\|g\|_{L_{p,q}(w\,d\mu)},
\end{equation*}
where $N=N(K_1,K_2,K_3,p,q,p',q',K_0)>0$.
\end{corollary}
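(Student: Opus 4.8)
The plan is to deduce this corollary from Theorem \ref{thm4.3} together with the mixed-norm extrapolation theorem of Rubio de Francia type recorded as \cite[Theorem 2.5]{DK18}; the substantive work lies in those two results, so the task is to assemble them. The first move is a reduction to a single-exponent, single-weight bound. I claim that for every $r\in(1,\infty)$ and every $w\in A_r(\mu)$ on $\cX$ with $[w]_{A_r}\le K_0$ one has
\[
\|f\|_{L_r(w\,d\mu)}\le N\|g\|_{L_r(w\,d\mu)},\qquad N=N(K_1,K_2,K_3,r,K_0).
\]
A single value of $r$ would already suffice for the extrapolation step, but Theorem \ref{thm4.3} delivers all of them at once.

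To prove this single-weight bound I would invoke Theorem \ref{thm4.3} with $p$ and $q$ there both taken equal to $r$, with the given weight $w\in A_r$, and with the choice $v:=K_3\,|f|$. The hypotheses hold: on each $Q\in\bC_n$ we have $|f|\le f^Q\le K_3|f|=v$, and the assumed control $\fint_Q|f^Q-(f^Q)_Q|\,\mu(dx)\le g(y)$ for all $y\in Q$ is exactly what Theorem \ref{thm4.3} requires. When $\mu(\cX)=\infty$, part (ii) gives $\|f\|_{L_r(w\,d\mu)}^r\le N\|g\|_{L_r(w\,d\mu)}^{\beta}\|v\|_{L_r(w\,d\mu)}^{r-\beta}$; when $\mu(\cX)<\infty$, the standing small-support hypothesis $\supp f\subset B_{r_0}(x_0)$ with $\mu(B_{r_0}(x_0))\le\varepsilon\mu(\cX)$ (which also controls $\supp v$) puts us in the second alternative of part (i), adding an error term $N\varepsilon^r\|v\|_{L_r(w\,d\mu)}^r$. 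Since $v=K_3|f|$, both terms on the right are multiples of powers of $\|f\|_{L_r(w\,d\mu)}$: choosing $\varepsilon$ so small that $NK_3^r\varepsilon^r\le\tfrac12$ absorbs the error term, and Young's inequality applied to $N\|g\|_{L_r}^{\beta}\bigl(K_3\|f\|_{L_r}\bigr)^{r-\beta}$ absorbs the remaining power of $\|f\|_{L_r(w\,d\mu)}$, leaving the claimed bound. The constants $\beta,N$ produced by Theorem \ref{thm4.3} depend only on $K_1,K_2,r,K_0$, and the extra factors of $K_3$ are harmless, so $N$ has the required form.

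With this family of weighted $L_r$ estimates in hand --- crucially, with the constant uniform over all $w\in A_r$ with $[w]_{A_r}\le K_0$ --- the mixed-norm extrapolation theorem \cite[Theorem 2.5]{DK18} applies directly. Its hypotheses are exactly such a bound together with the product-weight assumption made in the text preceding Corollary \ref{cor1}, namely that $w=w_1(x')w_2(x'')$ is an $A_r$ weight on $\cX$ with $[w]_{A_r}$ controlled by $[w_1]_{A_r}$ and $[w_2]_{A_r}$ (which holds, e.g., under the interior measure condition). Its conclusion is $\|f\|_{L_{p,q}(w\,d\mu)}\le N\|g\|_{L_{p,q}(w\,d\mu)}$ for all $p,q\in(1,\infty)$ and all $w_1\in A_{p'}(\mu_1)$, $w_2\in A_{q'}(\mu_2)$ with bounded characteristics; in particular the decoupling of the Lebesgue exponents $(p,q)$ from the weight indices $(p',q')$ is furnished by the extrapolation step itself and needs no separate argument.

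The point requiring the most care --- and the main obstacle to making the absorption in the second step fully rigorous --- is the a priori finiteness of $\|f\|_{L_r(w\,d\mu)}$ for the single weights $w$ used along the way: the hypothesis $f\in L_{p,q}(w\,d\mu)$ does not by itself place $f$ in $L_r(w\,d\mu)$. This is handled by a routine approximation, running the estimate for the truncations $f\,\chi_{E_k}$ on an exhausting sequence $E_k$ of sets of finite weighted measure and passing to the limit by monotone convergence, once one checks that the structural hypothesis (existence of $f^Q$ with the stated oscillation bound) descends to these truncations --- or, more cleanly, one uses that \cite[Theorem 2.5]{DK18} is phrased for pairs $(f,g)$ in the relevant class, so that this reduction is built in. A minor additional bookkeeping item is that in the finite-measure case the smallness threshold for $\varepsilon$ coming from the single-weight step must be compatible with the one appearing in the final mixed-norm statement; this too is accommodated by the formulation in \cite{DK18}.
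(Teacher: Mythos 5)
Your proposal is correct and follows exactly the route the paper itself indicates (the text preceding Corollary \ref{cor1} says these mixed-norm corollaries follow from Theorems \ref{thm4.1}--\ref{thm4.3} together with Rubio de Francia extrapolation): apply Theorem \ref{thm4.3} with $v=K_3|f|$ to obtain the single-exponent weighted $L_r$ bound, absorb the $\|v\|$ terms since $\|v\|=K_3\|f\|$, and then invoke the extrapolation theorem to pass to the mixed-norm estimate with decoupled weight indices. Your remarks on the a priori finiteness and the $\varepsilon$-bookkeeping in the finite-measure case correctly identify the only places where care is needed, and they are handled in the standard way.
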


Below we give an application of the above results to second-order parabolic equations in nondivergence form equations with VMO$_x$ coefficients. Before that, we recall the self-improving property of $A_p$ weights: for any $w\in A_p(\bR^d),p\in (1,\infty)$ satisfying $[w]_{A_p}\le K_0$, there exists $\varepsilon=\varepsilon(d,p,[w]_{A_p})\in (0,p-1)$ such that
$$
w\in A_{p-\varepsilon}\quad \text{and}\quad
[w]_{A_{p-\varepsilon}}\le N(d,p,[w]_{A_p}).
$$
See, for example, \cite[Theorem 3.2]{MS1981}. Now for any $p,q\in (1,\infty)$, $w_1\in A_p(\bR^d)$ with $[w_1]_{A_p}\le K_0$, and $w_2\in A_q(\bR)$ with $[w_2]_{A_q}\le K_0$, we choose
$$
\varepsilon_1 = \varepsilon_1(d, p, K_0),
\quad
\varepsilon_2 = \varepsilon_2(q, K_0)
$$
such that $p-\varepsilon_1 > 1$, $q - \varepsilon_2 >1$ and
$$
w_1 \in A_{p-\varepsilon_1}(\bR^{d}),
\quad
w_2 \in A_{q-\varepsilon_2}(\bR).
$$
Find $p_0, \nu_1 \in (1,\infty)$ satisfying
\begin{equation*}
p_0 \nu_1 = \min\left\{ \frac{p}{p-\varepsilon_1}, \frac{q}{q-\varepsilon_2} \right\}> 1.
\end{equation*}
Note that
\begin{equation*}
\begin{aligned}
w_1 &\in A_{p-\varepsilon_1}(\bR^{d}) \subset A_{p/(p_0\nu_1)}(\bR^{d}) \subset A_{p/p_0}(\bR^{d}),
\\
w_2 &\in A_{q-\varepsilon_2}(\bR) \subset A_{q/(p_0\nu_1)}(\bR) \subset A_{q/o_0}(\bR).
\end{aligned}
\end{equation*}
By using \eqref{eq9.03} and Corollaries \ref{cor1}-\ref{cor3}, we immediately get the following theorem, which is a special case of \cite[Theorem 5.2]{DK18} and improves one of the main results in \cite{MR2352490} by dropping the restriction $q\ge p$. See Remark \ref{rem2.9}.

\begin{theorem}
Let $p,q\in (1,\infty)$, $w_1$ and $w_2$ satisfy the assumptions above, $w=w_1(x)w_2(t)$, $T\in (-\infty,\infty]$, and the coefficients of $P$ satisfy Assumption \ref{assumption20071120}.
Then there exists $\lambda_0\ge 0$
depending only on $d$, $p$, $q$, $\delta$, $K$, $K_0$, and the
function $\omega$, and $N>0$ depending only on $d$, $p$, $q$, $\delta$, $K$, $K_0$, such that for any $\lambda\ge \lambda_0$ and $u\in W^{1,2}_{p,q,w}(\bR^{d+1}_T)$,
\begin{align*}
\|u_t \|+\|D^2 u \|+\lambda^{1/2} \|D u \|
+\lambda \|u \|\le N \| P u-\lambda u \|,
\end{align*}
where $\|\cdot\|=\|\cdot\|_{L_{p,q,w}(\bR^{d+1}_T)}$.
\end{theorem}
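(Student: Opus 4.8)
The plan is to rerun the freezing‑the‑coefficients argument used to prove Proposition~\ref{prop2.4}, but with the unweighted Fefferman--Stein and Hardy--Littlewood theorems replaced by their weighted mixed‑norm versions, Corollaries~\ref{cor1} and \ref{cor3}, applied on $\bR^{d+1}=\bR^d\times\bR$ equipped with the parabolic quasi‑metric, with the spatial variable $x\in\bR^d$ playing the role of $x'$ and the time variable $t\in\bR$ that of $x''$. First I would reduce, via the method of continuity and a density argument, to the a priori estimate for $u\in C_0^\infty$ and $T=\infty$, so that $U:=|D^2u|+\lambda^{1/2}|Du|+\lambda|u|$ and $f:=Pu-\lambda u$ are bounded with compact support and every maximal and sharp function below is automatically finite. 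As in the proof of Proposition~\ref{prop2.4}, under the temporary assumption that $u$ vanishes for $t\notin(-\gamma^2R^2,0)$ I would invoke the pointwise bound \eqref{eq9.03} for $U^\#$, valid for any $p_0,\nu_1>1$ with $\nu_2=\nu_1/(\nu_1-1)$; since the Christ dyadic cubes of $(\bR^{d+1},\text{parabolic metric},dx\,dt)$ are comparable to the parabolic cylinders in $\cQ$ (and the parabolic $\cM$ is comparable to the Hardy--Littlewood maximal operator for that quasi‑metric), the same inequality transfers to the dyadic sharp function $U^\#_{\mathrm{dy}}$ up to a dimensional constant.

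The next step is to choose the auxiliary exponents so that both weights land in the right Muckenhoupt classes. Writing $K_0$ for a bound on $[w_1]_{A_p}$ and $[w_2]_{A_q}$, the self‑improving property of $A_p$ weights (used exactly as in the computation preceding the statement) produces $\varepsilon_1=\varepsilon_1(d,p,K_0)>0$ and $\varepsilon_2=\varepsilon_2(q,K_0)>0$ with $w_1\in A_{p-\varepsilon_1}(\bR^d)$, $w_2\in A_{q-\varepsilon_2}(\bR)$, and controlled characteristics. I then take $p_0,\nu_1>1$ with
\[
p_0\nu_1=\min\Big\{\frac{p}{p-\varepsilon_1},\ \frac{q}{q-\varepsilon_2}\Big\}>1,
\]
which forces $w_1\in A_{p/(p_0\nu_1)}(\bR^d)\subset A_{p/p_0}(\bR^d)$ and $w_2\in A_{q/(p_0\nu_1)}(\bR)\subset A_{q/p_0}(\bR)$. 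Consequently Corollary~\ref{cor1}, applied first with the pair of exponents $(p/p_0,q/p_0)$ and then with $(p/(p_0\nu_1),q/(p_0\nu_1))$, gives
\[
\big\|\cM^{1/p_0}(U^{p_0})\big\|+\big\|\cM^{1/p_0}(|f|^{p_0})\big\|\le N\big(\|U\|+\|f\|\big),\qquad
\big\|\cM^{1/(p_0\nu_1)}(U^{p_0\nu_1})\big\|\le N\|U\|,
\]
where $\|\cdot\|$ abbreviates $\|\cdot\|_{L_{p,q,w}(\bR^{d+1})}$ and $N=N(d,p,q,\delta,K,K_0)$.

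Finally I would apply Corollary~\ref{cor3} with $f=U$, $f^Q=U$ on $Q$ (so $K_3=1$), and $g=U^\#_{\mathrm{dy}}$, which by the very definition of the dyadic sharp function satisfies the required hypothesis; since $\mu(\bR^{d+1})=\infty$ this yields $\|U\|\le N\|U^\#_{\mathrm{dy}}\|\le N\|U^\#\|$. Inserting \eqref{eq9.03} together with the three maximal‑function bounds of the previous paragraph gives
\[
\|U\|\le N\mu\,\|U\|+N\mu^{-\frac{2+d}{p_0}}\|f\|+N\mu^{-\frac{2+d}{p_0}}\big(\omega(R)+\gamma^2\big)^{\frac{1}{p_0\nu_2}}\|U\|.
\]
Choosing $\mu$ small, and then $R,\gamma$ small (all depending only on $d,p,q,\delta,K,K_0$ and $\omega$) absorbs the two $\|U\|$‑terms on the right, so $\|U\|\le N\|f\|$; since $u_t=a^{ij}D_{ij}u-\lambda u-f$, this also controls $\|u_t\|$, and the stated estimate follows. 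The small‑support‑in‑time restriction is removed by a partition of unity in $t$, exactly as at the end of the proof of Proposition~\ref{prop2.4}, and for general $T\in(-\infty,\infty]$ one extends $f$ by zero past $T$ and enlarges $\lambda_0$ to absorb the lower‑order terms $b^iD_iu+cu$.

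The only genuinely delicate point is the bookkeeping of the exponents: one must arrange that \emph{both} $w_1$ and $w_2$ simultaneously lie in $A_{p/p_0}$ and $A_{q/p_0}$ (and in the smaller classes $A_{p/(p_0\nu_1)}$, $A_{q/(p_0\nu_1)}$), which is precisely why $p_0\nu_1$ is taken as the minimum of the two self‑improved thresholds and why the freezing argument has to be run with a power $p_0\nu_1>1$ strictly between $1$ and $p\wedge q$ rather than with $p_0=2$. Once the $C_0^\infty$‑density reduction is in place, the comparability of parabolic cylinders with Christ cubes and the partition‑of‑unity step are routine.
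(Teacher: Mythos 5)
Your proposal is correct and follows essentially the same route that the paper sketches: the pointwise sharp-function estimate \eqref{eq9.03} obtained by freezing coefficients, combined with the self-improving property of $A_p$ weights to pick $p_0,\nu_1$ so that $w_1,w_2$ lie in the smaller Muckenhoupt classes $A_{p/(p_0\nu_1)}$, $A_{q/(p_0\nu_1)}$, and then the weighted mixed-norm Hardy--Littlewood and Fefferman--Stein theorems (Corollaries \ref{cor1} and \ref{cor3}) on $\bR^{d+1}$ with the parabolic quasi-metric. The exponent bookkeeping and the absorption of the small terms match the computation the paper carries out immediately before the theorem statement, so this is not a different proof but a careful fleshing-out of the same one.
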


We refer the reader to \cite{DK18} for various applications to weighted mixed-norm estimate for higher-order nondivergence form systems in the whole space or on the half space with VMO$_x$ coefficients, second-order nondivergence form equations with partially VMO coefficients, and higher-order divergence form systems with partially VMO coefficients on bounded or unbounded Reifenberg flat domains. From these a priori estimates, one can also derive the corresponding solvability results. See Section 8 of \cite{DK18}.

The argument developed in \cite{DK18} has later been applied in \cite{DG19,DG19b} to study higher-order elliptic and parabolic equations with VMO coefficients and with general boundary condition satisfying the Lopatinskiĭ--Shapiro condition, as well as in \cite{CK19} where weighted mixed  norm estimates were obtained for higher-order elliptic and parabolic systems with VMO$_x$ coefficients on Reifenberg flat domains.
See also \cite{DKr18} for some extensions with applications to fully nonlinear elliptic and parabolic equations, \cite{DP18,DKP19} for applications to time-dependent Stokes systems with VMO$_x$ coefficients, as well as \cite{HKP19} for applications to time-nonlocal equations.
We also refer the reader to \cite{Kr19} for a recent survey on the extrapolation theory as well as \cite{Kr20} for an application to the weak uniqueness of solutions of the stochastic Itô equations when the leading coefficients are assumed to be measurable in the time variable and two space variables, and be almost in VMO with respect to the other coordinates.

It would be interesting to see whether the results in \cite{DG19,DG19b} can be extended to the VMO$_x$ case.

To conclude this section, we mention that there are many important work in the literature when the weight is certain power of the distance to the boundary of the domain, which in particular have applications to SPDEs. See \cite{Kr99, Kr01, KK04, KN09, MR3318165,DKZ14}, and the references there in.

\section{Nonlocal elliptic and parabolic equations} \label{sec5}

In this section, we discuss two types of non-local elliptic and parabolic equations.
Non-local equations such as integro-differential equations with jump L\'evy processes have
received increasing attention. See recent work of Caffarelli, Silvestre, and many other people. These equations arise from models in physics, engineering, and finance that involve long-range interactions. An example is the
following equation
\begin{align}
                                \label{nonloc}
&Lu(x)-\lambda u(x)\notag\\
&:=\int_{\bR^d} \left(u(x+y) - u(x)-y\cdot \nabla u(x)\chi^{(\sigma)}(y)\right)K(x,y)\, d y-\lambda u(x)=f(x),
\end{align}
where
$$
\chi^{(\sigma)}\equiv 0\quad \text{for}\,\, \sigma\in (0,1),
\quad \chi^{(1)}=1_{y\in B_1},\quad
\chi^{(\sigma)}\equiv 1 \quad \text{for}\,\,\sigma\in (1,2),
$$
and $K(x,y)$ is a positive kernel which has certain
lower and upper bounds. 
A typical case is the fractional Laplace operator $-(-\Delta)^\gamma, \gamma\in (0,1)$ which can be written as the principal value of the integral above with
$K(x,y)=C_{d,\gamma}|y|^{-d-2\gamma}$ for a constant
$C_{d,\gamma}>0$. In this case, the classical theory for pseudo-differential operators shows that, for any $\lambda > 0$ and $f \in L_p(\bR^d)$, $1<p<\infty$, there exists a unique solution $u$ in the Bessel potential space
$$
H_p^{\sigma}(\bR^d):
= \{ u \in L_p(\bR^d): (1-\Delta)^{\sigma/2} u \in L_p(\bR^d) \}
$$
to \eqref{nonloc} satisfying
$$
\|u\|_{H_p^\sigma(\bR^d)}
\le N(d,\sigma,\lambda, p) \|f\|_{L_p(\bR^d)}.
$$
In general, if the symbol of the operator is sufficiently smooth and its derivatives satisfy appropriate decays,
the aforementioned $L_p$-solvability is classical following from the Fourier multiplier theorems. On the other hand, the $L_p$-solvability is also available if
\begin{equation}
                                \label{eq8.31}
K(x,y)=K(y)=a(y)/|y|^{d+\sigma}
\end{equation}
and $a(y)$ is homogeneous of order zero and sufficiently smooth. See \cite{Ko84,MP92}.

In \cite{DK12a}, the authors extended this type of $L_p$-solvability to the equation \eqref{nonloc} 
when the kernel $K$ is translation invariant with respect to $x$, i.e., $K(x,y)=K(y)$, merely measurable in $y$, and satisfies the ellipticity condition
\begin{equation}
                                    \label{eq7.07}
(2-\sigma)\frac{\nu}{|y|^{d+\sigma}}
\le K(y) \le (2-\sigma)\frac{\nu^{-1}}{|y|^{d+\sigma}}, \quad \nu\in (0,1),
\end{equation}
where $0 < \sigma < 2$ is a constant.
By using a purely analytic method, the authors proved the continuity of the non-local operator $L$ from the Bessel potential space $H^\sigma_p$ to $L_p$, and the unique strong solvability of the corresponding non-local elliptic equations in $L_p$ spaces.
\begin{theorem}[Theorem 2.1 of \cite{DK12a}]
Let $1< p < \infty$, $\lambda \ge 0$, and $0< \sigma < 2$. Assume that $K=K(y)$ satisfies \eqref{eq7.07} and, if $\sigma=1$, $K$ also satisfies the additional cancellation condition
$$
\int_{\partial B_r}yK(y)\, dS_r(y)=
0,\quad \forall r\in (0,\infty).
$$
Then $L$ defined in \eqref{nonloc} is a continuous operator from $H_p^\sigma$ to $L_p$.
For $u \in H_p^{\sigma}$ and $f \in L_p$ satisfying
\begin{equation}
								\label{eq1220}
Lu - \lambda u = f	\quad\text{in}\,\,\bR^d,
\end{equation}
we have
\begin{equation*}
\|u\|_{\dot H_p^\sigma} + \sqrt{\lambda}\|u\|_{\dot{H}^{\sigma/2}_p}
+ \lambda \|u\|_{L_p}
\le N \|f\|_{L_p},
\end{equation*}
where $N=N(d,\nu,\Lambda,\sigma,p)$.
Moreover, for any $\lambda > 0$ and $f \in L_p$, there exists a unique strong solution $u \in H_p^{\sigma}$ of \eqref{eq1220}.
\end{theorem}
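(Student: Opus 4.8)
The plan is to exploit translation invariance and reduce everything to a single Fourier multiplier. Since $K=K(y)$, the operator $L$ acts as $\widehat{Lu}(\xi)=m(\xi)\widehat u(\xi)$, where
$$m(\xi)=\int_{\bR^d}\bigl(e^{iy\cdot\xi}-1-iy\cdot\xi\,\chi^{(\sigma)}(y)\bigr)K(y)\,dy;$$
in the borderline case $\sigma=1$ the hypothesis $\int_{\partial B_r}yK\,dS_r=0$ is exactly what makes the linear correction term insensitive to the annulus over which it is subtracted, so that $m$ is well defined and obeys the same bounds as for $\sigma\ne1$. The first step is to record, by splitting the defining integral at $|y|=1/|\xi|$ and using \eqref{eq7.07}, the two symbol estimates
$$|m(\xi)|\le N|\xi|^{\sigma},\qquad -\operatorname{Re}m(\xi)=\int_{\bR^d}\bigl(1-\cos(y\cdot\xi)\bigr)K(y)\,dy\ge c\,|\xi|^{\sigma},$$
where the normalizing factor $2-\sigma$ in \eqref{eq7.07} is precisely what keeps $c$ bounded away from $0$ as $\sigma\uparrow2$. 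Hence $|m(\xi)-\lambda|\ge c|\xi|^{\sigma}+\lambda$ for all $\lambda\ge0$, and the asserted a priori estimate is equivalent to the $L_p$-boundedness, uniformly in $\lambda\ge0$, of the three Fourier multipliers
$$\frac{|\xi|^{\sigma}}{m(\xi)-\lambda},\qquad \frac{\sqrt\lambda\,|\xi|^{\sigma/2}}{m(\xi)-\lambda},\qquad \frac{\lambda}{m(\xi)-\lambda}$$
(the middle one is bounded because $c|\xi|^{\sigma}+\lambda\ge 2\sqrt c\,|\xi|^{\sigma/2}\sqrt\lambda$), while the continuity $L\colon H^{\sigma}_p\to L_p$ is the $L_p$-boundedness of $m(\xi)(1+|\xi|^2)^{-\sigma/2}$, a bounded symbol by the first estimate. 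For $p=2$ all of this is immediate from Plancherel's theorem, and for $\lambda>0$ multiplication by $(m(\xi)-\lambda)^{-1}$ is a bounded inverse of $L-\lambda$ on $H^{\sigma}_2$, so existence and uniqueness are clear there.

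The substance is the range $p\ne2$, where the obstruction is that $K$ is merely measurable in $y$, so $m$ is not smooth — for $\sigma\le1$ it need not even be once differentiable — and the Mikhlin–Hörmander multiplier theorem is unavailable. The plan is to run the mean-oscillation scheme of Section \ref{sec2} in its elliptic form. The key point is an interior estimate for solutions of the homogeneous equation $Lu-\lambda u=0$ on a ball: this is where one either bootstraps the $L_2$ estimate, using that a translation-invariant $L$ commutes with the difference operators $u(\cdot+h)-u(\cdot)$ and with $D^k$ (so all derivatives of $u$ solve the same homogeneous equation in the interior), or else invokes the known interior regularity for translation-invariant nonlocal operators, to conclude that such $u$ are smooth inside with quantitative bounds. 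Combining this with the $L_2$ estimate gives, for $u$ solving $Lu-\lambda u=f$ in $B_r$, a mean-oscillation estimate
$$\bigl(|U-(U)_{B_{\mu r}}|\bigr)_{B_{\mu r}}\le N\mu^{\gamma}\bigl(|U|^2\bigr)^{1/2}_{B_r}+N\mu^{-\beta}\bigl(|f|^2\bigr)^{1/2}_{B_r}$$
for $U=|(-\Delta)^{\sigma/2}u|+\lambda^{1/2}|(-\Delta)^{\sigma/4}u|+\lambda|u|$, up to tail terms involving averages of $u$ over dilated balls that are folded into the maximal-function machinery. The Fefferman–Stein sharp function theorem together with the Hardy–Littlewood maximal function theorem, in the elliptic analogue of \eqref{eq4.41}--\eqref{eq4.43}, then yields the a priori estimate for $p>2$ after choosing $\mu$ small; since the kernel is already $x$-independent, no freezing and hence no $L^{p_0}$ bootstrap as in \eqref{eq4.41b} is needed. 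For $1<p<2$ one dualizes: the formal adjoint of $L$ has kernel $K(-y)$, which belongs to the same class with the same $\sigma$ and the same cancellation property, so the $p>2$ result for the adjoint gives the $p<2$ result for $L$; the continuity $L\colon H^{\sigma}_p\to L_p$ follows by the same argument. Finally, existence for $1<p<\infty$, $\lambda>0$, follows from the a priori estimate by the method of continuity along the segment of kernels $(1-t)K+t\,c_{d,\sigma}|y|^{-d-\sigma}$, each of which satisfies \eqref{eq7.07} (with a smaller ellipticity constant depending on $d,\sigma$) and the $\sigma=1$ cancellation condition, the endpoint being $-(-\Delta)^{\sigma/2}$, for which solvability in $H^{\sigma}_p$ is classical since its relevant symbols $|\xi|^{\sigma}/(|\xi|^{\sigma}+\lambda)$, etc., are genuine Mikhlin multipliers uniformly in $\lambda$.

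The hard part is the interior estimate for homogeneous solutions, and inside it the control of the nonlocal tails: unlike the local case one cannot simply cut off and differentiate, since $L(\eta u)\ne\eta\,Lu$, and one must show that the discrepancy produced by localization — essentially an integral of $u$ over a dilated region against the far part of the kernel — is genuinely of lower order and hence does not destroy the gain $\mu^{\gamma}$. Once the mean-oscillation estimate is in hand, the passage to all $p$ via Fefferman–Stein and duality, and the solvability via the method of continuity, are routine along the lines of Sections \ref{sec2}--\ref{sec4}.
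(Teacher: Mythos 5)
Your overall architecture — symbol bounds $|m(\xi)|\le N|\xi|^\sigma$ and $-\operatorname{Re}m(\xi)\ge c|\xi|^\sigma$ from \eqref{eq7.07}, $p=2$ via Plancherel, a mean-oscillation estimate feeding into the Fefferman--Stein and Hardy--Littlewood machinery for $p>2$, duality to reach $1<p<2$ (the adjoint kernel $K(-y)$ indeed inherits the ellipticity and the $\sigma=1$ cancellation), and the method of continuity toward $-(-\Delta)^{\sigma/2}$ — matches the paper's proof in structure. Those pieces are correct and would be routine once the oscillation lemma is available.

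The gap is precisely the step you call ``the hard part'': the interior estimate for homogeneous solutions. Neither of your two suggested routes supplies it. The bootstrap route (differentiate, Caccioppoli, Sobolev) is the mechanism of Lemma~\ref{lem2.1} in the local case, but it does not transfer: since $L(\eta u)\ne\eta Lu$, each cutoff introduces a nonlocal tail term that is not of lower order in any automatic sense, and the iterated local energy inequalities that make the bootstrap close in the second-order case simply are not available. Commuting $L$ with $D^k$ or with difference quotients is legal by translation invariance, but it does not produce the local gain --- a derivative of a solution solves the same equation with the \emph{same} $L_2$ bound, so iteration does not improve regularity without a genuine interior estimate to begin with. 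Your fallback, ``invoke known interior regularity for translation-invariant nonlocal operators,'' is exactly what is \emph{not} available for merely measurable, non-symmetric, non-homogeneous $K$: that is the novelty of \cite{DK12a}, and the paper supplies it by proving a $C^\alpha$ H\"older estimate for the nonlocal equation, adapting the Ishii--Lions/viscosity-solution arguments of Barles, Chasseigne, and Imbert \cite{Ba11}. That $C^\alpha$ estimate is the base case that lets one differentiate and iterate to reach $C^{\sigma+\alpha}$ (or, in the mean-oscillation form, to control the oscillation of $(-\Delta)^{\sigma/2}u$); also, the tail contributions must be explicitly shown to be dominated by a (weighted) maximal function, which is a separate step, not something that ``folds in'' automatically. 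In short, the missing ingredient is the Barles--Chasseigne--Imbert-type $C^\alpha$ estimate, and without it the mean-oscillation lemma on which your entire $p\ne2$ argument rests is unproved.
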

As a byproduct, interior $L_p$-estimates were also obtained.
The proof is built on mean oscillation estimates mentioned above. The key step in establishing the mean oscillation estimates of solutions is based on a $C^{\alpha}$ estimate for the non-local equation, for which some ideas from Barles et al. \cite{Ba11} were used.
Compared to previous results, the novelty of the results is that the function $a$ in \eqref{eq8.31} is not necessarily to be homogeneous, regular, or symmetric. An application of the above result is the uniqueness for the martingale problem associated to the operator $L$ upon using the embedding $C^0\subset H^\sigma_p$ for $p>d/\sigma$. See, for instance, \cite{Ko84}. 
Similar results for parabolic equations were later obtained in \cite{MP14}.

It is however not clear to us whether the results in \cite{DK12a} still hold if the kernel $K(x,y)$ can be written as $a(x,y)/|y|^{d+\sigma}$ and $a$ is uniformly continuous with respect to $x$. Unlike the second-order case, this extension seems to be highly nontrivial. For instance, it is not even clear whether the operator $L$ is bounded from $H^\sigma_p$ to $L_p$ under the continuity condition on $a$. This is also highly in contrast with the Schauder theory for the same equation (see, for instance, \cite{DK13}), where for the $C^{\sigma+\alpha}$ estimate, it suffices to assume $a$ to be $C^\alpha$ in $x$. We note that a partial result on the $L_p$ estimates has been obtained in \cite{MP14} when $a$ is H\"older in $x$ and $p$ is sufficiently large.

Another type of non-local equations which we would like to discuss here is equations with non-local time derivatives, e.g., the Caputo fractional time derivative, which has been used to model fractional diffusion in plasma turbulence. When $\alpha\in (0,1)$, it is given by
$$
\partial_t^\alpha u(t,x) = \frac{1}{\Gamma(1-\alpha)} \frac{d}{dt} \int_0^t (t-s)^{-\alpha} \left[ u(s,x) - u(0,x) \right] \, ds.
$$
Recently, there are many interesting work about parabolic equations with non-local time derivatives. For instance, De Giorgi--Nash--Moser type H\"{o}lder estimates for time fractional parabolic equations was obtained in Zacher \cite{MR3038123}, and for parabolic equations with fractional operators in both $t$ and $x$ in Allen et al. \cite{MR3488533}.

In Kim et al. \cite{MR3581300}, the authors proved the unique solvability in mixed $L_{p,q}$ spaces of the time fractional parabolic equation
\begin{equation*}
- \partial_t^\alpha u + a^{ij}D_{ij} u + b^i D_i u + c u = f
\end{equation*}
for $\alpha\in (0,2)$, under the assumption that the leading coefficients $a^{ij}$ are piecewise continuous in time and uniformly continuous in the spatial variables. Their proof is based upon a representation formula for solution to the time fractional heat operator
$$
-\partial_t^\alpha + \Delta
$$
using the Mittag--Leffler function
together with a perturbation argument.

In \cite{DKim19} this result was substantially extended in the case when $\alpha\in (0,1)$, where the leading coefficients $a^{ij}$ are allowed to be merely measurable in time and VMO in the space variables, as in Section \ref{sec2}. For the proof, the authors exploited the level set argument in \cite{MR1511579}. The main difficulty arises in the key step where one needs to estimate local $L_\infty$ estimates of the Hessian of solutions to locally homogeneous equations.
Starting from the $L_2$-estimate and applying the Sobolev type embedding results, one can only show that such Hessian are in $L_{p_1}$ for some $p_1>2$, instead of $L_\infty$.
Nevertheless, this allows us to obtain the $L_p$ estimate and solvability for any $p\in [2,p_1)$ and $a^{ij}=a^{ij}(t)$ by using a modified level set type argument. By repeating this procedure, one can iteratively increase the exponent $p$ for any $p\in [2,\infty)$. The case when $p\in (1,2)$ follows from a duality argument. For equations with the leading coefficients being measurable in $t$ and having small mean oscillations in $x$, it suffices to use a perturbation argument. This is achieved by incorporating the small mean oscillations of the coefficients into level set estimates of solutions having compact support in the spatial variables. After that, the standard partition of unity argument completes the proof.

In this direction, we also refer the reader to \cite{DKim20} for a corresponding result for divergence form equations with non-local time derivatives with leading coefficients measurable in a space variable and VMO with respect to other variables, as well as \cite{HKP19} for an extension of the result in \cite{MR3581300} to weighted mixed-norm Lebesgue spaces for the fractional heat equation
$$
-\partial_t^\alpha u+ \Delta u=f.
$$

Given the results in \cite{DK18} and \cite{MR3581300, HKP19}, it is natural to ask whether the result in \cite{DKim19} can be extended to the mixed-norm spaces and whether it is possible to also include weights. Unfortunately, it turns out that these extensions cannot be achieved by using the technique of iteration and the level set argument in \cite{DKim19}. In \cite{DKim20b}, the authors gave a definite answer to these two questions. In particular, it was proved that under the same assumptions on the coefficients as in \cite{DKim19}, for any $p,q\in (1,\infty)$ and Muckenhoupt weights $w_1(t) \in A_p(\bR), w_2(x) \in A_q(\bR^d)$, if $u$ satisfies
$$
- \partial_t^\alpha u + a^{ij}D_{ij} u + b^i D_i u + c u = f\quad \text{in}\ (0,T)\times \bR^d
$$
with the zero initial condition at $t=0$, then it holds that
$$
\||\partial_t^\alpha u|+|u|+|Du|+|D^2 u|\|_{L_{p,q,w}\left((0,T)\times\bR^d\right)} \leq N \|f\|_{L_{p,q,w}\left((0,T)\times\bR^d\right)},
$$
where $w=w_1(t)w_2(x)$ and $N$ is independent of $u$ and $f$.

For the proof, the authors adapted the mean oscillation argument mentioned above.
For this, one need to establish a H\"older estimate of $D^2 v$, where $v$ satisfies a certain homogeneous equation with coefficients depending only on $t$. Such an estimate can be obtained relatively easily for parabolic equations with the local time derivative term $v_t$ via somewhat standard local estimates. However, if the non-local time derivative is present, the local estimates do not work when improving the regularity of $v$ because the non-local time derivative of $v$ depends on all past states of $v$.
To overcome the difficulty from the non-local effect in time, the strategy is to consider an infinite cylinder $(-\infty,t_0)\times B_r(x_0)$ instead of the usual parabolic cylinder $Q_r(t_0,x_0)$. We write $u \in \bH_{p,0}^{\alpha,2}\left((0,T) \times \bR^d\right)$ if there exists a sequence $\{u_n\}$ such that $u_n \in C_0^\infty\left([0,T] \times \bR^d\right)$, $u_n(0,x) = 0$, and
\begin{equation*}
\|u_n - u\|_{p} + \|Du_n - Du\|_{p} + \|D^2u_n - D^2u\|_{p} + \|\partial_t^\alpha u_n - \partial_t^\alpha u\|_{p} \to 0
\end{equation*}
as $n \to \infty$, where $\|\cdot\|_{p} = \|\cdot\|_{L_{p}\left((0,T) \times \bR^d\right)}$. Let $\bH^{\alpha,2}_{p_0,0}((0,t_0)\times B_r)$ be the collection of functions in $\bH_{p,0}^{\alpha,2}\left((0,T) \times \bR^d\right)$ restricted in $(0,t_0)\times B_r$.

\begin{lemma}
                    \label{lem1}
Let $p_0\in (1,\infty)$, $t_0 \in (0,\infty)$, and $v\in \bH^{\alpha,2}_{p_0,0}((0,t_0)\times B_r)$ satisfy \begin{equation}
							\label{eq1.03}
- \partial_t^\alpha v + a^{ij}(t) D_{ij} v=0
\end{equation}
in $(0,t_0) \times B_r$, $r > 0$. Then there exists
$$
p_1 = p_1(d, \alpha,p_0)\in (p_0,\infty]
$$
satisfying
\begin{equation*}
p_1 > p_0 + \min\left\{\frac{2\alpha}{\alpha d + 2 - 2\alpha}, \alpha\right\}
\end{equation*}
such that
\begin{equation*}
\left( |D^2v|^{p_1} \right)_{Q_{r/2}(t_1,0)}^{1/p_1} \leq N
\sum_{j=1}^\infty j^{-(1+\alpha)} \left( |D^2v|^{p_0} \right)_{Q_r(t_1-(j-1)r^{2/\alpha},0)}^{1/p_0}
\end{equation*}
for any $t_1 \leq t_0$,
where $N=N(d,\delta, \alpha,p_0)$ and
$$
\left( |D^2v|^{p_1} \right)_{Q_{r/2}(t_1,0)}^{1/p_1} = \|D^2 v\|_{L_\infty\left(Q_{r/2}(t_1,0)\right)} \quad \text{if} \quad p_1 = \infty.
$$
If $p_0 > d/2+1/\alpha$, then
\begin{equation*}
[D^2 v]_{C^{\sigma \alpha/2,\sigma}(Q_{r/2}(t_1,0))} \leq N r^{-\sigma}
\sum_{j=1}^\infty j^{-(1+\alpha)} \left( |D^2v|^{p_0} \right)_{Q_r(t_1-(j-1)r^{2/\alpha},0)}^{1/p_0}
\end{equation*}
for any $t_1 \leq
t_0$, where $\sigma = \sigma(d,\alpha,p_0) \in (0,1)$.
Moreover, if $p_1 < \infty$, then $v \in \bH_{p_1,0}^{\alpha,2}\left((0,t_0) \times  B_{r/2}\right)$.
\end{lemma}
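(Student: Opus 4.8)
The plan is to run an energy (testing) argument for $v$ and its spatial derivatives on cylinders that reach all the way back to the initial time, and then to convert the resulting integral bounds into higher integrability and H\"older bounds by a fractional-parabolic Sobolev embedding. The guiding principle is that, unlike the term $u_t$, the Caputo derivative $\partial_t^\alpha$ cannot be localized in time: in place of an estimate on the single cylinder $Q_r(t_1,0)$ one is forced to carry along the contributions of $v$ from all the earlier cylinders $Q_r(t_1-(j-1)r^{2/\alpha},0)$, weighted by the memory decay of the kernel $(t-s)^{-\alpha}/\Gamma(1-\alpha)$, and this is precisely where the series $\sum_j j^{-(1+\alpha)}$ enters.

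First I would mollify $v$ in $x$ and pass to the limit via Arzel\`a--Ascoli so that $v$ may be assumed smooth in $x$; since $a^{ij}=a^{ij}(t)$, every spatial derivative $D^\beta v$ again solves \eqref{eq1.03} with zero initial value on $(0,t_0)\times B_r$. After the parabolic dilation $(t,x)\mapsto(r^{2/\alpha}t,rx)$, which is the source of the factor $r^{-\sigma}$ in the H\"older estimate, I may normalize $r$. Fix $t_1\le t_0$, a spatial cutoff $\eta$ with $\eta\equiv1$ on $B_{3r/4}$ and $\operatorname{supp}\eta\subset B_r$, and a temporal cutoff $\zeta$ with $\zeta(0)=0$, $\zeta\equiv1$ on $(t_1-(r/2)^{2/\alpha},t_1)$, supported to the right of $t_1-(3r/4)^{2/\alpha}$. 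Testing the equation for $D^\beta v$ against $|D^\beta v|^{p_0-2}D^\beta v\,\eta^2\zeta^2$ (regularized near its zero set when $p_0<2$), using the coercivity of $a^{ij}$ and the convexity inequality $|w|^{p_0-2}w\,\partial_t^\alpha w\ge\partial_t^\alpha(\tfrac1{p_0}|w|^{p_0})$ for $w$ with $w(0)=0$, I obtain a Caccioppoli-type bound for $D(|D^\beta v|^{p_0/2}\eta\zeta)$ together with a fractional-time-integral bound for $|D^\beta v|^{p_0}$ on the slab near $t_1$. The two genuinely nonlocal terms produced are the temporal commutator $[\partial_t^\alpha,\zeta^2]$ applied to $\eta$-localized quantities and the spatial-cutoff error $[a^{ij}D_{ij},\eta^2]D^\beta v$; the latter is harmlessly supported in $B_r\setminus B_{3r/4}$, whereas the former must be estimated directly from the kernel, whose time derivative behaves like $(t-s)^{-1-\alpha}$, so that the contribution of the slab at time-distance $\sim jr^{2/\alpha}$ carries the weight $j^{-1-\alpha}$; summing over $j$ (convergent since $\alpha>0$, and automatically truncated once $t_1-jr^{2/\alpha}<0$ because $v\equiv0$ there) reproduces exactly $\sum_{j\ge1}j^{-(1+\alpha)}(|D^\beta v|^{p_0})_{Q_r(t_1-(j-1)r^{2/\alpha},0)}^{1/p_0}$.

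With these bounds in hand I would invoke the fractional-parabolic Sobolev embedding for $\bH^{\alpha,2}_{p_0,0}$, in which the time variable counts as $2/\alpha$ space dimensions, so that two orders of regularity ($D^2 v$ together with $\partial_t^\alpha v=a^{ij}D_{ij}v$) raise the integrability from $p_0$ to the exponent $p_1$ determined by $p_1^{-1}=p_0^{-1}-2\alpha/(\alpha d+2)$ (and $p_1=\infty$ if the right side is $\le0$); a short computation shows that the gain $p_1-p_0$ is increasing in $p_0$ and already exceeds $2\alpha/(\alpha d+2-2\alpha)$ at $p_0\downarrow1$, hence is at least $\min\{2\alpha/(\alpha d+2-2\alpha),\alpha\}$ with room to absorb any $\varepsilon$-loss from the localization, and when $p_0>d/2+1/\alpha$ the same embedding gives $C^{\sigma\alpha/2,\sigma}$ continuity with $\sigma=\sigma(d,\alpha,p_0)\in(0,1)$. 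Applying this to the localized $D^2 v$, and controlling the lower-order and higher-derivative terms $Dv,D^2v,D^3v,D^4v$ that enter the localization errors by iterating the energy estimate of the previous step, I would absorb every right-hand contribution into the single memory series $\sum_j j^{-(1+\alpha)}(|D^2v|^{p_0})_{Q_r(\cdots)}^{1/p_0}$, obtaining the two displayed inequalities after undoing the dilation. Finally, if $p_1<\infty$, the same embedding applied to $v$ and $Dv$ gives $v,Dv,D^2v\in L_{p_1,\mathrm{loc}}$ near $(t_0,0)$, and $\partial_t^\alpha v=a^{ij}D_{ij}v\in L_{p_1,\mathrm{loc}}$ by the equation; together with the approximation property inherited from $\bH^{\alpha,2}_{p_0,0}$ this yields $v\in\bH^{\alpha,2}_{p_1,0}((0,t_0)\times B_{r/2})$.

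The main obstacle is the second step: the Caputo derivative resists any genuine time cutoff, so one cannot reduce to a short time interval, and the heart of the proof is the quantitative control of $[\partial_t^\alpha,\zeta^2]$ --- extracting exactly the $j^{-(1+\alpha)}$ decay, and then making sure that, when this estimate is iterated over the finitely many spatial-differentiation steps needed to feed the embedding, the nested sums of geometrically weighted averages remain summable with a constant depending only on $d,\delta,\alpha,p_0$. Everything else --- running the testing argument in $L_{p_0}$ for $p_0\ne2$, and threading the anisotropic Sobolev exponents so that the gain lands at the claimed $p_1$ --- is technical but standard once the memory bookkeeping is in place.
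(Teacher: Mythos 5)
Your proposal substitutes an energy/testing argument in $L_{p_0}$ for the paper's approach, which (per the remark after the lemma) multiplies $D^\beta v$ by spatial cutoffs and feeds the localized equations into the $L_{p_0}$ a priori estimates of \cite{DKim19} to bootstrap $D^\beta v\in\bH^{\alpha,2}_{p_0,0}$ locally for $\beta=0,1,2$, then invokes the anisotropic embedding theorems of that same paper. There is a genuine gap in your route. Testing the equation for $D^\beta v$ against $|D^\beta v|^{p_0-2}D^\beta v\,\eta^2\zeta^2$ and using the fractional convexity inequality gives a Caccioppoli bound of the form
$$
\int |D^\beta v|^{p_0-2}|D^{\beta+1}v|^2\,\eta^2\zeta^2 \lesssim \text{(cutoff errors)},
$$
that is, an $L_2$ control of $D\bigl(|D^\beta v|^{p_0/2}\bigr)$. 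This is \emph{not} an $L_{p_0}$ bound on $D^{\beta+1}v$, and it does not put $D^2v$ into $\bH^{\alpha,2}_{p_0}$ locally (one would need $D^3 v,D^4v,\partial_t^\alpha D^2v\in L_{p_0}$ locally for that). What the Caccioppoli route actually buys, via the spatial Sobolev embedding applied to $|D^\beta v|^{p_0/2}\in H^1_x$, is the gain $p_0\mapsto p_0 d/(d-2)$, which is a purely spatial improvement and does not coincide with the claimed anisotropic exponent $1/p_1 = 1/p_0 - 2\alpha/(\alpha d + 2)$; in particular it degenerates differently in $p_0$ and does not exploit the $\partial_t^\alpha$-regularity at all, so the $C^{\sigma\alpha/2,\sigma}$ conclusion for $p_0 > d/2 + 1/\alpha$ would not follow from it. The fractional-parabolic embedding you invoke at the end requires genuine $\bH^{\alpha,2}_{p_0}$ membership of $D^2v$ as input, and your energy step does not supply it.

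The fix is to do what the paper actually does: take a spatial cutoff $\eta$, observe that $w=\eta D^\beta v$ solves $-\partial_t^\alpha w + a^{ij}(t)D_{ij}w = g$ with $g$ supported in an annulus and controlled by $D^\beta v, D^{\beta+1}v$ there (there is no temporal commutator since $\eta$ is time-independent), extend $w$ by zero in $x$, and apply the global $L_{p_0}$ estimate for $\bH^{\alpha,2}_{p_0,0}((0,t_0)\times\bR^d)$ established in \cite{DKim19}; iterating in $\beta$ puts $D^2v$ in $\bH^{\alpha,2}_{p_0,0}$ locally, after which the embedding theorems of \cite{DKim19} give exactly the exponent $p_1$ (or the H\"older modulus $\sigma$ when $p_1=\infty$). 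The factor $\sum_j j^{-(1+\alpha)}$ then enters exactly as you predict, from decomposing the right-hand side of the localized estimate into time slabs and using the decay of the Caputo kernel, but it comes out of the structure of the global $L_{p_0}$ estimate and not out of a $[\partial_t^\alpha,\zeta^2]$ commutator (which your plan needed only because you introduced a temporal cutoff that the paper avoids). Your memory-bookkeeping heuristic, and your arithmetic for $p_1-p_0$ and its monotonicity, are correct; the flaw is purely in how the intermediate $L_{p_0}$ regularity is produced.
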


Lemma \ref{lem1} is proved by using a cutoff argument and the embedding theorems obtained in \cite{DKim19}. Then by an iteration argument and the simple inequality
$$
\sum_{j=1}^{k-1} j^{-(1+\alpha)}(k-j)^{-(1+\alpha)}\le N(\alpha) k^{-(1+\alpha)}
$$
for any $\alpha>0$, the following H\"older estimate is obtained.

\begin{proposition}
                            \label{prop5.1}
Let $1<p_0<p<\infty$, $t_0 \in (0,\infty)$, and $v\in \bH^{\alpha,2}_{p_0,0}((0,t_0)\times B_r)$ satisfy \eqref{eq1.03} in $(0,t_0) \times B_r$, $r > 0$. Then we have
\begin{align*}
[D^2 v]_{C^{\sigma \alpha/2,\sigma}(Q_{r/2}(t_1,0))} \leq N r^{-\sigma}
\sum_{j=1}^\infty j^{-(1+\alpha)} \left( |D^2v|^{p_0} \right)_{Q_r(t_1-(j-1)r^{2/\alpha},0)}^{1/p_0}
\end{align*}
for any $t_1 \leq t_0$,
where $\sigma = \sigma(d,\alpha,p_0) \in (0,1)$ and $N=N(d,\delta,\alpha,p,p_0)$.
\end{proposition}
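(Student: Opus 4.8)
\emph{Plan of proof.} The plan is to bootstrap Lemma \ref{lem1}. The key point is that the exponent gain there, $c_0 := \min\{\frac{2\alpha}{\alpha d + 2 - 2\alpha},\ \alpha\}$, is strictly positive and depends only on $d$ and $\alpha$. Thus if $p_0 > d/2 + 1/\alpha$ the assertion is precisely the second part of Lemma \ref{lem1} and there is nothing to prove. If $p_0 \le d/2 + 1/\alpha$, one builds a finite increasing chain $p_0 < p_1 < \cdots < p_m$ by taking $p_{i+1} = p_1(d,\alpha,p_i)$ at each step (capping it at a fixed finite value if Lemma \ref{lem1} ever returns $\infty$, which only strengthens the estimate) and stopping at the first $m$ with $p_m > d/2 + 1/\alpha$; since $p_{i+1} - p_i \ge c_0$, this $m$ depends only on $d$, $\alpha$, $p_0$. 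By the last assertion of Lemma \ref{lem1}, at each step $v$ still belongs to $\bH^{\alpha,2}_{p_i,0}$ on the halved ball, so the next step is legitimate.

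The second step is to run this iteration on a nested family of balls centred at an arbitrary $y \in B_{r/2}$. Set $\tilde\rho := r\,2^{-m-2}$ and $\rho_0 := 2^{m+1}\tilde\rho = r/2$, so that $B_{\rho_0}(y) \subset\subset B_r$; apply the first estimate of Lemma \ref{lem1} at radius $\rho_{i-1} = 2^{-(i-1)}\rho_0$ for $i = 1,\dots,m$, raising the exponent from $p_{i-1}$ to $p_i$, and then — since $p_m > d/2 + 1/\alpha$ — apply its second (Hölder) estimate once more. Composing these $m+1$ inequalities gives, for every $t_1' \le t_0$,
\[
[D^2 v]_{C^{\sigma\alpha/2,\sigma}(Q_{\tilde\rho}(t_1',y))}\ \le\ N\,r^{-\sigma}\sum_{\vec j\in\bN^{m+1}}\ \prod_{i=1}^{m+1} j_i^{-(1+\alpha)}\ \big(|D^2v|^{p_0}\big)_{Q_{r/2}(\,t_1' - \tau(\vec j),\,y)}^{1/p_0},
\]
where $\tau(\vec j)\ge 0$, the accumulated backward time shift, is a positive combination of the $j_i-1$ with coefficients strictly smaller than $r^{2/\alpha}$ (and $D^2v$ is extended by $0$ for $t\le 0$, so the sums are effectively finite).

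The main step is to collapse the $(m+1)$-fold sum to the single sum of the statement. Since $B_{r/2}(y)\subset B_r$, each cylinder $Q_{r/2}(s,y)$ is contained in $Q_r(s,0)$, so its normalized average is bounded by a fixed multiple of the average over $Q_r(s,0)$; and writing $s = t_1' - \theta\,r^{2/\alpha}$ with $\ell := \lfloor\theta\rfloor + 1$, $Q_r(s,0)$ lies in $Q_r(t_1'-(\ell-1)r^{2/\alpha},0) \cup Q_r(t_1'-\ell\,r^{2/\alpha},0)$, so that average is controlled by the sum of the two corresponding $r$-grid averages. Substituting this, grouping the multi-index sum by the integer $\ell = \ell(\vec j)$ attached to $\tau(\vec j)$, and applying the convolution-type bound $\sum_{j=1}^{k-1} j^{-(1+\alpha)}(k-j)^{-(1+\alpha)} \le N(\alpha)\,k^{-(1+\alpha)}$ once at each of the $m$ composition steps, one verifies that $\sum_{\vec j:\,\ell(\vec j)=\ell}\prod_i j_i^{-(1+\alpha)} \le N(\alpha,m)\,\ell^{-(1+\alpha)}$, which restores the weight $\ell^{-(1+\alpha)}$ and the cylinders $Q_r(t_1-(\ell-1)r^{2/\alpha},0)$. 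To return from $Q_{\tilde\rho}(t_1',y)$ to the full cylinder $Q_{r/2}(t_1,0)$, I would cover $Q_{r/2}(t_1,0)$ by boundedly many cylinders $Q_{\tilde\rho/10}(t_1',y)$ with $y \in B_{r/2}$ and $t_1' \le t_1$: two points at parabolic distance $< \tilde\rho/10$ share some $Q_{\tilde\rho}(t_1',y)$, while points further apart are handled by the bound $\sup_{Q_{r/2}(t_1,0)}|D^2v| \le \max (|D^2v|)_{Q_{\tilde\rho}(\cdot)} + \tilde\rho^{\sigma}\max [D^2v]_{C^{\sigma\alpha/2,\sigma}(Q_{\tilde\rho}(\cdot))}$, whose right side is already controlled. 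Absorbing the finitely many $(d,\alpha,p_0)$-dependent constants gives the claim.

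I expect the only genuine obstacle to be the combinatorial bookkeeping in the third step: controlling the non-commensurable time shifts coming from the different scales $\rho_i$, and checking that the $m$-fold convolution of the kernels $j \mapsto j^{-(1+\alpha)}$ still decays like $\ell^{-(1+\alpha)}$ with a constant depending only on $\alpha$ and $m$. The remaining ingredients — the finite exponent chain, membership of $v$ in the spaces $\bH^{\alpha,2}_{p_i,0}$ along the way, and the covering/sup-bound patching — are routine once Lemma \ref{lem1} is available.
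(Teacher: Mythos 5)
Your proposal is correct and takes essentially the same route the paper indicates: iterate Lemma \ref{lem1} a finite, $(d,\alpha,p_0)$-dependent number of times to raise the integrability exponent past $d/2+1/\alpha$, then invoke its H\"older conclusion, and collapse the resulting iterated sum using the convolution bound $\sum_{j=1}^{k-1}j^{-(1+\alpha)}(k-j)^{-(1+\alpha)}\le N(\alpha)k^{-(1+\alpha)}$ — precisely the two ingredients the paper names just before the statement. The bookkeeping you flag as the genuine obstacle (non-commensurable time shifts at the halved scales $\rho_i$) does work out: the shift at step $i$ comes in units $\rho_{i-1}^{2/\alpha}=r^{2/\alpha}4^{-i/\alpha}<r^{2/\alpha}$, so for the accumulated shift to land in the $\ell$-th $r^{2/\alpha}$-slab some $j_i$ must exceed $c(m,\alpha)\,\ell$, which together with the summability of $j\mapsto j^{-(1+\alpha)}$ over the remaining indices gives the claimed $\ell^{-(1+\alpha)}$ decay with a constant depending only on $m$ and $\alpha$, hence only on $d$, $\alpha$, $p_0$. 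The final covering/patching step to pass from $Q_{\tilde\rho}$ to $Q_{r/2}(t_1,0)$ is standard, as you note.
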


For solutions to inhomogeneous equations, the following bound is proved by taking the non-local effect of the operator into account. Denote $\cH^{\alpha,1}_{2,0}\left((0,T) \times B_r \right)$ to be the set of functions which can be approximated by a sequence $\{u_n\} \subset C^\infty_0\left([0,T] \times B_r \right)$ with $u_n(0,\cdot) = 0$ in the norm
$$
\|u\|_{\cH^{\alpha,1}_2\left((0,T) \times B_r \right)}
= \|u\|_{L_p\left((0,T) \times B_r \right)} + \|Du\|_{L_p\left((0,T) \times B_r \right)} + \|\partial_t^\alpha u\|_{\bH_p^{-1}\left((0,T) \times B_r \right)},
$$
where
\begin{align*}
&\|\partial_t^\alpha u\|_{\bH_p^{-1}\left((0,T) \times B_r \right)}\\
&:=\inf\{\|g\|_{L_p\left((0,T) \times B_r \right)}+\|f\|_{L_p\left((0,T) \times B_r \right)}\ :\ \partial_t^\alpha u=\Div g+f\ \text{in}\ (0,T) \times B_r\}.
\end{align*}
\begin{proposition}
                    \label{prop5.2}
Let $p_0\in (1,\infty)$, $f \in L_{p_0}\left( (0,t_0) \times B_1 \right)$, and $w\in \cH^{\alpha,1}_{2,0}((0,t_0)\times B_1)$ be a weak solution to
\begin{equation*}
- \partial_t^\alpha w + a^{ij}(t) D_{ij} w= f(t,x)
\end{equation*}
 in $(0, t_0) \times B_1$ with the zero boundary condition on $\partial_p\left((0,t_0) \times B_1\right)$. Then we have
\begin{equation*}
\left( |D^2 w|^{p_0} \right)_{Q_{1/2}(t_1,0)}^{1/p_0} \le \sum_{k=0}^\infty c_k(|f|^{p_0})^{1/p_0}_{(s_{k+1},s_k)\times B_{1}}
\end{equation*}
for any $t_1 \leq t_0$, where $s_k = t_1 - 2^k + 1$ and
\begin{equation*}
\sum_{k=0}^\infty c_k\le N=N(d,\delta,\alpha,p_0).
\end{equation*}
\end{proposition}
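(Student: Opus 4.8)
The plan is to reduce Proposition~\ref{prop5.2} to the interior regularity for \emph{homogeneous} equations recorded in Lemma~\ref{lem1} and Proposition~\ref{prop5.1}, together with the $L_{p_0}$ a~priori estimate and solvability for $-\partial_t^\alpha+a^{ij}(t)D_{ij}$ with zero initial data, which is available for every $p_0\in(1,\infty)$ by \cite{DKim19}. First I would cut the source along the backward dyadic slabs of the statement: with $I_k=(s_{k+1},s_k)$ and $s_k=t_1-2^k+1$, put $f_k=f\,\mathbf 1_{(I_k\cap(0,t_0))\times B_1}$, so $f=\sum_{k\ge0}f_k$ with only finitely many nonzero terms; by causality we may also discard $f$ for $t>t_1$. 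Using the $L_2$-solvability of the problem in $\cH^{\alpha,1}_{2,0}((0,t_0)\times B_1)$ with zero initial and lateral data, write $w=\sum_{k\ge0}w_k$, where $w_k$ solves $-\partial_t^\alpha w_k+a^{ij}(t)D_{ij}w_k=f_k$ with the same zero data. It then suffices to prove, for each $k$, a bound $(|D^2w_k|^{p_0})^{1/p_0}_{Q_{1/2}(t_1,0)}\le c_k\,(|f|^{p_0})^{1/p_0}_{I_k\times B_1}$ with $\sum_{k\ge0}c_k\le N(d,\delta,\alpha,p_0)$, and then sum.

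For the near term $k=0$ the source lives on the unit slab $(t_1-1,t_1)\times B_1$ abutting our cylinder, so I argue purely locally. Fix a spatial cutoff $\eta$ with $\eta\equiv1$ on $B_{1/2}$ and $\operatorname{supp}\eta\subset B_1$; since $\partial_t^\alpha$ acts only in $t$, the zero extension of $\eta w_0$ solves a whole-space equation of the same type with right-hand side $\eta f_0+a^{ij}(w_0 D_{ij}\eta+2D_i w_0 D_j\eta)$. The whole-space $L_{p_0}$-estimate of \cite{DKim19}, together with iterating the cutoff in space to absorb the first-order term and the $\cH^{\alpha,1}_{p_0}$-solvability estimate on the cylinder to absorb $\|w_0\|_{L_{p_0}}$, gives $\|D^2w_0\|_{L_{p_0}}\le N\|f_0\|_{L_{p_0}}$; since $|Q_{1/2}(t_1,0)|$ and $|I_0\times B_1|$ are fixed constants, this is the claim with a fixed $c_0=c_0(d,\delta,\alpha,p_0)$.

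For the tail $k\ge1$ the decisive features are that $w_k\equiv0$ on $(0,s_{k+1})$ and that $f_k$ vanishes on $(s_k,t_0)$, so there $w_k$ solves the \emph{homogeneous} equation and is interior in $B_1$. I would then run the cutoff/bootstrap behind Lemma~\ref{lem1} near $(t_1,0)$---more precisely its localization to the finite interval $(s_k,t_0)$, in which the pre-history at times $\tau<s_k$ (that is, $\tau\in I_k$, since $w_k$ vanishes before $I_k$) enters only through the Caputo kernel $(t-\tau)^{-1-\alpha}$---getting $\|D^2w_k\|_{L_\infty(Q_{1/2}(t_1,0))}\le N\|w_k\|_{L_{p_0}((t_1-1,t_1)\times B_1)}+N\,(\text{memory term})$. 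Both terms on the right carry the decay $2^{-k\alpha}$: since $t-\tau\sim2^k$ for $t\sim t_1$ and $\tau\in I_k$, the memory term is $\lesssim 2^{-k(1+\alpha)}\int_{I_k}\|f(\tau)\|_{L_{p_0}(B_1)}\,d\tau\lesssim 2^{-k\alpha}(|f|^{p_0})^{1/p_0}_{I_k\times B_1}$ after H\"older in $\tau$ and accounting for $|I_k|\sim 2^k$, while $\|w_k(t)\|_{L_{p_0}(B_1)}$ for $t$ near $t_1$ is controlled by the same mechanism together with the spectral gap supplied by the zero lateral Dirichlet data (for $p_0=2$ this is the bound $\|w_k(t)\|_{L_2(B_1)}\lesssim 2^{-k\alpha}(|f|^2)^{1/2}_{I_k\times B_1}$ coming from the Mittag--Leffler/energy decay, and the general case reduces to it via the embeddings of \cite{DKim19}). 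Hence $c_k\le N\,2^{-k\alpha}$ and $\sum_k 2^{-k\alpha}<\infty$; this is precisely the dyadic repackaging of the ``$j^{-(1+\alpha)}$'' weights of Proposition~\ref{prop5.1}, summed over the blocks $2^k\le j<2^{k+1}$.

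The main obstacle is this tail estimate, and inside it the quantitative memory decay of $w_k$ near $t_1$: because the coefficients are merely measurable in $t$ there is no solution semigroup or Mittag--Leffler representation to lean on, so the gain $2^{-k\alpha}$ has to be extracted from fractional energy identities (for $p_0=2$, from $w\,\partial_t^\alpha w\ge\tfrac12\partial_t^\alpha(w^2)+(\text{nonnegative term})$ and Poincar\'e's inequality on $B_1$) combined with the local bootstrap and a careful tracking of the time-gap $2^k$, recalling that for spatial scale fixed at $1$ the natural fractional-parabolic time unit is also of order $1$. Establishing the correct finite-time version of the Lemma~\ref{lem1} cutoff estimate---so that ``turning on'' the source on $I_k$ contributes only a polynomially small history term---and propagating the $L_2$ decay to $L_{p_0}$ for merely-measurable-in-$t$ coefficients are the steps that do not follow from the classical local parabolic machinery, and are exactly where ``the non-local effect of the operator'' must be taken into account.
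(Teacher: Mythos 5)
Your strategy -- decompose $f=\sum_k f_k$ over the backward dyadic slabs, write $w=\sum_k w_k$ by linearity and causality, and estimate the near piece by the direct $L_{p_0}$ solvability while extracting decay in $k$ for the far pieces from the Caputo memory and interior regularity -- is a sensible way to produce a sum with summable coefficients, and you are right that the crux is a finite-time-horizon analogue of Lemma~\ref{lem1} in which the source on $I_k$ is felt near $t_1$ only through a small history term. However, as written the argument has several genuine gaps, most of which you flag yourself, and some of which I don't think resolve the way you suggest.

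First, the memory-term computation silently replaces $w_k$ by $f$. The history contribution at times $t$ near $t_1$ is of the form $\int_{I_k}(t-\tau)^{-1-\alpha}\,w_k(\tau,x)\,d\tau$, not $\int_{I_k}(t-\tau)^{-1-\alpha}\,f(\tau,x)\,d\tau$; to pass to $f$ you need a pointwise-in-time or at least $L_1(I_k;L_{p_0}(B_1))$ stability bound for $w_k$ in terms of $f_k$ on the slab $I_k\times B_1$, and you also need to know that this constant does not deteriorate with the slab length $|I_k|\sim 2^k$. That uniformity in the time horizon is plausible for the nondivergence-form problem with zero Dirichlet data (one can exploit Poincar\'e in space), but it is a separate estimate that has to be established, and as it stands your H\"older-in-$\tau$ bookkeeping is applied to the wrong function. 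Second, the reduction of the general-$p_0$ tail bound to the $p_0=2$ energy estimate ``via the embeddings of \cite{DKim19}'' does not work in either direction. For $p_0<2$ Jensen gives $(|f|^{2})^{1/2}_{I_k\times B_1}\ge (|f|^{p_0})^{1/p_0}_{I_k\times B_1}$, so an $L_2$ energy estimate yields a strictly weaker right-hand side than the statement; for $p_0>2$ one needs a genuine bootstrap (the route of Lemma~\ref{lem1}: repeated cutoff plus fractional Sobolev embedding), not an embedding, and that bootstrap has to be reconstructed on the finite interval $(s_k,t_0)$ with the history term carried along -- exactly the ``finite-time version of Lemma~\ref{lem1}'' you describe but do not prove. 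Third, even the $p_0=2$ decay rate is overstated: the Alikhanov inequality $w\,\partial_t^\alpha w\ge \tfrac12\partial_t^\alpha(w^2)$ and Poincar\'e give $\partial_t^\alpha\|w_k(t)\|_{L_2}^2\le -c\|w_k(t)\|_{L_2}^2$ plus a source; the Mittag--Leffler comparison then controls $\|w_k(t)\|_{L_2}^2$ (not its square root) by a factor $\sim 2^{-k\alpha}$, so $\|w_k(t)\|_{L_2}\lesssim 2^{-k\alpha/2}$. This is still summable, so it does not destroy the scheme, but the constants you write down are not the ones the argument produces.

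Finally, for $k=0$ you cite the whole-space $L_{p_0}$ estimate of \cite{DKim19} plus a spatial-cutoff bootstrap, but the hypothesis only gives $w\in\cH^{\alpha,1}_{2,0}$, i.e.\ $L_2$ control of $Dw$; to even begin the cutoff argument you must first upgrade $w_0$ to a strong solution with $D^2w_0\in L_{p_0}$ on the slab. This regularity upgrade (mollification in space, iteration of the cutoff, passage through intermediate $L_q$'s by fractional Sobolev embedding) is precisely the machinery of Lemma~\ref{lem1} re-purposed for an inhomogeneous equation, and it is not supplied by the $\cH^{\alpha,1}_{p_0}$-solvability estimate you invoke, which is the divergence-form estimate and controls only $Dw_0$. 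In short, the decomposition and the target decay rate $c_k\lesssim 2^{-k\beta}$ are the right shape, but every ingredient that would turn the sketch into a proof -- the finite-horizon interior estimate with explicit history term, the $L_{p_0}$-uniform stability over long slabs, and the $L_2\to L_\infty$ smoothing that survives the nonlocal memory -- is exactly what Proposition~\ref{prop5.2} is asserting, and none of them is actually established in the proposal.
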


Define general cylinders
$$
Q_{r_1, r_2}(t,x) = (t-r_1^{2/\alpha}, t) \times B_{r_2}(x)
$$
and the strong maximal function
$$
\left(\cS \cM f\right) (t,x) = \sup_{Q_{r_1,r_2}(s,y) \ni (t,x)} \fint_{Q_{r_1,r_2}(s,y)} |f(r,z)| \, dz \, dr
$$
for any $f\in L_{1,\text{loc}}$. Clearly, $\cM f\le \cS \cM f$. It is well know that the Hardy--Littlewood theorem holds for the strong maximal function, i.e., for any $p\in (1,\infty)$,
$$
\|\cS \cM  f \|_{L_{p}(\bR^{d+1})} \le N \| f\|_{L_{p}(\bR^{d+1})}.
$$
Moreover, for any $p, q \in (1,\infty)$, $K_1 \in [1, \infty)$, $w_1(t) \in A_p(\bR,dt)$, $w_2(x) \in A_q(\bR^d, dx)$ satisfying
$$
[w_1]_{A_p} \leq K_1,\quad [w_2]_{A_q} \leq K_1,
$$
and $w(t,x) = w_1(t)w_2(x)$, we have for any $f \in L_{p,q,w}(\bR \times \bR^d)$,
\begin{equation}
                                        \label{eq8.54}
\|\cS\cM f\|_{L_{p,q,w}(\bR \times \bR^d)} \leq N \|f\|_{L_{p,q,w}(\bR \times \bR^d)},
\end{equation}
where $N = N(d,p,q,K_1) > 0$. This follows from \cite[Theorem 1.1]{B86} when $p=q$, and the extrapolation theorem of Rubio de Francia \cite{MR745140}.

Now by using a decomposition argument, it follows from Propositions \ref{prop5.1} and \ref{prop5.2} that for any $(t_0,x_0) \in (0,T] \times \bR^d$, $r \in (0,\infty)$, $\mu \in (0,1/4)$,
\begin{equation}
							\label{eq0219_01}
\begin{aligned}
&\left(|D^2u - (D^2u)_{Q_{\mu r}(t_0,x_0)}| \right)_{Q_{\mu r}(t_0,x_0)}
\leq N \mu^{\sigma} (\cS\cM |D^2u|^{p_0})^{\frac 1 {p_0}}(t_0,x_0)
\\
&\qquad\qquad + N \mu^{-\frac 1 {p_0}(d+\frac 2 \alpha)} \omega^{\frac 1 {\nu_2 p_0}}(R) (\cS\cM |D^2u|^{\nu_1 p_0})^{\frac 1 {\nu_1 p_0}}(t_0,x_0)
\\
&\qquad\qquad + N \mu^{-\frac 1 {p_0}(d+\frac 2 \alpha)} \left(\cS\cM |f|^{p_0}\right)^{\frac 1 {p_0}}(t_0,x_0)
\end{aligned}
\end{equation}
provided that $u \in \bH_{p_0}^{\alpha,2}\left((0,T) \times \bR^d\right)$ has compact support in $[0,T] \times B_{R}$ and satisfies
\begin{equation*}
-\partial_t^\alpha u + a^{ij}(t,x) D_{ij} u = f
\end{equation*}
in $(0,T) \times \bR^d$. Similar to \eqref{eq4.41b}, the inequality \eqref{eq0219_01} gives a pointwise sharp function estimate, which together with the weighted mixed-norm Fefferman--Stein theorem (Theorem \ref{thm4.2}) and \eqref{eq8.54} yield the desired estimate.

It is an interesting problem whether the results in \cite{DKim19, DKim20b} still hold in the factional wave case when $\alpha\in (1,2)$. In this range, the best result so far is in \cite{MR3581300}, where the coefficients are assumed to be piecewise continuous in time and uniformly continuous in the spatial variables. Would it be possible relax the condition to VMO coefficients, or even VMO$_x$ coefficients?

\section{Fully nonlinear equations with or without the convexity condition} \label{sec6}

In the last section of the paper, we discuss some recent progress in fully nonlinear elliptic and parabolic equations with discontinuous coefficients. The interior $W^2_p, p>d,$ estimates for a class of fully nonlinear uniformly elliptic equations of the form
$$
F(D^2u,x)=f(x)
$$
were first obtained by Caffarelli in \cite{Caf89}. See also \cite{CC95}. His proof is
geometric and is based on the Aleksandrov--Bakel'man--Pucci a
priori estimate, the Krylov--Safonov Harnack estimate, and
the ``crawling of ink spots'' lemma in \cite{KS80,Sa80} mentioned in the introduction.
Adapting this technique, similar estimates were proved by Wang \cite{Wa92} for
parabolic equations. By exploiting a weak reverse
H\"older's inequality, the result of \cite{Caf89} was sharpened by
Escauriaza in \cite{Es93}, who obtained the interior $W^2_p$-estimate for
the same equations allowing $p>d-\varepsilon$, with a small constant
$\varepsilon$ depending only on the ellipticity constant and $d$.
See also Winter \cite{Wi09} for an extension to the corresponding boundary estimate as well as the $W^2_p$-solvability of the boundary value problem. We also mention that a solvability theorem in the space $W^{1,2}_{p,\text{loc}}(Q)\cap C(\bar
Q)$ can be found in \cite{CKS00} for the boundary-value problem of fully
nonlinear parabolic equations. In these papers, a small oscillation
assumption in the integral sense is imposed on the operators. See, for
instance, \cite[Theorem 1]{Caf89}. However, as pointed out in
\cite[Remark 2.3]{Wi09} and in \cite{Kr10}
(see also \cite[Example 8.3]{CKS00} for a relevant discussion),
this assumption turns out to be equivalent to a small oscillation condition in the $L_\infty$ sense. 
Thus, the results in \cite{Caf89,Wa92,Es93,CKS00,Wi09} mentioned above are in general not
applicable to the operators under the VMO condition, in which local oscillations are measured in the average sense so that huge jumps in the $L_\infty$ norm are allowed.

In \cite{Kr10}, Krylov obtained interior $W^2_p,p>d,$ estimates for
elliptic Bellman's equations 
$$
\sup_{\omega\in \Omega}
[a^{ij} (\omega, x)D_{ij}u(x) + b_i
(\omega, x)D_iu(x)- (c(\omega, x) + \lambda)u(x) + f(\omega, x)] = 0
$$
in the whole space $\bR^d$ with VMO ``coefficients'', i.e.,
$$
\sup_{x\in \bR^d}\fint_{B_r(x)}\sup_{\omega\in\Omega}|a(\omega,y)-(a)_{B_r(x)}(\omega)|\ dy
$$
for a sufficiently small for all $r$ sufficiently small. 
This result was extended in \cite{DKL13} to the parabolic case, where both interior and boundary $W^{1,2}_p$ estimates for a class of fully nonlinear parabolic equations of the form
\begin{equation}
                                                \label{7.29.1}
\partial_t u(t,x) + F(D^{2}u(t,x),t,x)
+G(D^{2}u(t,x),D u(t,x),u(t,x),t,x)=0
\end{equation}
in cylindrical domains were proved and a solvability result for the terminal-boundary value problem was also obtained. Let $\bS$ be the set of symmetric $d\times d$ matrices,
fix some constants $\delta\in(0,1)$, and  $K\in\bR_{+}:=
(0,\infty)$. By $\bS_{\delta}$ we denote the subset
of $\bS$ consisting of matrices whose eigenvalues are
between $\delta$ and $\delta^{-1}$. Let $\cD\subset \bR^d$ be a bounded domain, $T\in (0,\infty)$, and $\cD_T=(0,T)\times \cD$.
The following condition is imposed in \cite{DKL13}.

\begin{assumption}
                        \label{assump6.1}
(i) $F(\sfu'',t,x)$ is convex and positive homogeneous of degree one
with respect to $\sfu''\in\bS$ and for all values of the arguments  and $\xi\in\bR^{d}$,
$$
 \delta|\xi|^2 \leq F(\sfu''+
\xi\xi^{*} ,t,x)
-F(\sfu'',t,x) \leq \delta^{-1}|\xi|^2.
$$

(ii)  $G(\sfu'',\sfu',\sfu,t,x)$, where $\sfu''\in\bS$, $\sfu'\in\bR^{d}$, and $\sfu\in\bR$, is nonincreasing
in $\sfu$ and  for all values of the arguments
$$
|G(\sfu'',\sfu',\sfu,t,x)-
G(\sfu'',\sfv',\sfv,t,x)|\leq K\big(|\sfu'-\sfv'|
+|\sfu-\sfv| \big),
$$
$$
|G(\sfu'',\sfu',\sfu,t,x)|\leq
\chi(|\sfu''|)|\sfu''|+ K(|\sfu'|+|\sfu|)+\bar{G}(t,x),
$$
where $\bar{G}$ and $\chi$ are given functions such that
$\chi:\bar\bR_+\to \bar \bR_+$ is bounded, decreasing, and
$\chi(r)\to0$ as
$r\to\infty$.

(iii) For all values of the arguments and
$\xi\in\bR^{d}$,
$$
\delta|\xi|^2 \leq F(\sfu''+\xi\xi^{*} ,t,x)
+G(\sfu''+\xi\xi^{*} ,\sfu',\sfu,t,x)
$$
$$
-F(\sfu'',t,x) -G(\sfu''  ,\sfu',\sfu ,t,x)\leq \delta^{-1}|\xi|^2 ;
$$

(iv) $F(\sfu'',t,x)+G(\sfu'',\sfu',\sfu,t,x)$ is convex with respect
to $\sfu''\in \bS$ and, for any $\sfu',\sfu,t,x$ at each
point $\sfu''$ at which $G(\sfu'',\sfu',\sfu,t,x)$ is differentiable
with respect to $\sfu''$, we have
\begin{equation*}
|G(\sfu'',\sfu',\sfu,t,x)-\sfu''_{ij}G_{\sfu''_{ij}}(\sfu'',\sfu',\sfu,t,x)|
\leq \hat{G}(t,x)M(|\sfu|)(1+|\sfu'|),
\end{equation*}
where $M(s)$ is a  continuous function on $\bR$
and $\hat{G}\in L_{1,\text{loc}}(\bR^{d+1})$.
\end{assumption}

\begin{remark}
The convexity of operators plays an important role in the regularity
theory of fully nonlinear elliptic and parabolic equations. For
elliptic equations without convexity assumptions, the best result
one can get is that viscosity solutions are in $C^{1+\alpha}$ (see
Trudinger \cite{Tr89}) under the condition that the operators are
sufficient regular (H\"older) with respect to the independent
variables. In fact, N. Nadirashvili and S. Vl\v{a}dut \cite{NV}
found an example which shows that even for elliptic operators
independent of the space variables viscosity solutions may not have
bounded second-order derivatives.
\end{remark}

\begin{remark}
A typical example when it is relatively easy to verify the hypotheses is given by the following Bellman's equation:
\begin{align}
                                                \label{7.14.2}
\partial_t u(t,x) + &\sup_{\omega\in \Omega} [a^{ij}(\omega,t,x)
 D_{ij} u(t,x)+b^{i }(\omega,t,x)D_{i}u(t,x)\notag\\
&-c(\omega,t,x)u(x)    + f(\omega, t, x)]=0,
\end{align}
  where the set $\Omega$ is a separable metric space, $a =(a^{ij} )$,
$b =(b^{i} )$, $c\geq0 $, and $f $
are given functions which are measurable in $(t,x)$ for
 each $\omega\in \Omega$
 and  continuous in
$\omega$ for each $(t,x)$. Introduce
\begin{align*}
&F(\sfu'',t,x)=\sup_{\omega\in \Omega}  a^{ij}(\omega,t,x)\sfu''_{ij},\\
&G(\sfu'',\sfu',\sfu,t,x)\\
&=\sup_{\omega\in \Omega} [a^{ij}(\omega,t,x)
\sfu''_{ij}+b^{i }(\omega,t,x)\sfu'_{i}
-c(\omega,t,x)\sfu    + f(\omega, t, x)]-F(\sfu'',t,x)
\end{align*}
and assume that for any $\omega$ the function $a^{ij}(\omega,t,x)\sfu''_{ij}$
satisfies Assumption \ref{assump6.1} (i) and the function
$b^{i }(\omega,t,x)\sfu'_{i}
-c(\omega,t,x)\sfu    + f(\omega, t, x)$ satisfies Assumption \ref{assump6.1} (ii).  Then
$F$  and $G$ satisfy Assumption \ref{assump6.1} (i)-(iii)  with
$\chi\equiv 0$.
\end{remark}

The following VMO assumption is imposed on the leading term in \eqref{7.29.1}
with a constant $\theta\in (0,1]$ to be specified later.

\begin{assumption}
                                \label{assump1}
There exists $R_0\in(0,1]$ such that for any
$r\in (0, R_0]$, $\tau\in \bR$, and  $z\in \cD$,
one can find a function $\bar{F} (\sfu'' )$ (independent
of $(t,x)$) satisfying condition Assumption \ref{assump6.1} (i) and such that
for any $\sfu''\in\bS$ with $|\sfu''|=1$ we have
\begin{equation}
                                                     \label{7.30.2}
\int_{Q_{r}(\tau,z)}
|
F(\sfu'' ,t,x)-\bar{F}(\sfu'')| \,dx\,dt\leq \theta
r^{ d+2}.
\end{equation}
\end{assumption}
For instance, \eqref{7.30.2} is satisfied by \eqref{7.14.2} if for
any
$r\in(0,R_{0}]$, $t\in\bR$, and $z\in \cD $ one can find
functions $\bar{a}^{ij}(\omega)$ such that the functions
$\bar{a}^{ij}(\omega)\sfu''_{ij}$ satisfy Assumption \ref{assump6.1} (i) and for any
$\sfu''\in\bS$ with $|\sfu''|=1$
$$
\int_{Q_{r}(\tau,z)}
|\sup_{\omega}a^{ij}(\omega,t,x)\sfu''_{ij}
-\sup_{\omega}\bar{a}^{ij}(\omega)\sfu''_{ij}|\,dx\,dt
\leq \theta
r^{ d+2}.
$$

One of the main results in \cite{DKL13} is the following solvability result.
\begin{theorem}[Theorem 1.1 of \cite{DKL13}]
Let $p>d+1$ be a constant,
$T\in\bR_{+}$, and let $\cD$ be a bounded
$C^{1,1}$ domain in
$\bR^d$.
Assume that $\bar G\in L_{p}(\cD_{T})$.
Then there exists a constant $\theta\in (0,1]$ depending
only on $d$, $p$, $\delta$, and the $C^{1,1}$ norm of $\partial \cD$
such that if Assumptions \ref{assump6.1} and \ref{assump1} are satisfied with this
 $\theta$, then the following assertions hold.
For any $g\in W^{1,2}_p(\cD_T)$, there is a unique solution
$u\in W^{1,2}_p(\cD_T)$ to \eqref{7.29.1} such
that $u=g$ on $\partial_p\cD_T$. Moreover, we have
\begin{equation*}
\|u\|_{W^{1,2}_p(\cD_T)}\le N
\| \bar G\|_{L_p(\cD_T)}+N\|g\|_{W^{1,2}_p(\cD_T)}
+ N_{0} ,
\end{equation*}
where $N$ depends only on $d$, $p$, $\delta$, $K$,
$R_{0}$, the  $C^{1,1}$
norm of $\partial \cD$, and $\text{\rm diam}(\cD)$
 and $ N_{0} $ depends only on the same objects,  $T$,  and
$\chi$. In particular, $ N_{0} =0$ if $\chi\equiv0$.
\end{theorem}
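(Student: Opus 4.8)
The plan is to follow the Caffarelli--Krylov scheme for fully nonlinear equations, adapted here to the parabolic terminal-boundary value problem, reducing the solvability assertion to an a priori $W^{1,2}_p$ estimate together with the method of continuity. First I would reduce to the homogeneous boundary condition: setting $v=u-g$ turns \eqref{7.29.1} into an equation of exactly the same type for $v$ with $v=0$ on $\partial_p\cD_T$, the only change being that $\bar G$ is replaced by $\bar G$ plus terms bounded by $\|g\|_{W^{1,2}_p(\cD_T)}$, and Assumption \ref{assump6.1} survives because all of its conditions are stated in terms of increments. Next, since $p>d+1$, the parabolic Aleksandrov--Bakel'man--Pucci--Krylov--Tso estimate (whose right-hand side is measured in the $L_{d+1}$ norm over space-time) and the Krylov--Safonov theory apply, so that $u$, and then $Du$, are controlled a priori in $C^{\alpha/2,\alpha}$; this lets me treat the lower-order term $G$ as a genuine perturbation and concentrate the nonlinear work on $D^2u$.

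The heart of the matter is the $L_p$ bound for $D^2u$. On a small parabolic cylinder $Q_r$ --- interior, or, after flattening the $C^{1,1}$ boundary, a half-cylinder against the flattened lateral face --- I would compare $u$ with the solution $w$ of the frozen homogeneous equation $\partial_t w+\bar F(D^2w)=0$ carrying the same data on $\partial_p Q_r$. Because $\bar F$ is convex and positively homogeneous of degree one (Assumption \ref{assump6.1}(i)), the parabolic Evans--Krylov theorem yields an interior $C^{2,\alpha}$ estimate for $w$, and a matching boundary estimate on the flattened half-cylinder, hence $\big(|D^2w-(D^2w)_{Q_{\mu r}}|\big)_{Q_{\mu r}}\le N\mu^{\alpha}\big(|D^2w|\big)_{Q_r}$. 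Meanwhile $u-w$ solves a uniformly parabolic equation whose right-hand side is controlled by the oscillation of $F$ --- hence by $\theta$ via Assumption \ref{assump1} --- by $\bar G$, and by the already-controlled quantities $Du,u$; here conditions (iii)--(iv), which via Euler's identity let one write $F(D^2u,t,x)+G(\cdots)=a^{ij}(t,x)D_{ij}u$ plus an error bounded by $\hat G$, are what make the comparison close. Combining the two estimates gives a mean-oscillation inequality for $D^2u$ of the type
\begin{align*}
\big(|D^2u-(D^2u)_{Q_{\mu r}}|\big)_{Q_{\mu r}}
&\le N\mu^{\alpha}\big(|D^2u|^{p_0}\big)_{Q_r}^{1/p_0}
+N\mu^{-\kappa}\theta^{1/\nu_2}\big(|D^2u|^{p_0\nu_1}\big)_{Q_r}^{1/(p_0\nu_1)}\\
&\quad+N\mu^{-\kappa}\big(|\bar G|^{p_0}\big)_{Q_r}^{1/p_0}+(\text{lower order}),
\end{align*}
just as in \eqref{eq9.03}. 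Inserting this into the Fefferman--Stein sharp function theorem and the Hardy--Littlewood maximal function theorem (cf. \eqref{eq4.43}), covering $\cD_T$ by interior and boundary cylinders, choosing $p_0,\nu_1$ with $p_0\nu_1<p$ and then $\mu$ and $\theta$ small, I can absorb the $\|D^2u\|_{L_p}$ term and obtain $\|u\|_{W^{1,2}_p(\cD_T)}\le N\|\bar G\|_{L_p(\cD_T)}+N\|g\|_{W^{1,2}_p(\cD_T)}+N_0$, where $N_0$ records the contribution of $\chi$; when $\chi\equiv0$ the term $\chi(|\sfu''|)|\sfu''|$ in Assumption \ref{assump6.1}(ii) disappears, $G$ is genuinely at most linear in $D^2u$ with bounded ingredients, the estimate is homogeneous, and $N_0=0$.

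With the a priori estimate in hand, existence follows by the method of continuity along the family obtained by linearly interpolating $F+G$ with the Laplacian (and scaling $G$ to zero): Assumptions \ref{assump6.1} and \ref{assump1} hold uniformly along this path --- this is precisely what the somewhat technical conditions (iii)--(iv) are designed to withstand --- the a priori bound is uniform, and at the endpoint the terminal-boundary value problem for $\partial_t u+\Delta u=f$ in a $C^{1,1}$ domain is classically $W^{1,2}_p$-solvable; since $G$ is only Lipschitz, one first mollifies $G$ so that the solution map is differentiable and the continuity argument has the usual open-closed structure, then passes to the limit using the uniform estimate and parabolic compactness. Uniqueness is immediate from the comparison principle: the difference of two solutions satisfies a linear uniformly parabolic equation with bounded measurable coefficients and nonpositive zeroth-order coefficient (by the monotonicity of $G$ in $u$), so the ABP estimate forces it to vanish.

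I expect the boundary estimates to be the main obstacle. Everything hinges on having, for the convex homogeneous equation $\partial_t w+\bar F(D^2w)=0$, a boundary estimate in a half-cylinder with flat lateral face strong enough --- of $C^{2,\alpha}$ type, or at least $C^{1,\alpha}$ with the correct parabolic scaling --- to produce the $\mu^{\alpha}$ gain above, and on verifying that flattening a merely $C^{1,1}$ boundary perturbs the operator in a way still compatible with Assumption \ref{assump1} and preserves uniform ellipticity and convexity. Tracking the precise dependence of the additive constant $N_0$, showing it depends only on $\chi$, $T$, and the listed data and in particular vanishes when $\chi\equiv0$, also requires care at the absorption step.
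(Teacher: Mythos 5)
Your overall framework---reducing to an a priori estimate, comparing with a frozen convex homogeneous operator, using parabolic Evans--Krylov, assembling a mean-oscillation inequality and then applying sharp-function machinery, closing by the method of continuity and ABP uniqueness---is the right one, and it is essentially the scheme of \cite{Kr10,DKL13}. But there is a concrete gap at the central step that prevents the argument from closing as written.

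To turn the Evans--Krylov estimate for the comparison function $w$ into your displayed oscillation bound for $D^2u$, you must control the average of $|D^2(u-w)|$ over $Q_r$. By the mean value theorem, $u-w$ solves a \emph{linear} uniformly parabolic equation $\partial_t(u-w)+\bar a^{ij}(t,x)D_{ij}(u-w)=h$ whose coefficients $\bar a^{ij}$ (obtained by integrating $D_{\sfu''}\bar F$ along the segment from $D^2w$ to $D^2u$) are merely bounded and measurable, with no further structure. For such equations there is no $W^2_1$, let alone $W^2_{p_0}$, estimate for the Hessian in terms of the data: the only Hessian bound available is the $W^2_\gamma$ estimate, originally due to Lin and extended to the parabolic case by Krylov in \cite{Kr10}, which controls $\big(\fint_{Q_r}|D^2(u-w)|^\gamma\big)^{1/\gamma}$ only for a possibly tiny $\gamma\in(0,1]$ depending on $d$ and $\delta$. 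Consequently the mean-oscillation inequality you can actually prove measures the oscillation of $D^2u$ in $L_\gamma$, not in $L_1$ or $L_{p_0}$, and the classical Fefferman--Stein sharp function theorem that you invoke (as in \eqref{eq4.43}) does not apply. The proof in \cite{DKL13} therefore uses a \emph{generalized} Fefferman--Stein theorem in which the sharp function is defined via $L_\gamma$ mean oscillations; without that modification the absorption step does not close. A related point: your claim that ``$u-w$ solves a uniformly parabolic equation whose right-hand side is controlled'' is true, but the conclusion you draw from it---an $L_{p_0}$ bound on $D^2(u-w)$---silently assumes a Calder\'on--Zygmund estimate with measurable coefficients that is false in general.

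The rest of your outline (homogenization of boundary data, ABP/Krylov--Safonov for the lower-order control, the flattening and boundary Evans--Krylov estimate as the main technical obstacle, the role of $\chi\equiv 0$ in killing $N_0$, and the continuity path together with mollification of $G$) matches the intended strategy. To repair the gap you should replace the classical Fefferman--Stein step with the $L_\gamma$ version and feed into it the Lin--Krylov $W^2_\varepsilon$ estimate applied to $u-w$ on each cylinder, as done in \cite{Kr10,DKL13}.
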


We remark that in contrast to linear equations, to the best of our knowledge, there is no  $W^{1,2}_p$ estimate for  fully nonlinear equations with ``coefficients'' measurable in the time variable. For example, consider the equation
$$
u_t+F(D^2 u,t)=0,
$$
where $F$ satisfies the ellipticity and convexity conditions with respect to $D^2 u$. When $F$ is a measurable function of $t$, we do not even know whether $D^2 u$ is bounded (or $u\in C^{1,1}$ with respect to $x$).

The key ingredients of the proofs in \cite{Kr10,DKL13} are the Evans--Krylov $C^{2,\alpha}$ estimate applied to homogeneous equations with constant coefficients and a $W^2_\varepsilon$ estimate for elliptic equations with measurable coefficients, which is originally due to Lin \cite{Li86} and extended to the parabolic case in \cite{Kr10}.

\begin{lemma}
Let $r\in(0,\infty)$ and let
$u\in   C(\bar Q_{r})\cap  W^{1,2}_{d+1}(Q_{\rho})$
for any $\rho\in (0,r)$. Then there are constants $\gamma\in(0, 1]$ and $N$,
depending only on
$\delta, d$, such that
for any $L=a^{ij}D_{ij}$, where $a^{ij}$ are symmetric, bounded, measurable, and satisfy the ellipticity condition with ellipticity constant $\delta$, we have
$$
\fint_{Q_{r}} |D^2u|^\gamma \, dx \,  dt
 \leq  Nr^{-2\gamma}\sup_{\partial'Q_{r}}
 |u|^\gamma + N
\left(\fint_{Q_{r}}|\partial_{t}u +Lu|^{d+1} \,
dx\, dt\right)^{\gamma/{(d+1)}},
$$
where $\partial'Q_{r}$ is the union of the top and lateral boundaries of $Q_r$.
\end{lemma}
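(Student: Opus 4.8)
The plan is to transport to the parabolic setting Lin's proof of the elliptic $W^2_\varepsilon$ estimate \cite{Li86}, following the parabolic adaptation in \cite{Kr10}; the heart of the matter is a power-type decay of the distribution function of $|D^2u|$, extracted from a parabolic Aleksandrov--Bakel'man--Pucci maximum principle together with the Krylov--Safonov ``crawling of ink spots'' lemma \cite{KS80,Sa80}. First I would reduce to $r=1$: the rescaling $v(t,x)=r^{-2}u(r^2t,rx)$ satisfies $\partial_tv+\tilde Lv=\tilde f$ with $\tilde L$ having the same ellipticity constant $\delta$, and it sends the three quantities in the asserted inequality precisely to their $r=1$ counterparts, the factor $r^{-2\gamma}$ on the boundary term arising from $v|_{\partial'Q_1}=r^{-2}\,u|_{\partial'Q_r}$. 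Next, mollifying in $x$ and using the interior $W^{1,2}_{d+1}$ hypothesis together with a parabolic Aleksandrov-type theorem, I may assume $u$ is twice parabolically differentiable almost everywhere, so that $D^2u$ makes sense pointwise a.e.

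Set $f=\partial_tu+Lu$. The first ingredient is the parabolic Aleksandrov--Bakel'man--Pucci estimate (see \cite{Kr10}): for any $w$ with $\partial_tw+Lw\le g$ in $Q_1$ one has $\sup_{Q_1}w^-\le\sup_{\partial'Q_1}w^-+N(d,\delta)\|g^+\|_{L_{d+1}(Q_1)}$, proved by bounding the $(d+1)$-dimensional measure of the image of a suitable contact map built from $w$ and $Dw$, whose Jacobian on the contact set is dominated, via the arithmetic--geometric inequality, by a constant multiple of $|\partial_tw+Lw|^{d+1}$. The second ingredient is, for $M>0$, the ``good set'' $G_M\subset Q_1$ consisting of the points at which $u$ can be touched both from above and from below by a function of the form $a+b\cdot x\pm\tfrac M2|x|^2\pm Mt$ --- one-sided in $t$, since $\partial_t+L$ is first order in time --- whose graph stays below (resp.\ above) that of $u$ on all of $Q_1$. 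At a twice-differentiable point of $G_M$ the eigenvalues of $D^2u$ are squeezed between $-NM$ and $NM$, so
\[
\{|D^2u|>NM\}\cap Q_{1/2}\subset\bigl(Q_{1/2}\setminus G_M(u)\bigr)\cup\bigl(Q_{1/2}\setminus G_M(-u)\bigr).
\]

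The core estimate is then the following: after normalizing so that $\sup_{\partial'Q_1}|u|+\|f\|_{L_{d+1}(Q_1)}\le1$, there exist $N,\varepsilon>0$ depending only on $d,\delta$ with $|Q_{1/2}\setminus G_M|\le NM^{-\varepsilon}$ for all $M\ge1$. Here the base case $M\simeq1$ is the parabolic ABP estimate applied at a fixed scale (a touching paraboloid exists outside a set of small measure once $\|f\|_{L_{d+1}}$ is small), while the passage from level $M$ to level $\sim M$ rests on a dyadic decomposition of $Q_1$ into parabolic subcylinders together with the scale invariance of the equation, so that a density bound for the ``bad'' set at one level upgrades to one at the next level with a fixed contraction factor; this is the parabolic crawling-of-ink-spots lemma, and iterating it produces the $M^{-\varepsilon}$ decay. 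Integrating the distribution function, for any $\gamma<\varepsilon$,
\[
\fint_{Q_{1/2}}|D^2u|^\gamma\,dx\,dt=\gamma\int_0^\infty M^{\gamma-1}\,\bigl|\{|D^2u|>M\}\cap Q_{1/2}\bigr|\,dM\le N,
\]
and a routine covering/iteration in the spatial and time variables removes the shrinkage of the cylinder; undoing the normalization and the rescaling gives the stated inequality on $Q_r$ with $N=N(d,\delta)$ and $\gamma=\gamma(d,\delta)$.

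The step I expect to be the main obstacle is the parabolic measure estimate, namely setting up both the parabolic ABP inequality --- with its careful bookkeeping of the contact set and the asymmetric role of $\partial_tu$ --- and the parabolic crawling-of-ink-spots lemma so that neither ``sees'' the degeneracy of $\partial_t+L$ in the time direction; this is precisely why the touching paraboloids must be two-sided in $x$ but one-sided in $t$, and why every covering argument is carried out with parabolic cylinders $Q_\rho$ rather than Euclidean balls. The scaling reduction, the mollification, and the final integration of the distribution function are routine.
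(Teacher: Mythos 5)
The paper does not prove this lemma: it is stated as a quotation of Lin's elliptic $W^{2,\varepsilon}$ estimate \cite{Li86} and its parabolic extension by Krylov \cite{Kr10}, with no argument given. Your outline --- rescale to $r=1$, parabolic ABP via a contact-map area estimate, the measure decay $|Q_{1/2}\setminus G_M|\lesssim M^{-\varepsilon}$ via the Krylov--Safonov covering argument, then integrate the distribution function and remove the cylinder shrinkage --- is a correct sketch of that cited proof, and you have correctly identified the two genuinely parabolic subtleties, namely the one-sided-in-time touching paraboloids and the use of parabolic cylinders $Q_\rho$ throughout the covering.
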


Observe that in the lemma above, we only have the bound of the $L_\gamma$ norm of $D^2 u$, where $\gamma$ could be small, so that the classical Fefferman--Stein theorem is not applicable. In this regard, another tool in the proof is a generalization of the Fefferman--Stein theorem, where the mean oscillation is measured in the $L_\gamma$ sense instead of the $L_1$ sense.

Later the convexity condition in the above theorem was relaxed in \cite{Kr13, DKr13, Kr13b, Kr18b}. In particular, in \cite{DKr13} the authors proved that for any uniformly parabolic fully nonlinear second-order equation
$$
\partial_t u(t,x)+H(u(t, x),D u(t,x),D^{2}u(t, x),t, x)=0
$$
with bounded measurable ``coefficients'' and bounded ``free'' term in  any cylindrical smooth domain  with  smooth boundary data, there is an approximating equation
$$
\partial_t u(t,x)+\max(H[u],P[u]-K)=0,
$$
which has a unique continuous solution with the first derivatives bounded and the second spacial derivatives locally bounded. Here $P[u]=P(D^2 u)$ is a convex elliptic operator independent of $(t,x)$ and the approximating equation is constructed by modifying the original one only for large values of $|u|$, $|Du|$, and $|D^2 u|$. The novelty of the results in \cite{Kr13, DKr13} is that no convexity assumption is imposed on the equation. The main idea is that on the set $\Gamma$, where the second-order derivatives of $u$ are large,  we have
$$
\partial_t u+P[u]=K.
$$ and by the maximum principle the second order derivative on $\Gamma$ are
controlled by their values on the boundary of $\Gamma$, where they are under
control by the definition of $\Gamma$. The implementation of this idea, however,
requires sufficient regularity of solutions. Since this is not
known a priori, the above idea was applied at the level of finite differences, which is quite involved.

The following solvability result for fully nonlinear parabolic equations
\begin{equation}
                                                \label{7.29.1b}
\partial_t u + F(D^{2}u,u,t,x)
+G(D^{2}u,D u,u,t,x)=0
\end{equation}
under a relaxed convexity condition can be found in \cite{Kr18b}. Let $\theta, \hat \theta\in (0,1)$ be constants to be specified.
\begin{assumption}
                                \label{assump6.5}
(i) For $\sfu''\in\bS$, $\sfu'\in\bR^{d}$, $\sfu\in\bR$, and $(t,x)\in \bR^{d+1}$,
$$
|G(\sfu'',\sfu',\sfu,t,x)\le \hat\theta|\sfu''|+K_0(|\sfu'|+|\sfu|)+\bar G(t,x),
$$
where $\bar G\ge 0$.

(ii) The function $F$ is Lipschitz continuous with respect to $\sfu''$
 with Lipschitz constant $K_{F}$ and  $F(0,t,x)\equiv0$.

(iii) There exist $R_0\in(0,1]$  and $\tau_{0}\in[0,\infty)$
such that, if $r\in (0, R_0]$, $z\in \bR^{d+1}$, $Q_r(z)\subset \cD_T$, and $\sfu\in \bR$, then one can find a convex function
$$
\bar{F} (\sfu'' )=
\bar{F}_{z,r,\sfu} (\sfu'' ),
$$
for which we have $\bar{F}(0)=0$ and
$ D_{\sfu'' }\bar{F} \in\bS_{\delta}$ at all points of differentiability
of $\bar{F}$. Moreover, for any $\sfu''\in\bS$ with $|\sfu''|=1$
($|\sfu''|:=  \text{tr } ^{1/2}(\sfu''\sfu'')$),
 we have
\begin{equation*}
\int_{Q_{r}(z)}\sup_{\tau>\tau_{0}}\tau^{-1}
\big|F\big(\tau \sfu'',\sfu, t,x\big)-\bar{F}(\tau \sfu'')\big| \,dx\,dt\leq
\theta \big|Q_{r}(z)\big|.
\end{equation*}

(iv) There exists a continuous increasing function $\omega_F(\tau),\tau\ge 0$, satisfying $\omega_F(0)=0$ and for any $\sfu,\sfv\in \bR$, $(t,x)\in \cD_T$, and $\sfu''\in \bS$, we have
$$
|F(\sfu'',\sfu,t,x)-F(\sfu'',\sfv,t,x)|\le \omega_F(|\sfu-\sfv|)|\sfu''|.
$$
\end{assumption}
Note that in (iii) the ``VMO'' condition on $F(\sfu'',u,t,x)$ with respect to $(t,x)$ is only imposed for $\sfu''$ sufficiently large. The following condition is also used for the solvability of equations. It is worth mentioning that in contrast to the previous results in the literature (see, for example, \cite{CKS00}), no Lipschitz continuity with respect to $\sfu'$ and $\sfu$ is assumed.

\begin{assumption}
                                \label{assump6.6}
For any $(t, x) \in \bR^{d+1}$, the function
$$
H(\sfu'',\sfu',\sfu,t,x):=F(\sfu'', t, x)+G(\sfu'',\sfu',\sfu,t,x)
$$
is continuous with respect to $(\sfu',\sfu)$, and Lipschitz continuous with respect to $\sfu''$, and at all points of differentiability of $H$ with respect to $\sfu''$, we have $D_{\sfu''}H \in\bS_{\delta}$.
\end{assumption}

For $\rho>0$, denote
$$
\cD^\rho=\{x\in \cD\,:\, B_r(x)\in \cD\},\quad
\cD_T^\rho=[0,T-\rho^2)\times \cD^\rho.
$$

\begin{theorem}[Theorem 12.1.10 of \cite{Kr18b}]
Let $p>d+2$ and $\cD$ be a bounded domain in $\bR^d$ which satisfies the exterior ball condition.
There exist constants $\theta,\hat\theta\in (0,1)$, depending only on $d$, $p$, $\delta$, $K_F$ such that if Assumptions \ref{assump6.5} and \ref{assump6.6} are satisfied, and
$\bar G\in L_p(\cD_T)$, then for any $g \in  C(((0,T)\times \partial\cD)\cup (\{T\}\times \bar \cD))$, there exists
$$
u\in \bigcup_{\rho>0}W^{1,2}_p(\cD_T^\rho)\cap C(\bar \cD_T)
$$
satisfying \eqref{7.29.1b} and $u=g$ on $((0,T)\times \partial\cD)\cup (\{T\}\times \bar \cD)$.
\end{theorem}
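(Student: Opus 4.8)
The plan is to prove existence by the approximation scheme of \cite{DKr13, Kr18b}: one replaces the possibly non-convex operator by a family of operators, indexed by a large parameter $K$, that agree with the original one except where $|D^2u|$ is large and that are genuinely convex on that region, solves the approximating problems with the help of the theory for convex equations, establishes a priori $W^{1,2}_p$ bounds on interior subcylinders $\cD_T^\rho$ that are \emph{uniform in} $K$, and finally passes to the limit and attaches the boundary data by means of barriers furnished by the exterior ball condition.

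Concretely, I would fix a convex function $P(\sfu'')$ with $D_{\sfu''}P\in\bS_{\delta}$ and $P(0)=0$, set $H:=F+G$, and for large $K$ consider
$$
\partial_t u+\max\bigl(H(D^2u,Du,u,t,x),\,P(D^2u)-K\bigr)=0
$$
with $u=g$ on the parabolic boundary of a smooth subdomain exhausting $\cD_T$. Where $P(D^2u)-K$ realizes the maximum the equation is uniformly elliptic and convex; where $H$ realizes it, Assumptions \ref{assump6.5} and \ref{assump6.6} force $|D^2u|\le N(K)$. Consequently the approximating operator falls within the scope of the convex solvability theory (in the spirit of \cite{DKL13}), so for each $K$ there is a solution with second spatial derivatives locally bounded. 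The $C^0$ bound follows from the maximum principle using the structure of $H$ in $\sfu$ and $\bar G\in L_p$, and interior $C^{0,1}$ (in fact $C^{1,\alpha}$) bounds follow from Krylov--Safonov-type estimates, all uniformly in $K$.

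The heart of the argument is the interior $W^{1,2}_p$ estimate with $p>d+2$, uniform in $K$, which I would obtain by the mean-oscillation/sharp-function method used throughout this survey. On a small cylinder $Q_r(z)\subset\cD_T$, freeze $u$ in the lower-order slots and compare with the solution $v$ of $\partial_t v+\bar F_{z,r,u}(D^2v)=0$, where $\bar F$ is the convex operator from Assumption \ref{assump6.5}(iii); the Evans--Krylov estimate gives $v\in C^{2,\alpha}$ inside $Q_r(z)$ and hence controls the mean oscillation of $D^2v$. The difference $w=u-v$ satisfies a uniformly elliptic equation whose right-hand side is controlled by $\bar G$, by $\hat\theta\,|D^2u|$ (Assumption \ref{assump6.5}(i)), by the modulus $\omega_F$ (Assumption \ref{assump6.5}(iv)), and by the VMO-type deviation $F-\bar F$ (Assumption \ref{assump6.5}(iii)); applying the $W^2_\varepsilon$ estimate (the Lin-type lemma above) to $w$ yields only an $L_\gamma$ bound for $D^2w$ with a possibly small exponent $\gamma$, so the classical Fefferman--Stein theorem must be replaced by its $L_\gamma$-oscillation version mentioned above. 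Combining these ingredients produces, for any $\mu\in(0,1/4)$ and $\nu_1>1$, a pointwise estimate which schematically reads
$$
\bigl(|D^2u|\bigr)^{\#}_{\gamma}\le N\mu\,\cM^{1/\gamma}\!\bigl(|D^2u|^{\gamma}\bigr)
+N\mu^{-\kappa}\bigl(\hat\theta+\theta^{1/\nu_2}\bigr)\,\cM^{1/(\gamma\nu_1)}\!\bigl(|D^2u|^{\gamma\nu_1}\bigr)
+N\mu^{-\kappa}\cM^{1/\gamma}\!\bigl(|\bar G|^{\gamma}\bigr)+N\mu^{-\kappa},
$$
where $\nu_2=\nu_1/(\nu_1-1)$; choosing $\nu_1$ so that $\gamma\nu_1<p$, then $\mu$ small, and then $\theta,\hat\theta$ small (the smallness asserted in the statement), one absorbs the $|D^2u|$ terms and, after a partition of unity, closes the $W^{1,2}_p(\cD_T^\rho)$ estimate via the Hardy--Littlewood maximal function theorem; the bound on $\partial_t u$ then follows from the equation. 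This absorption step, together with the verification that the approximating operators genuinely inherit the required convexity-type hypotheses and that their solutions are regular enough — which, as in \cite{DKr13}, ultimately has to be carried out at the level of finite differences before differentiating — is where I expect the main difficulty to lie.

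Finally, with uniform a priori bounds in hand, I would let $K\to\infty$. Along a subsequence the approximating solutions converge locally in $W^{1,2}_p(\cD_T^\rho)$ and uniformly on $\bar\cD_T$ to a function $u\in\bigcup_{\rho>0}W^{1,2}_p(\cD_T^\rho)\cap C(\bar\cD_T)$ which solves \eqref{7.29.1b} in the strong sense, because on the region where $|D^2u|$ stays below the growing threshold the approximating equation coincides with the original one. The continuity up to $\partial_p\cD_T$ and the identity $u=g$ there — as well as the uniformity of the convergence on $\bar\cD_T$ — are obtained from standard upper and lower barriers built from the exterior ball condition.
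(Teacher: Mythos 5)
Your overall strategy matches the one the paper sketches for Theorem 12.1.10 of \cite{Kr18b}: approximate by $\partial_t u+\max\bigl(H[u],P[u]-K\bigr)=0$ with $P$ convex, establish $K$-uniform interior $W^{1,2}_p$ bounds by the mean-oscillation method driven by Evans--Krylov for the frozen convex comparison operator $\bar F$ together with the Lin-type $W^2_\varepsilon$ lemma and the $L_\gamma$ generalization of the Fefferman--Stein theorem, and then pass to the limit $K\to\infty$ and settle the boundary behaviour with barriers from the exterior ball condition. Your schematic pointwise oscillation estimate and the absorption of the $\hat\theta$ and $\theta^{1/\nu_2}$ terms by smallness are also the right picture.

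There is, however, a concrete misstep in the sentence ``Where $H$ realizes the max, Assumptions \ref{assump6.5} and \ref{assump6.6} force $|D^2u|\le N(K)$; consequently the approximating operator falls within the scope of the convex solvability theory.'' Both $H$ and $P$ are Lipschitz in $\sfu''$ with derivative in $\bS_\delta$, so the inequality $H\ge P-K$ defining the $H$-region does not by itself confine $\sfu''$: if $D_{\sfu''}H$ and $D_{\sfu''}P$ line up, the set $\{H\ge P-K\}$ is unbounded in $\sfu''$. Nor is $\max(H,P-K)$ convex, so the approximating operator does \emph{not} sit inside the convex-operator $W^{1,2}_p$ theory of \cite{DKL13}. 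The actual mechanism, as the paper emphasizes, is the reverse of what you wrote: one first arranges the modification so that on the set $\Gamma$ where $|D^2u|$ is large the equation becomes $\partial_t u+P(D^2u)=K$; then, because $P$ is convex and translation invariant, second-order difference quotients of $u$ are subsolutions on $\Gamma$ and the maximum principle controls them there by their values on $\partial\Gamma$, where they are already bounded by the very definition of $\Gamma$. Since this circular-looking bootstrap requires regularity not known a priori, \cite{DKr13} runs the whole argument for finite-difference approximations and only then passes to the limit. You do flag this finite-difference implementation as the main difficulty at the end, which is exactly right; but the shortcut you gave in the middle — bounded Hessian on the $H$-region plus convex theory — does not hold up and should be replaced by the $\Gamma$-maximum-principle argument.

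A smaller point worth recording: once the modification is designed so that the operator agrees with the constant-coefficient $P-K$ where $|D^2u|$ is large, the combined operator automatically inherits the small-oscillation hypothesis of Assumption \ref{assump6.5}(iii) with constants uniform in $K$, because the $P-K$ piece contributes zero oscillation; this is what makes your sharp-function estimate, and hence the interior $W^{1,2}_p$ bound, genuinely $K$-uniform, and it is worth stating explicitly rather than leaving it implicit in ``the approximating operators inherit the required hypotheses.''
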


The exterior ball condition in the theorem above can be replaced with the weaker exterior measure condition:
$$
|B_r(x)\setminus \cD|\ge \delta |B_r(x)|
$$
for any sufficiently small $r>0$ and $x\in \partial\cD$. We refer the reader to Remark 12.1.12 of \cite{Kr18b} for a discussion about a similar solvability result in \cite{CKS00}, where viscosity solutions are considered and $H$ is assumed to be convex (or concave) in $\sfu''$ and continuous in $(t, x)$.

In \cite{DKr18}, the authors proved weighted and mixed-norm Sobolev estimates for fully nonlinear elliptic and parabolic equations in the whole space, under a relaxed convexity condition with almost VMO dependence on space-time variables similar to Assumption \ref{assump6.5}.  As a typical example, weights which are powers of the distance to the boundary were considered.
The corresponding interior and boundary estimates are also obtained. For the proof, the authors established the following local version of the Fefferman-Stein sharp functions theorem. Recall the notation in Section \ref{sec4}.
For $m\in \bZ$ and $\gamma\in(0,1]$, introduce
$$
u^{\#}_{\gamma,m}(x)=\sup_{n\geq m}
\sup_{\substack{Q^\alpha_n\in \bC_{n},\\
Q^\alpha_n\ni x}} \Big(\fint_{Q^\alpha_n}
 \fint_{Q^\alpha_n}|u(z)-u(y)|^{\gamma}\mu(dz)\mu(dy)\Big)^{1/\gamma},
$$
$$
\cM_{m}v=\sup_{n\leq m}v_{\mid n}.
$$
For $ \beta\in(0,1]$,  we say that $w$ is of $\beta$-type (the $A_\infty$ condition) if
$$
\frac{w(A)}{w(C)}\leq N_{w,\beta} \frac{|A|^{\beta}}{|C|^{\beta}}
$$
for any measurable $A\subset C$ and $C\in \bC_{\infty}:=\cup_n C_n$,
where $N_{w,\beta}$ is a (finite) constant independent of $C$ and $A$.
\begin{theorem}[Corollary 2.10 of \cite{DKr18}]
Let $p\in (1,\infty)$ and $m\in \bZ$. Assume that $|u|_{\mid n}\to 0$
as $n\to-\infty$, and let $w$ be of $\beta$-type.  Then for any
$p>\gamma\beta$,
$$
\int_{\Omega}|u|^{p}w\,\mu(dx)\leq NI^{(p-\gamma\beta)/p}
J^{\gamma\beta/p},
$$
where
\begin{align*}
I=\int_{\Omega}|\cM u|^{p}w\,\mu(dx),\quad
J=\int_{\Omega}\big(  u^{\#}_{\gamma,m}
+\cM^{
1/\gamma}_{m}(|u|^{\gamma})\big)^{p}w\,\mu(dx),
\end{align*}
and the constant $N$ depends only on
$K_1$, $K_2$, $N_{w,\beta}$, $p$, $\beta$, and $\gamma$.
\end{theorem}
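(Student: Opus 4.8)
The plan is to prove this by the classical dyadic good-$\lambda$ (Calder\'on--Zygmund) method, adapted to two features of the statement: the mean oscillation is taken in the $L_{\gamma}$ sense, and one must convert Lebesgue-type level-set bounds into $w$-weighted ones, which is precisely what the $\beta$-type ($A_{\infty}$) condition provides. We may assume $I,J<\infty$, since otherwise the right-hand side is infinite. Put $g:=u^{\#}_{\gamma,m}+\cM^{1/\gamma}_{m}(|u|^{\gamma})$, let $\cM u=\sup_{n}|u|_{\mid n}$ be the dyadic maximal function (the one occurring in $I$), and let $N_{1}$ be the constant from \eqref{eq4.57}. First I would fix a height $\lambda>0$ and run the stopping-time decomposition of $\{\cM u>\lambda\}$: for each $x$ with $\cM u(x)>\lambda$, the hypothesis $|u|_{\mid n}(x)\to0$ as $n\to-\infty$ yields a dyadic cube of smallest generation containing $x$ whose average of $|u|$ exceeds $\lambda$; the resulting maximal cubes $\{Q_{j}\}$ are pairwise disjoint, their union equals $\{\cM u>\lambda\}$ up to a $\mu$-null set, and comparing each $Q_{j}$ with its parent (whose average is $\le\lambda$ by maximality) through \eqref{eq4.57} gives $\fint_{Q_{j}}|u|\,\mu\le N_{1}\lambda$. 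It is only here that $|u|_{\mid n}\to0$ is used, playing the role that the infinite measure of the space plays in the unweighted Fefferman--Stein theorem.

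Next I would establish, on each stopping cube $Q_{j}$ of generation $n$, the level-set bound
\[
\mu\{x\in Q_{j}:|u(x)|>\kappa\lambda\}\le N\lambda^{-\gamma}\,\mu(Q_{j})\,\big(\inf_{Q_{j}}g\big)^{\gamma}
\]
for a suitable $\kappa=\kappa(K_{1},K_{2},\gamma)>1$. From $\fint_{Q_{j}}|u|\,\mu\le N_{1}\lambda$ and concavity of $t\mapsto t^{\gamma}$ one has $\fint_{Q_{j}}|u|^{\gamma}\,\mu\le(N_{1}\lambda)^{\gamma}$. If $n<m$, then $\fint_{Q_{j}}|u|^{\gamma}\,\mu=(|u|^{\gamma})_{\mid n}(x)\le\cM_{m}(|u|^{\gamma})(x)$ for all $x\in Q_{j}$, so the bound follows from Chebyshev's inequality with any $\kappa\ge1$. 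If $n\ge m$, then $\fint_{Q_{j}}\fint_{Q_{j}}|u(z)-u(y)|^{\gamma}\,\mu(dz)\,\mu(dy)\le(\inf_{Q_{j}}u^{\#}_{\gamma,m})^{\gamma}$, while Chebyshev applied to $\fint_{Q_{j}}|u|^{\gamma}\,\mu\le(N_{1}\lambda)^{\gamma}$ shows that $\mu\{y\in Q_{j}:|u(y)|\le2^{1/\gamma}N_{1}\lambda\}\ge\tfrac12\mu(Q_{j})$; for such $y$ and any $x$ with $|u(x)|>\kappa\lambda$ and $\kappa>2^{1+1/\gamma}N_{1}$ one has $|u(x)-u(y)|>\tfrac{\kappa}{2}\lambda$, and a Chebyshev inequality on the product $Q_{j}\times Q_{j}$ gives $\mu\{x\in Q_{j}:|u(x)|>\kappa\lambda\}\le N\lambda^{-\gamma}\mu(Q_{j})(\inf_{Q_{j}}u^{\#}_{\gamma,m})^{\gamma}$; the claim follows since $g\ge u^{\#}_{\gamma,m}$. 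The point, and the one place the argument is not purely formal, is that $L_{\gamma}$-smallness of the oscillation does not control the $L_{1}$-oscillation, so one cannot center at $(u)_{Q_{j}}$; instead one uses the smallness of the $L_{\gamma}$-average of $|u|$ on $Q_{j}$ to find a large set of points where $|u|$ is already small, and propagates smallness off those points via the $L_{\gamma}$-oscillation together with $|a+b|^{\gamma}\le|a|^{\gamma}+|b|^{\gamma}$.

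To finish, I would apply the $\beta$-type condition with the cube $Q_{j}$ and the set $A=\{x\in Q_{j}:|u(x)|>\kappa\lambda\}$, and use $w(Q_{j})(\inf_{Q_{j}}g)^{\gamma\beta}\le\int_{Q_{j}}g^{\gamma\beta}w\,\mu$, to obtain $w\{x\in Q_{j}:|u(x)|>\kappa\lambda\}\le N\lambda^{-\gamma\beta}\int_{Q_{j}}g^{\gamma\beta}w\,\mu$. Summing over the disjoint $\{Q_{j}\}$ and using $\{|u|>\kappa\lambda\}\subset\{\cM u>\lambda\}=\bigcup_{j}Q_{j}$ (valid by the Lebesgue differentiation theorem since $\kappa>1$) gives $w\{|u|>\kappa\lambda\}\le N\lambda^{-\gamma\beta}\int_{\{\cM u>\lambda\}}g^{\gamma\beta}w\,\mu$. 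The layer-cake formula, the substitution $\lambda\mapsto\kappa\lambda$, Tonelli's theorem, and $\int_{0}^{R}\lambda^{p-1-\gamma\beta}\,d\lambda=R^{p-\gamma\beta}/(p-\gamma\beta)$ (finite exactly because $p>\gamma\beta$) then yield
\[
\int_{\Omega}|u|^{p}w\,\mu=p\kappa^{p}\int_{0}^{\infty}\lambda^{p-1}w\{|u|>\kappa\lambda\}\,d\lambda\le N\int_{\Omega}g^{\gamma\beta}(\cM u)^{p-\gamma\beta}w\,\mu,
\]
and H\"older's inequality with exponents $p/(\gamma\beta)$ and $p/(p-\gamma\beta)$ relative to the measure $w\,\mu$ closes the proof:
\[
\int_{\Omega}|u|^{p}w\,\mu\le N\Big(\int_{\Omega}g^{p}w\,\mu\Big)^{\gamma\beta/p}\Big(\int_{\Omega}(\cM u)^{p}w\,\mu\Big)^{(p-\gamma\beta)/p}=N\,J^{\gamma\beta/p}\,I^{(p-\gamma\beta)/p}.
\]

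I do not expect any single step to be deep; the work is in the bookkeeping. The two scales around generation $m$ must be kept apart --- coarse stopping cubes $(n<m)$ handled by $\cM_{m}(|u|^{\gamma})$, fine ones $(n\ge m)$ by $u^{\#}_{\gamma,m}$ --- and one has to track how the exponent $\gamma\beta$ is produced by composing a Chebyshev inequality in $L_{\gamma}$ with the $\beta$-type inequality, so that $p>\gamma\beta$ emerges as exactly the condition making the $\lambda$-integral converge, and check that the constants depend only on $K_{1},K_{2},N_{w,\beta},p,\beta,\gamma$. The one spot that genuinely needs care is the per-cube level-set estimate on fine cubes, because, as noted, $L_{\gamma}$ mean oscillation does not dominate $L_{1}$ mean oscillation; centering at $(u)_{Q_{j}}$ is therefore unavailable, and one must substitute the smallness of the $L_{\gamma}$-average of $|u|$ on the stopping cube plus a product-measure Chebyshev argument.
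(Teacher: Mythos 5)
Your proposal is correct and follows what is essentially the paper's route: a Calder\'on--Zygmund stopping-time decomposition of $\{\cM u>\lambda\}$ into maximal dyadic cubes (made possible by $|u|_{\mid n}\to 0$ as $n\to-\infty$), a per-cube level-set estimate that splits according to whether the stopping cube's generation $n$ is below or at/above $m$ (using $\cM_m(|u|^\gamma)$ in the coarse case and $u^{\#}_{\gamma,m}$ in the fine case), conversion to $w$-measure via the $\beta$-type condition, and then layer-cake plus H\"older to produce the exponents $\gamma\beta/p$ and $(p-\gamma\beta)/p$. You correctly identify the one genuinely non-formal point — that the $L_\gamma$ oscillation does not dominate the $L_1$ oscillation, so one cannot center at an average, and instead must exploit the product form $\fint\fint|u(z)-u(y)|^\gamma$ together with the fact that $\fint_{Q_j}|u|^\gamma\le(N_1\lambda)^\gamma$ gives a set of measure $\ge\tfrac12\mu(Q_j)$ on which $|u|$ is small; the double Chebyshev on $Q_j\times Q_j$ then closes the estimate. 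All the constant dependencies ($N_1$ from $K_1,K_2$; $\kappa$ from $N_1,\gamma$; $N_{w,\beta}$; and $p,\gamma\beta$) come out as stated, and $p>\gamma\beta$ appears exactly where it must, in the convergence of $\int_0^{\cM u}\lambda^{p-1-\gamma\beta}\,d\lambda$.
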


This theorem allows them to derive estimates without relying on a partition of unity argument, which is not applicable to general fully nonlinear operators.

For further results in this direction, we refer the reader to \cite{Kr17, Kr18, PT16, Huang19} and the recent monograph \cite{Kr18b} on Sobolev and viscosity solutions for fully nonlinear equations. 

\section*{Acknowledgments}
The author is grateful to Nicolai Krylov and Doyoon Kim for their helpful suggestions and comments on an earlier version of the paper.

\end{document}